\numberwithin{equation}{section}
\theoremstyle{remark}
\newtheorem*{acknowledgement*}{\protect\acknowledgementname}
\theoremstyle{plain}
\newtheorem{thm}{\protect\theoremname}[section]
\theoremstyle{plain}
\newtheorem{lem}[thm]{\protect\lemmaname}
\theoremstyle{plain}
\newtheorem{prop}[thm]{\protect\propositionname}
\theoremstyle{remark}
\newtheorem{rem}[thm]{\protect\remarkname}
\providecommand{\acknowledgementname}{Acknowledgement}
\providecommand{\lemmaname}{Lemma}
\providecommand{\propositionname}{Proposition}
\providecommand{\remarkname}{Remark}
\providecommand{\theoremname}{Theorem}
\date{}
\begin{document}
\title{Gelfand hypergeometric function as a solution to the 2-dimensional
Toda-Hirota equation}
\author{Hironobu Kimura,\\
 Department of Mathematics, Graduate School of Science and\\
  Technology, Kumamoto University}
\maketitle

\begin{abstract}
We construct solutions of the $2$-dimensional Toda-Hirota equation (2dTHE)
expressed by the solutions of the system of so-called Euler-Poisson-Darboux equations
(EPD) in $N$ complex variables. The system of EPD arises naturally from the differential equations
which form a main body of the system characterizing the Gelfand hypergeometric
function (Gelfand HGF) on the Grassmannian $\mathrm{GM}(2,N)$. Using this
link and the contiguity relations for the Gelfand HGF, which are constructed
from root vectors for the root $\epsilon_{i}-\epsilon_{j}$ for $\mathfrak{gl}(N)$, we
show that the Gelfand HGF gives solutions of the  2dTHE. 
\end{abstract}

\section{Introduction }

The purpose of this paper is to make clear how the Gelfand hypergeometric
function (Gelfand HGF) on the complex Grassmannian manifold is regarded
as a solution of the 2-dimensional Toda-Hirota equation (2dTHE):
\begin{equation}
\partial_{x}\partial_{y}\log\tau_{n}=\frac{\tau_{n+1}\tau_{n-1}}{\tau_{n}^{2}},\quad n\in\mathbb{Z},\label{eq:intro-1}
\end{equation}
which is a bilinear form of the 2-dimensional Toda equation and is
an extension of 
\begin{equation}
\frac{d^{2}}{dx^{2}}\log\tau_{n}=\frac{\tau_{n+1}\tau_{n-1}}{\tau_{n}^{2}},\quad n\in\mathbb{Z}.\label{eq:intro-2}
\end{equation}
The equation (\ref{eq:intro-1}) or (\ref{eq:intro-2}) is one of
the best known nonlinear integrable systems and its structure of the
solutions are studied from various points of view \cite{Darboux,Hirota,kame-1,Nakamura-A,Okamoto-2,Olshanetsky,Satuma}.

These equations admit various type of solutions, rational solutions,
soliton solutions, for example. We are interested in the solutions
 related to the special functions, for example the Gauss hypergeometric
function (HGF)
\[
\,_{2}F_{1}(a,b,c;x)=\sum_{k=1}^{\infty}\frac{(a)_{k}(b)_{k}}{(c)_{k}k!}x^{k}=C\int_{0}^{1}u^{a-1}(1-u)^{c-a-1}(1-xu)^{-b}du,
\]
and its confluent family: Kummer's confluent HGF, Bessel function,
Hermite-Weber function, and HGFs of several variables, where $(a)_{k}=\Gamma(a+k)/\Gamma(a)$
and $C=\Gamma(c)/\Gamma(a)\Gamma(c-a)$, see \cite{Appell-2,Erdelyi,IKSY}. There
are works on this subject \cite{kame-1,Okamoto-1,Satuma}. In \cite{Okamoto-1},
Okamoto constructed the solutions of (\ref{eq:intro-2}) expressed
in terms of the above Gauss HGF family using the contiguity relations
for them. Moreover, he obtained in \cite{Okamoto-2} the solution of
(\ref{eq:intro-1}) expressed by Appell's HGFs of two variables:
\begin{align*}
F_{1}(x,y) & =C_{1}\int_{0}^{1}u^{\alpha-1}(1-u)^{\gamma-\alpha-1}(1-xu)^{-\beta}(1-yu)^{-\beta'}du,\\
F_{2}(x,y) & =C_{2}\iint u^{\beta-1}v^{\beta'-1}(1-u)^{\gamma-\beta-1}(1-u)^{\gamma'-\beta'-1}(1-xu-yv)^{-\alpha}dudv.
\end{align*}
Similar results are also obtained by Kametaka \cite{kame-1} including
the solutions expressed in terms of confluent type functions of $F_{1}$
and $F_{2}$. Their method is based on the work of G. Darboux \cite{Darboux}
who discussed the mechanism of producing new surfaces in the Euclidean
space $\mathbb{R}^{n}$ successively. The key idea is to consider a simple
hyperbolic operator 
\[
M=\partial_{x}\partial_{y}+a(x,y)\partial_{x}+b(x,y)\partial_{y}+c(x,y),
\]
and to discuss the decomposability of $M$ into the 1st order differential
operators. Write $M$ in the form $M=(\partial_{x}+b)(\partial_{y}+a)-h$, where
$h=h(x,y)$ measures the decomposability and is called the invariant
of $M$. If $h\neq0$, one can construct an operator $M_{+}$ of the
same type by considering the change of unknown $u\mapsto u_{+}=(\partial_{y}+a)u$
for $Mu=0$. Apply the same process to the new operator $M_{+}$ and
so on. Then, starting from $M_{0}=M$, one obtains the sequence of
hyperbolic operators $\{M_{n}\}_{n\geq0}$ with the invariants $h_{n}$,
which is called the Laplace sequence. Surprisingly the invariants
$h_{n}$ satisfies the equation
\[
\partial_{x}\partial_{y}\log h_{n}=-h_{n+1}+2h_{n}-h_{n-1},
\]
which is connected to the 2dTHE by $\partial_{x}\partial_{y}\log\tau_{n}=-h_{n-1}$.
Special type of hyperbolic operator gives a particular solution of the 
2dTHE. Starting from the hyperbolic operator 
\[
M=\partial_{x}\partial_{y}+\frac{\beta'}{x-y}\partial_{x}+\frac{\beta}{y-x}\partial_{y},
\]
called the Euler-Poisson-Darboux operator (EPD operator), one obtains
a simple solution $\{t_{n}\}$ of the 2dTHE from the sequence $\{M_{n}\}_{n\geq0}$.
Then together with the appropriately chosen solution $u_{n}$ of $M_{n}u=0$,
which can be expressed explicitly in terms of $F_{1},$ they obtained
a solution of the 2dTHE in the form $\tau_{n}=t_{n}u_{n}$. The process
$t_{n}\to\tau_{n}$, which gives a new solution $\tau_{n}$ from the
old $t_{n}$, is called the B\"acklund transformation.

The Gelfand HGF on the complex Grassmannian manifold $\mathrm{GM}(r,N)$
is a natural generalization of the HGFs appeared above, and it enables
a unified approach to understand various aspects of classical HGFs
\cite{Gelfand,Kimura-Haraoka,Kimura-H-T}. The Gelfand HGF is defined
as a Radon transform of a character of the maximal abelian subgroup
$H_{\lambda}\subset\mathrm{GL}(N)$, which is specified by a partition $\lambda=(n_{1},\dots,n_{\ell})$
of $N$. When $\lambda=(1,\dots,1)$, $H=H_{\lambda}$ is a Cartan subgroup
and the HGF is said to be of non-confluent type. For example, the
Gauss HGF and its confluent family: Kummer, Bessel, Hermite-Weber,
are understood as the Gelfand HGF on $\mathrm{GM}(2,4)$ corresponding to
the partitions $(1,1,1,1),(2,1,1),(2,2)$ and $(3,1)$, respectively
\cite{Kimura-Haraoka}. Also we see that Appell's $F_{1}$ and $F_{2}$
are the Gelfand HGFs of non-confluent type on $\mathrm{GM}(2,5)$ and on
a certain codimension $2$ stratum of $\mathrm{GM}(3,6)$ \cite{Gelfand}. 

In this paper we consider the Gelfand HGF on $\mathrm{GM}(2,N)$ of non-confluent
type, which is defined on the Zariski open subset of $\mathrm{Mat}(2,N)$:
\[
Z=\{z=(z_{1},\dots,z_{N})\in\mathrm{Mat}(2,N)\mid\det(z_{i},z_{j})\neq0\text{ for any }i\neq j\}
\]
and is given by the 1-dimensional integral
\[
F(z;\alpha)=\int_{C(z)}\prod_{1\leq j\leq N}(z_{1,j}+z_{2,j}u)^{\alpha_{j}}du.
\]
The main result, Theorem \ref{thm:main}, asserts the following. Denote
by $\Phi(x;\alpha)$ the restriction of $F(z;\alpha)$ to the subspace of
$Z$:
\[
X=\left\{ \mathbf{x}=\left(\begin{array}{ccc}
x_{1} & \dots & x_{N}\\
1 & \dots & 1
\end{array}\right)\mid x_{a}\neq x_{b}\;\text{for }\forall a\neq b\right\} \subset Z.
\]
For any $1\leq i\neq j\le N$, put
\[
\tau_{n}(x)=\frac{\Gamma(\alpha_{j}+1)}{\Gamma(\alpha_{j}-n+1)}B(\alpha_{i},\alpha_{j};n)\cdot(x_{i}-x_{j})^{(\alpha_{i}+n)(\alpha_{j}-n)}\Phi(x;\alpha+n(e_{i}-e_{j})),
\]
where $\alpha+n(e_{i}-e_{j})=(\alpha_{1},\dots,\alpha_{i}+n,\dots,\alpha_{j}-n,\dots,\alpha_{N})$,
and  $B(\alpha_{i},\alpha_{j};n)=A^{n}\prod_{k=0}^{n-1}\left(\prod_{l=1}^{k}(\alpha_{i}+l)(\alpha_{j}-l+1)\right) $ in the case $n\geq0$. Then $\tau_{n}$ gives a solution to the 2dTHE
\[
\partial_{i}\partial_{j}\log\tau_{n}=\frac{\tau_{n+1}\tau_{n-1}}{\tau_{n}^{2}},\quad n\in\mathbb{Z},
\]
where $\partial_{i}=\partial/\partial x_{i}$. In constructing the solution, the
contiguity relations for $F(z;\alpha)$ plays an essential role. As for
the contiguity of the Gelfand HGF, see \cite{Horikawa,Kimura-H-T,sasaki}.

We expect that the Gelfand HGF on $\mathrm{GM}(r,N)$ for various partitions
$\lambda$ of $N$, except for $\lambda=(N)$, gives a solution of the 2dTHE.
This problem is discussed in another paper.

This paper is organized as follows. In Section 2, we recall the facts
on the relation between the Laplace sequence of hyperbolic operators
and the 2-dimensional Toda equation satisfied by the invariants. The
link to 2dTHE is also discussed. For the EPD operator, we compute
the invariants of the operators of Laplace sequence and determine
the explicit form of the particular solution to the 2dTHE. In Section
3, we give the definition of the Gelfand HGF of non-confluent type
on $\mathrm{GM}(2,N)$ and discuss its covariance with respect to the group
action $\mathrm{GL}(2)\curvearrowright Z\curvearrowleft H$. We also give the
system of differential equations characterizing $F(z;\alpha)$, which
will be called the Gelfand hypergeometric system (HGS). We show that
the system of EPD equations is obtained as a result of reduction of
the Gelfand HGS (Proposition \ref{prop:red-EPD}) and the contiguity
operators for the EPD system are obtained from those for the Gelfand
HGF. In Section 4, after studying the contiguity structure of the
system of EPDs, we combine them with the result in Section 2 to give
Theorem \ref{thm:main}, the main theorem of this paper.
\begin{acknowledgement*}
I thank Professor Kazuo Okamoto for valuable comments. I would like
to dedicate this article to late Professor Masatoshi Noumi, who is
my friend and gives me advices constantly. This work was supported
by JSPS KAKENHI Grant Number JP19K03521.
\end{acknowledgement*}

\section{Laplace sequence and Toda lattice}

\subsection{\label{subsec:Laplace-seq}Generality of Laplace sequence}

Let $x,y$ be the complex coordinates of $\mathbb{C}^{2}$, $\Omega\subset\mathbb{C}^{2}$
be a simply connected domain, and $\mathcal{O}(\Omega)$ be the set of holomorphic
functions on $\Omega$. We consider the following hyperbolic differential
equation:
\begin{equation}
Mu=\left(\partial_{x}\partial_{y}+a(x,y)\partial_{x}+b(x,y)\partial_{y}+c(x,y)\right)u=0,\label{eq:laplace-1}
\end{equation}
where $\partial_{x}=\partial/\partial x,\partial_{y}=\partial/\partial y$ and $a,b,c\in \mathcal{O}(\Omega)$.
Write the operator $M$ in the form
\begin{equation}
M=(\partial_{x}+b)(\partial_{y}+a)-h,\label{eq:laplace-1-1}
\end{equation}
 or 
\begin{equation}
M=(\partial_{y}+a)(\partial_{x}+b)-k,\label{eq:laplace-1-2}
\end{equation}
where
\[
h=a_{x}+ab-c,\quad k=b_{y}+ab-c
\]
with $a_{x}=\partial a/\partial x,b_{y}=\partial b/\partial y$. Then functions $h=h(x,y),k=k(x,y)$
are called the \emph{invariants} of the operator $M$. The meaning
of ``invariant'' comes from the following fact, which is easily
shown by direct computation.
\begin{lem}
\label{lem:normal-form} For the operator $M$ above and  a function
$f\in\mathcal{O}(\Omega)$ such that $1/f\in\mathcal{O}(\Omega)$, define the operator $M'$
by
\[
M'=f^{-1}\cdot M\cdot f=\partial_{x}\partial_{y}+a'\partial_{x}+b'\partial_{y}+c'.
\]
Then $M'$ is given in terms of $F=\log f$ by
\begin{align}
a' & =a+F_{y},\nonumber \\
b' & =b+F_{x},\label{eq:laplace-1-4}\\
c' & =c+aF_{x}+bF_{y}+F_{x,y}+F_{x}F_{y}.\nonumber 
\end{align}
The invariants of $M$ coincide with those of $M'$. 
\end{lem}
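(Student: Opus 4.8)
The plan is to conjugate $M$ by $f$ explicitly on a test function $u\in\mathcal{O}(\Omega)$, read off $a',b',c'$, and then substitute into the definitions of the invariants and watch the ``gauge'' terms cancel. First I would record the elementary identities $\partial_{x}f=f\,F_{x}$ and $\partial_{y}f=f\,F_{y}$ coming from $F=\log f$, so that $f^{-1}\partial_{x}(fu)=u_{x}+F_{x}u$ and likewise $f^{-1}\partial_{y}(fu)=u_{y}+F_{y}u$. Applying $\partial_{y}$ to $f(u_{x}+F_{x}u)$ and dividing by $f$ gives
\[
f^{-1}\partial_{x}\partial_{y}(fu)=u_{x,y}+F_{x}u_{y}+F_{y}u_{x}+(F_{x,y}+F_{x}F_{y})u.
\]
Adding $f^{-1}a\,\partial_{x}(fu)=a(u_{x}+F_{x}u)$, $f^{-1}b\,\partial_{y}(fu)=b(u_{y}+F_{y}u)$ and $f^{-1}c(fu)=cu$, and collecting the coefficients of $u_{x}$, $u_{y}$ and $u$, yields exactly the formulas (\ref{eq:laplace-1-4}) for $a',b',c'$.

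For the invariance I would simply substitute $a'=a+F_{y}$, $b'=b+F_{x}$, and $c'=c+aF_{x}+bF_{y}+F_{x,y}+F_{x}F_{y}$ into $h':=a'_{x}+a'b'-c'$. Here $a'_{x}=a_{x}+F_{x,y}$ and $a'b'=ab+aF_{x}+bF_{y}+F_{x}F_{y}$, so the terms $F_{x,y}$, $aF_{x}$, $bF_{y}$ and $F_{x}F_{y}$ all cancel against the corresponding terms in $c'$, leaving $h'=a_{x}+ab-c=h$. The computation for $k':=b'_{y}+a'b'-c'=k$ is identical after exchanging the roles of $x$ and $y$, using $b'_{y}=b_{y}+F_{x,y}$.

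There is no genuine obstacle here; the only place that needs care is the bookkeeping of the second-order term, where the product rule produces the cross term $F_{x,y}+F_{x}F_{y}$, and this is precisely the term that is engineered to be killed when forming the invariants. Conceptually, the content of the lemma is that $h$ and $k$ are insensitive to the substitution $u\mapsto fu$, i.e. they depend only on $M$ up to conjugation by a nowhere-vanishing scalar function, which is exactly the property that makes the Laplace sequence and its sequence of invariants well defined.
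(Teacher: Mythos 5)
Your computation is correct and is precisely the ``direct computation'' that the paper invokes without writing out: the paper states the lemma and omits the proof entirely, so your explicit conjugation $u\mapsto fu$, the identification of $a',b',c'$, and the cancellation check in $h'=a'_x+a'b'-c'$ and $k'=b'_y+a'b'-c'$ supply exactly the intended argument. No gaps.
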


\begin{lem}
\label{lem:Lap_+}For the operator $M$ given by (\ref{eq:laplace-1}),
assume that $h(x,y)\neq0$ for any $(x,y)\in\Omega$. Then, by the change
of unknown $u\mapsto u_{+}$: 
\begin{equation}
u_{+}=\mathscr{L}_{+}u:=(\partial_{y}+a)u,\label{eq:laplace-1-3}
\end{equation}
the equation (\ref{eq:laplace-1}) is transformed to 
\begin{equation}
M_{+}u_{+}=\left(\partial_{x}\partial_{y}+a_{+}\partial_{x}+b_{+}\partial_{y}+c_{+}\right)u_{+}=0,\label{eq:laplace-2}
\end{equation}
 where
\begin{align}
a_{+} & =a-\partial_{y}\log h,\nonumber \\
b_{+} & =b,\label{eq:laplace-3}\\
c_{+} & =c-a_{x}+b_{y}-b\,\partial_{y}\log h.\nonumber 
\end{align}
The invariants $h_{+},k_{+}$ of $M_{+}$ are related to those of
$M$ as 

\begin{equation}
h_{+}=2h-k-\partial_{x}\partial_{y}\log h,\quad k_{+}=h.\label{eq:laplace-3-0}
\end{equation}
\end{lem}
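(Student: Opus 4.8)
The plan is to work directly from the factorized form \eqref{eq:laplace-1-1}, $M=(\partial_x+b)(\partial_y+a)-h$, using the hypothesis that $h$ is holomorphic and nowhere vanishing on $\Omega$ (so that $1/h\in\mathcal{O}(\Omega)$, and, since $\Omega$ is simply connected, a single-valued branch of $\log h$ exists). First I would observe that, writing $u_+=\mathscr{L}_+u=(\partial_y+a)u$, the equation $Mu=0$ is equivalent to the first-order relation $(\partial_x+b)u_+=h\,u$. Inverting $h$ gives $u=h^{-1}(\partial_x+b)u_+$, and substituting this back into $u_+=(\partial_y+a)u$ yields
\[
(\partial_y+a)\,h^{-1}\,(\partial_x+b)\,u_+=u_+ ,
\]
i.e. $u_+$ is annihilated by $N:=(\partial_y+a)\,h^{-1}\,(\partial_x+b)-1$.

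Next I would bring $N$ to the normal form of \eqref{eq:laplace-1}. The second-order part of $N$ is $h^{-1}\partial_x\partial_y$, so multiplying on the left by $h$ (which does not alter the solution space, $h$ being invertible) gives an operator of the required shape, $M_+:=h\cdot N=\partial_x\partial_y+a_+\partial_x+b_+\partial_y+c_+$. Expanding $(\partial_y+a)h^{-1}(\partial_x+b)$ by the Leibniz rule and using $h\,\partial_y(h^{-1})=-\partial_y\log h$, I read off $a_+=a-\partial_y\log h$, $b_+=b$, and the coefficient of $u_+$ in the form $c_+=b_y+ab-h-b\,\partial_y\log h$. To recover the stated expression I substitute the defining relation $h=a_x+ab-c$, which replaces $ab-h$ by $c-a_x$ and hence gives $c_+=c-a_x+b_y-b\,\partial_y\log h$. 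This establishes \eqref{eq:laplace-3}, and in particular shows that $u\mapsto u_+$ indeed sends \eqref{eq:laplace-1} to \eqref{eq:laplace-2}.

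Finally, the invariant formulas \eqref{eq:laplace-3-0} follow by plugging $a_+,b_+,c_+$ into $h_+=(a_+)_x+a_+b_+-c_+$ and $k_+=(b_+)_y+a_+b_+-c_+$. For $k_+$ all terms containing $\partial_y\log h$ cancel and one is left with $a_x+ab-c=h$, so $k_+=h$. For $h_+$ one collects terms to obtain $h_+=h+(a_x-b_y)-\partial_x\partial_y\log h$, and then uses $a_x-b_y=h-k$ (immediate from the definitions $h=a_x+ab-c$, $k=b_y+ab-c$) to conclude $h_+=2h-k-\partial_x\partial_y\log h$. I expect no real obstacle here: the argument is an identity-level verification, and the only points meriting a word of care are the invertibility of $h$ and the single-valuedness of $\log h$, both guaranteed by the hypotheses and the standing assumptions on $\Omega$ from Subsection~\ref{subsec:Laplace-seq}.
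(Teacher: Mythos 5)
Your proposal is correct and follows essentially the same route as the paper: both proofs rest on the pair of first-order relations $u_+=(\partial_y+a)u$ and $(\partial_x+b)u_+=hu$ coming from the factorization $M=(\partial_x+b)(\partial_y+a)-h$, and eliminate $u$ to produce $M_+$ (the paper by differentiating the second relation in $y$ and eliminating $u,\partial_y u$; you by substituting $u=h^{-1}(\partial_x+b)u_+$ back into the first, which is the same elimination packaged as an operator composition). The coefficient and invariant computations check out.
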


\begin{proof}
We give a brief sketch of the proof. See also \cite{Darboux}. By
virtue of the expression (\ref{eq:laplace-1-1}) of $M$, the equation
(\ref{eq:laplace-1}) can be written as 

\begin{equation}
\partial_{x}u_{+}+bu_{+}-hu=0.\label{eq:laplace-3-1}
\end{equation}
 Differentiate it with respect to $y$, and eliminate $u$ and $\partial_{y}u$
from the resulted equation by the help of (\ref{eq:laplace-1-3})
and (\ref{eq:laplace-3-1}). Then we get the equation (\ref{eq:laplace-2})
with (\ref{eq:laplace-3}). The invariants $h_{+},k_{+}$ are computed
using (\ref{eq:laplace-3}), and (\ref{eq:laplace-3-0}) is obtained.
\end{proof}
Using the expression (\ref{eq:laplace-1-2}) for $M$, we can obtain
the following result in a similar way as in Lemma \ref{lem:Lap_+}.

\begin{lem}
\label{lem:Lap_-}For the operator $M$ given by (\ref{eq:laplace-1}),
assume that $k(x,y)\neq0$ for any $(x,y)\in\Omega$. Then, by the change
of unknown $u\mapsto u_{-}$:
\begin{equation}
u_{-}=\mathscr{L}_{-}u:=(\partial_{x}+b)u,\label{eq:laplace-3-1-1}
\end{equation}
 the equation (\ref{eq:laplace-1}) is transformed to 
\[
M_{-}u_{-}=\left(\partial_{x}\partial_{y}+a_{-}\partial_{x}+b_{-}\partial_{y}+c_{-}\right)u_{-}=0,
\]
where 
\begin{align}
a_{-} & =a,\nonumber \\
b_{-} & =b-\partial_{x}\log k,\label{eq:laplace-3-2}\\
c_{-} & =c+a_{x}-b_{y}-a\,\partial_{x}\log k.\nonumber 
\end{align}
 The invariants $h_{-},k_{-}$ of $M_{-}$ are related to those of
$M$ as

\begin{equation}
h_{-}=k,\quad k_{-}=2k-h-\partial_{x}\partial_{y}\log k.\label{eq:laplace3-3}
\end{equation}
\end{lem}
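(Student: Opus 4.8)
The plan is to mirror the proof of Lemma~\ref{lem:Lap_+}, now using the factorization (\ref{eq:laplace-1-2}) of $M$ in place of (\ref{eq:laplace-1-1}). Since $M = (\partial_y+a)(\partial_x+b) - k$, the equation $Mu=0$ is equivalent to
\[
(\partial_y + a)u_- - k u = 0, \qquad\text{i.e.}\qquad \partial_y u_- + a u_- = k u ,
\]
where $u_- = (\partial_x+b)u$. This plays the role that (\ref{eq:laplace-3-1}) played in Lemma~\ref{lem:Lap_+}, with the interchange $x\leftrightarrow y$, $a\leftrightarrow b$, $h\leftrightarrow k$.

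First I would differentiate this relation with respect to $x$, obtaining
\[
\partial_x\partial_y u_- + a_x u_- + a\,\partial_x u_- = k_x u + k\,\partial_x u .
\]
To turn this into a closed equation for $u_-$ I then eliminate $u$ and $\partial_x u$: the relation above gives $u = k^{-1}(\partial_y u_- + a u_-)$ --- this is the one place where the hypothesis $k\neq0$ on $\Omega$ enters --- and $u_- = \partial_x u + b u$ then gives $\partial_x u = u_- - b k^{-1}(\partial_y u_- + a u_-)$. Substituting both expressions and collecting the coefficients of $\partial_x u_-$, $\partial_y u_-$ and $u_-$ yields $M_- u_- = 0$ with $a_- = a$, $b_- = b - \partial_x\log k$ and $c_- = a_x + ab - k - a\,\partial_x\log k$; rewriting $k = b_y + ab - c$ puts the last coefficient into the stated form $c_- = c + a_x - b_y - a\,\partial_x\log k$, which is (\ref{eq:laplace-3-2}).

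Finally I would compute the invariants of $M_-$ directly from their defining formulas $h_- = (a_-)_x + a_- b_- - c_-$ and $k_- = (b_-)_y + a_- b_- - c_-$. Substituting $a_-,b_-,c_-$, the two terms containing $\partial_x\log k$ cancel in $h_-$ and one is left with $h_- = b_y + ab - c = k$; in $k_-$ the derivative $(b_-)_y$ contributes the extra term $-\partial_x\partial_y\log k$, and after collecting terms and using $h = a_x+ab-c$ together with $k = b_y+ab-c$ one recognizes $k_- = 2k - h - \partial_x\partial_y\log k$, i.e.\ (\ref{eq:laplace3-3}). The whole argument is a routine calculation; the only steps that need care are the elimination of $u$ and $\partial_x u$ (the sole place $k\neq0$ is used) and spotting the combination $2k-h$ at the end, so there is really no obstacle here --- the statement is the exact mirror image of Lemma~\ref{lem:Lap_+}.
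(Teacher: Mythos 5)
Your proposal is correct and follows exactly the route the paper intends: the paper omits the proof of this lemma, stating only that it is obtained ``in a similar way'' from the factorization (\ref{eq:laplace-1-2}), and your computation is precisely that mirror of the proof of Lemma \ref{lem:Lap_+} (write $Mu=0$ as $\partial_y u_- + a u_- = ku$, differentiate in $x$, eliminate $u$ and $\partial_x u$ using $k\neq 0$, then compute the invariants from their definitions). All the resulting coefficients and the invariant relations check out.
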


The expressions (\ref{eq:laplace-1-1}) and (\ref{eq:laplace-1-2})
for $M$ imply that 
\begin{equation}
\left(\mathscr{L}_{-}\circ\mathscr{L}_{+}\right)u=h\cdot u,\quad\left(\mathscr{L}_{+}\circ\mathscr{L}_{-}\right)u=k\cdot u\label{eq:laplace-3-4}
\end{equation}
holds for any solution $u$ of $Mu=0$.

As a consequence of Lemmas \ref{lem:Lap_+}, \ref{lem:Lap_-}, we
have the following sequence of hyperbolic differential operators starting
from $M_{0}:=M$: 
\begin{equation}
\cdots\leftarrow M_{-n}\leftarrow\cdots\leftarrow M_{-1}\leftarrow M_{0}\to M_{1}\to\cdots\to M_{n}\to\cdots,\label{eq:laplace-3-3}
\end{equation}
where, for $n\geq1$, $M_{n}$ is obtained from $M_{n-1}$ by applying
Lemma \ref{lem:Lap_+} under the condition that the invariant $h$
of $M_{n-1}$ satisfies $h\neq0$, and $M_{-n}$ is obtained from
$M_{-n+1}$ by applying Lemma \ref{lem:Lap_-} under the condition
that the invariant $k$ of $M_{-n+1}$ satisfies $k\neq0$. The sequence
$\{M_{n}\}_{n\geq0}$ or $\{M_{n}\}_{n\leq0}$ is called the \emph{Laplace
sequence} obtained from $M_{0}$. The invariants of $M_{n}$ will
be denoted as $h_{n},k_{n}$. In considering the Laplace sequence,
we tacitly assume that the invariants do not vanish.

The following results are the consequences of Lemmas \ref{lem:Lap_+},
\ref{lem:Lap_-}.
\begin{prop}
\label{prop:lap-1}For the Laplace sequence $\{M_{n}\}_{n\in\mathbb{Z}_{\geq0}}$,
$M_{n}=\partial_{x}\partial_{y}+a_{n}\partial_{x}+b_{n}\partial_{y}+c_{n}$, the operator
$M_{n+1}$ and its invariants are determined from $M_{n}$ as 

\begin{align}
a_{n+1} & =a_{n}-\partial_{y}\log h_{n},\nonumber \\
b_{n+1} & =b_{n},\label{eq:laplace-4}\\
c_{n+1} & =c_{n}-\partial_{x}a_{n}+\partial_{y}b_{n}-b_{n}\partial_{y}\log h_{n},\nonumber 
\end{align}
and 

\begin{equation}
h_{n+1}=2h_{n}-k_{n}-\partial_{x}\partial_{y}\log h_{n},\;k_{n+1}=h_{n}.\label{eq:laplace-4-1}
\end{equation}
\end{prop}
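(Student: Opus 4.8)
The plan is to recognize that Proposition~\ref{prop:lap-1} is simply Lemma~\ref{lem:Lap_+} rewritten in the index notation of the Laplace sequence, so that essentially no new argument is needed beyond a careful relabeling together with one short, routine verification.

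First I would invoke the definition of the sequence $\{M_n\}_{n\ge0}$ recalled above: for each $n\ge0$ the operator $M_{n+1}$ is, by construction, the operator ``$M_+$'' attached to $M=M_n$ in Lemma~\ref{lem:Lap_+}, obtained from $M_n u=0$ through the change of unknown $u\mapsto u_+=\mathscr{L}_+u=(\partial_y+a_n)u$ of (\ref{eq:laplace-1-3}); this step is admissible precisely because we tacitly assume $h_n\neq0$ on $\Omega$. Applying Lemma~\ref{lem:Lap_+} with $(a,b,c,h,k)=(a_n,b_n,c_n,h_n,k_n)$ and $(a_+,b_+,c_+,h_+,k_+)=(a_{n+1},b_{n+1},c_{n+1},h_{n+1},k_{n+1})$, the coefficient relations (\ref{eq:laplace-3}) turn into (\ref{eq:laplace-4}) and the invariant relations (\ref{eq:laplace-3-0}) turn into (\ref{eq:laplace-4-1}), which is the claim.

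If instead of citing Lemma~\ref{lem:Lap_+} one wants a direct check, the single step with content is to recover (\ref{eq:laplace-4-1}) from (\ref{eq:laplace-4}) using the defining formulas $h_{n+1}=\partial_x a_{n+1}+a_{n+1}b_{n+1}-c_{n+1}$ and $k_{n+1}=\partial_y b_{n+1}+a_{n+1}b_{n+1}-c_{n+1}$. Substituting $b_{n+1}=b_n$, $a_{n+1}=a_n-\partial_y\log h_n$, and $c_{n+1}=c_n-\partial_x a_n+\partial_y b_n-b_n\,\partial_y\log h_n$, the terms proportional to $b_n\,\partial_y\log h_n$ cancel; collecting what remains and recognizing $h_n=\partial_x a_n+a_n b_n-c_n$ and $k_n=\partial_y b_n+a_n b_n-c_n$ yields $k_{n+1}=h_n$ and $h_{n+1}=2h_n-k_n-\partial_x\partial_y\log h_n$. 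This is a short mechanical computation that I would not spell out in full.

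I do not expect any genuine obstacle: the proposition is an immediate iterate of Lemma~\ref{lem:Lap_+}, and the proof of that lemma already contains all the substance. The only points requiring care are bookkeeping ones -- keeping the two factorizations (\ref{eq:laplace-1-1}) and (\ref{eq:laplace-1-2}) of $M$ distinct so that the roles of $h$ and $k$ are not accidentally swapped, and stating explicitly that the standing hypothesis that the invariants $h_0,\dots,h_n$ do not vanish is exactly what guarantees that $M_1,\dots,M_{n+1}$ are well defined.
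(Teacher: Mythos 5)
Your proof is correct and matches the paper, which states Proposition \ref{prop:lap-1} as an immediate consequence of Lemma \ref{lem:Lap_+} applied to $M_n$ with the obvious relabeling. The optional direct verification of (\ref{eq:laplace-4-1}) from the defining formulas for $h_{n+1},k_{n+1}$ also checks out.
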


\begin{prop}
\label{prop:lap-2}For the Laplace sequence $\{M_{n}\}_{n\in\mathbb{Z}_{\leq0}}$,
$M_{n}=\partial_{x}\partial_{y}+a_{n}\partial_{x}+b_{n}\partial_{y}+c_{n}$, the operator
$M_{n-1}$ and its invariants are determined from $M_{n}$ as 

\begin{align}
a_{n-1} & =a_{n},\nonumber \\
b_{n-1} & =b_{n}-\partial_{x}\log k_{n},\label{eq:laplace-5}\\
c_{n-1} & =c_{n}+\partial_{x}a_{n}-\partial_{y}b_{n}-a_{n}\partial_{x}\log k_{n},\nonumber 
\end{align}
 and 

\begin{equation}
h_{n-1}=k_{n},\;k_{n-1}=2k_{n}-h_{n}-\partial_{x}\partial_{y}\log k_{n}.\label{eq:laplace-5-1}
\end{equation}
 
\end{prop}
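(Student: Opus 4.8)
The assertion is just Lemma~\ref{lem:Lap_-} reread along the Laplace sequence, so the plan is to unwind the definitions rather than to compute anything new. First I would recall from the construction of the sequence~(\ref{eq:laplace-3-3}) that, for $n\le 0$, the operator $M_{n-1}$ is \emph{by definition} the operator obtained by applying Lemma~\ref{lem:Lap_-} to $M_n$; that is, writing $M_n$ in the factored form (\ref{eq:laplace-1-2}) and performing the change of unknown $u\mapsto u_{n-1}=(\partial_x+b_n)u$, one has $M_{n-1}=(M_n)_-$. Here the standing hypothesis that none of the invariants vanish guarantees that the invariant $k_n$ of $M_n$ satisfies $k_n\neq 0$ on $\Omega$, so that $\partial_x\log k_n$ is defined and Lemma~\ref{lem:Lap_-} applies.

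Then I would simply substitute the data of $M_n$ into the formulas of Lemma~\ref{lem:Lap_-}: with $a=a_n$, $b=b_n$, $c=c_n$ the invariants occurring there become $h=h_n$ and $k=k_n$, so that (\ref{eq:laplace-3-2}) becomes (\ref{eq:laplace-5}) and (\ref{eq:laplace3-3}) becomes (\ref{eq:laplace-5-1}) verbatim. That completes the proof. For a self-contained derivation not relying on the earlier lemma, I would instead repeat the computation sketched in the proof of Lemma~\ref{lem:Lap_+} with $x$ and $y$, and $a$ and $b$, interchanged: from $M_nu=0$ in the form (\ref{eq:laplace-1-2}) one gets $\partial_y u_{n-1}+a_n u_{n-1}-k_n u=0$ with $u_{n-1}=(\partial_x+b_n)u$; differentiating in $x$ and eliminating $u$ and $\partial_x u$ by means of these two relations yields $M_{n-1}u_{n-1}=0$ with the coefficients (\ref{eq:laplace-5}), and the invariants (\ref{eq:laplace-5-1}) then follow from the definitions $h=\partial_x a+ab-c$, $k=\partial_y b+ab-c$ by a short differentiation.

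There is nothing genuinely difficult here; the only points to watch are bookkeeping ones. One must keep straight that the ``$-$'' operation of Lemma~\ref{lem:Lap_-} \emph{decreases} the sequence index (so it produces $M_{n-1}$ from $M_n$, whereas the ``$+$'' operation of Lemma~\ref{lem:Lap_+} produced $M_{n+1}$), and one must not drop the minus sign in $b_{n-1}=b_n-\partial_x\log k_n$ or miscopy the sign pattern in $c_{n-1}$. As a consistency check one can verify that (\ref{eq:laplace-5-1}) together with (\ref{eq:laplace-4-1}), applied with a shifted index, reproduces the discrete Toda relation $\partial_x\partial_y\log h_n=-h_{n+1}+2h_n-h_{n-1}$ recorded in the introduction, confirming that Propositions~\ref{prop:lap-1} and~\ref{prop:lap-2} are mutually compatible.
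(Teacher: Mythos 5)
Your proposal is correct and matches the paper exactly: the paper gives no separate proof of Proposition \ref{prop:lap-2}, simply stating that it (together with Proposition \ref{prop:lap-1}) is an immediate consequence of Lemmas \ref{lem:Lap_+} and \ref{lem:Lap_-}, which is precisely your reading. Your optional self-contained rederivation and the Toda-consistency check are fine but not needed.
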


Put $r_{n}=-k_{n}=-h_{n-1}$. Then the relations (\ref{eq:laplace-4-1})
and (\ref{eq:laplace-5-1}) are expressed as 
\begin{equation}
\partial_{x}\partial_{y}\log r_{n}=r_{n+1}-2r_{n}+r_{n-1},\quad n\in\mathbb{Z}.\label{eq:toda-2}
\end{equation}
 This recurrence relation is called the \emph{2-dimensional Toda equation}
(2dTE). In Section \ref{subsec:2DTE-laplace}, we consider another
form of 2dTE.

\subsection{\label{subsec:The-sequence-normal}Sequence of hyperbolic operators
of the normal form}

To relate the Laplace sequence to another form of the 2dTE, we discuss
the reduction of the operator 
\begin{equation}
M=\partial_{x}\partial_{y}+a\partial_{x}+b\partial_{y}+c,\label{eq:laplace-6-1}
\end{equation}
to the normal form $M'=\partial_{x}\partial_{y}+a'\partial_{x}+c'$, which is obtained
from (\ref{eq:laplace-6-1}) by eliminating the term $b\partial_{y}$ by
considering $M'=f^{-1}\cdot M\cdot f$ with an appropriate function
$f$. This corresponds to consider the change of unknown $u\to v=f^{-1}u$
for the system $Mu=0$. To find such $f$, note the expression (\ref{eq:laplace-1-4})
for $b'$ in Lemma \ref{lem:normal-form}.
\begin{lem}
\label{lem:norm-1}Take $f$ satisfying $b+\partial_{x}\log f=0$, namely,
$f=\exp F,F=-\int^{x}b(t,y)dt$. Then $M'=f^{-1}\cdot M\cdot f$ has
the form $M'=\partial_{x}\partial_{y}+a'\partial_{x}+c'$ with 
\[
a'=a+F_{y},\quad c'=c+aF_{x}+bF_{y}+F_{x,y}+F_{x}F_{y}.
\]
In this case $a'$ and $c'$ are related to the invariants $h,k$
as 
\begin{equation}
a'_{x}=h-k,\quad c'=-k.\label{eq:laplace-8}
\end{equation}
\end{lem}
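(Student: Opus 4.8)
The plan is to verify Lemma \ref{lem:norm-1} by a direct computation that splits into two parts: first, confirm that conjugation by the specific $f$ produces an operator with no $\partial_y$ term, and second, re-express the coefficients $a'$ and $c'$ of the resulting normal form in terms of the invariants $h,k$ of the original operator $M$.

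For the first part, I would simply invoke Lemma \ref{lem:normal-form}: for any $f$ with $F=\log f$, the conjugated operator $M'=f^{-1}Mf$ has coefficient $b'=b+F_x$. Choosing $F=-\int^{x}b(t,y)\,dt$ forces $F_x=-b$, hence $b'=0$, so $M'=\partial_x\partial_y+a'\partial_x+c'$ with $a'=a+F_y$ and $c'=c+aF_x+bF_y+F_{x,y}+F_xF_y$ read off directly from (\ref{eq:laplace-1-4}). The only subtlety is noting that $\int^{x}$ denotes a partial integration in $x$ with $y$ held as a parameter, so $F_y=-\int^{x}b_y(t,y)\,dt$, which is again holomorphic on the simply connected domain; invertibility of $f=\exp F$ is automatic.

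For the second part, the cleanest route is to use the invariance statement in Lemma \ref{lem:normal-form}: since the invariants of $M$ equal those of $M'$, it suffices to compute the invariants $h',k'$ of the reduced operator $M'=\partial_x\partial_y+a'\partial_x+0\cdot\partial_y+c'$ directly from the definitions $h=a_x+ab-c$ and $k=b_y+ab-c$. With $b'=0$ these collapse to $h'=a'_x-c'$ and $k'=-c'$. Therefore $c'=-k'=-k$ and $a'_x=h'+c'=h-k$, which is exactly (\ref{eq:laplace-8}). This is essentially a one-line argument once one has the first part in hand.

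I do not expect a genuine obstacle here; the statement is a bookkeeping corollary of Lemma \ref{lem:normal-form} together with the definitions of the invariants. The only point requiring a little care is making sure the formula $c'=-k$ is consistent: one should cross-check it against the explicit expression $c'=c+aF_x+bF_y+F_{x,y}+F_xF_y$ by substituting $F_x=-b$, which gives $c'=c-ab+bF_y+F_{x,y}+F_xF_y=c-ab-bb_y\!\cdot\!(\text{from }F_y)+\dots$; rather than expanding this messily, I would instead observe that we already know $k'=k$ by invariance and $k'=-c'$ from $b'=0$, so the explicit formula for $c'$ must agree, and no further computation is needed. Thus the proof is: apply Lemma \ref{lem:normal-form} with the stated $f$ to get $b'=0$ and the displayed $a',c'$; then read off $h'=a'_x-c'$, $k'=-c'$ from the definition of invariants; finally use $h'=h$, $k'=k$ to conclude (\ref{eq:laplace-8}).
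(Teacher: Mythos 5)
Your argument is correct and is essentially identical to the paper's proof: both obtain $a',c'$ by specializing Lemma \ref{lem:normal-form} to $F_x=-b$ (so $b'=0$), and then derive (\ref{eq:laplace-8}) by computing the invariants of $M'$ with $b'=0$ and invoking their invariance under conjugation. No gaps.
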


\begin{proof}
The expressions for $a',c'$ follows from Lemma \ref{lem:normal-form}.
Since the invariants of $M'$ are the same as those for $M$ by Lemma
\ref{lem:normal-form}, we have 
\[
h=a'_{x}-a'b'-c'=a'_{x}-c',\quad k=b_{y}'-a'b'-c'=-c',
\]
which give (\ref{eq:laplace-8}).
\end{proof}
Suppose we are given $M_{0}$ in the normal form $M_{0}=\partial_{x}\partial_{y}+a_{0}\partial_{x}+c_{0}$.
We construct the sequence $\{M_{n}\}_{n\in\mathbb{Z}}$ consisting of the
operators 
\[
M_{n}=\partial_{x}\partial_{y}+a_{n}\partial_{x}+c_{n},
\]
such that $M_{n+1}$ is obtained from $M_{n}$ by the process given
in Lemma \ref{lem:Lap_+}. Let $\{M_{n}\}_{n\geq0}$ be the Laplace
sequence constructed from $M_{0}$ in the normal form by virtue of
Proposition \ref{prop:lap-1}. Then (\ref{eq:laplace-4}) implies
that the operators $M_{n}$ for $n\geq0$ are in the normal form and
satisfy our requirement. But the operators, constructed from $M_{0}$
applying Proposition \ref{prop:lap-2}, is not necessarily in the
normal form. So we construct the operators for $n<0$ step by step.
We construct $M_{-1}=\partial_{x}\partial_{y}+a_{-1}\partial_{x}+c_{-1}$ from $M_{0}$
as follows. Apply Lemma \ref{lem:Lap_-} to $M_{0}$ to obtain 
\[
M_{-}=\partial_{x}\partial_{y}+a_{-}\partial_{x}+b_{-}\partial_{y}+c_{-},
\]
where
\begin{align*}
a_{-} & =a_{0},\\
b_{-} & =-\partial_{x}\log k_{0},\\
c_{-} & =c_{0}+\partial_{x}a_{0}-a_{0}\,\partial_{x}\log k_{0}.
\end{align*}
 Then applying Lemma \ref{lem:norm-1}, we take $M_{-}$ to the normal
form
\[
M_{-1}=\partial_{x}\partial_{y}+a_{-1}\partial_{x}+c_{-1},
\]
where, using $F=\log k_{0}$, the coefficients are given by
\begin{align*}
a_{-1} & =a_{-}+\partial_{y}\log k_{0}=a_{0}+F_{y},\\
c_{-1} & =c_{-}+(a_{-})F_{x}+(b_{-})F_{y}+F_{x,y}+F_{x}F_{y}=c_{0}+\partial_{x}a_{0}+F_{x,y}.
\end{align*}
We should check that $M_{0}$ is obtained from $M_{-1}$ by the process
in Lemma \ref{lem:Lap_+}. This is easily checked as follows. Let
us denote the operator obtained from $M_{-1}$ by the process in Lemma
\ref{lem:Lap_+} as $M_{0}'=\partial_{x}\partial_{y}+a'\partial_{x}+c'$. Then $a'$
and $c'$ is obtained from $M_{-1}$ as
\begin{align*}
a' & =a_{-1}-\partial_{y}\log h_{-1}=a_{0}+F_{y}-\partial_{y}\log h_{-1}=a_{0},\\
c' & =c_{-1}-\partial_{x}a_{-1}=c_{0}+\partial_{x}a_{0}+F_{x,y}-\partial_{x}(a_{0}+F_{y})=c_{0}.
\end{align*}
 Here we used $h_{-1}=k_{0}.$ Repeating this construction successively,
we obtain the sequence $M_{0}\to M_{-1}\to M_{-2}\to\cdots$ of the
normal form which satisfy our requirement that $M_{n+1}$ is obtained
from $M_{n}$ by the process in Lemma \ref{lem:Lap_+} for any $n\leq-1$.
Thus we have proved the following.
\begin{prop}
\label{prop:lap-3}From the given $M_{0}=\partial_{x}\partial_{y}+a_{0}\partial_{x}+c_{0}$,
we can construct the sequence of hyperbolic operators of the normal
form
\[
M_{n}=\partial_{x}\partial_{y}+a_{n}\partial_{x}+c_{n},\quad n\in\mathbb{Z}
\]
 such that $M_{n+1}$ is obtained from $M_{n}$ by 
\begin{align*}
a_{n+1} & =a_{n}-\partial_{y}\log h_{n},\\
c_{n+1} & =c_{n}-\partial_{x}a_{n}
\end{align*}
under the condition that the invariant $h_{n}$ of $M_{n}$ is not
zero for any $n$.
\end{prop}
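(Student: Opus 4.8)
The statement to prove is Proposition~\ref{prop:lap-3}, which is essentially already established in the running discussion preceding it. My plan is therefore to organize that discussion into a clean inductive argument.

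\medskip

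The plan is to argue by induction on $n$, separately in the two directions $n\ge 0$ and $n<0$, that one can build a sequence of normal-form operators $M_n=\partial_x\partial_y+a_n\partial_x+c_n$ with $M_{n+1}$ obtained from $M_n$ by the Laplace step of Lemma~\ref{lem:Lap_+}, and that the resulting update rule simplifies to $a_{n+1}=a_n-\partial_y\log h_n$, $c_{n+1}=c_n-\partial_x a_n$. For $n\ge 0$ I would start from the given $M_0$ and directly apply Proposition~\ref{prop:lap-1}: since $M_0$ has $b_0=0$, formula (\ref{eq:laplace-4}) gives $b_1=b_0=0$, so $M_1$ is again in normal form, and the $c$-update (\ref{eq:laplace-4}) collapses to $c_{n+1}=c_n-\partial_x a_n$ because $b_n\equiv 0$; iterating, every $M_n$ with $n\ge 0$ is in normal form and obeys the stated recursion, provided each $h_n\neq 0$ so that Lemma~\ref{lem:Lap_+} applies.

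\medskip

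For $n<0$ the situation is the one spelled out in the text just before the proposition: applying Lemma~\ref{lem:Lap_-} to a normal-form $M_n$ produces an operator $M_-$ that is \emph{not} normal (it acquires a term $b_-\partial_y$ with $b_-=-\partial_x\log k_n$), so one must then apply Lemma~\ref{lem:norm-1} with $F=\log k_n$ to restore the normal form, defining $M_{n-1}$. I would carry out that two-step composition once to get the explicit $a_{n-1}=a_n+\partial_y\log k_n$, $c_{n-1}=c_n+\partial_x a_n+\partial_x\partial_y\log k_n$. The essential verification is that this $M_{n-1}$ really does map \emph{forward} to $M_n$ under Lemma~\ref{lem:Lap_+}: one computes the forward image $M_n'$ of $M_{n-1}$, using that the invariant $h_{n-1}$ of $M_{n-1}$ equals $k_n$ (the invariant is unchanged by the normalization of Lemma~\ref{lem:norm-1}, and Lemma~\ref{lem:Lap_-} gives $h_-=k_n$), and checks $a_n'=a_{n-1}-\partial_y\log h_{n-1}=a_n$ and $c_n'=c_{n-1}-\partial_x a_{n-1}=c_n$. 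This is exactly the computation displayed in the excerpt; with it in hand, the forward recursion $a_{n+1}=a_n-\partial_y\log h_n$, $c_{n+1}=c_n-\partial_x a_n$ holds for the step $M_{n-1}\to M_n$ as well, i.e.\ for all $n\le -1$. Induction then extends the construction to all $n\in\mathbb{Z}$, under the standing hypothesis that no $h_n$ vanishes.

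\medskip

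I do not expect a genuine obstacle here: the proposition is a bookkeeping statement assembling Lemmas~\ref{lem:normal-form}, \ref{lem:Lap_+}, \ref{lem:Lap_-} and \ref{lem:norm-1} together with Proposition~\ref{prop:lap-1}. The one point that needs care — and the ``hard part'' in the sense of being the only thing that is not immediate — is the consistency check that the backward-constructed $M_{n-1}$ is a genuine preimage of $M_n$ under the forward Laplace map, which rests on the identity $h_{n-1}=k_n$ for the invariants; this is why the normalization must use precisely $F=\log k_n$ and not some other gauge. Once that is noted, the proof is just the string of elementary identities already exhibited before the statement, so in the write-up I would simply refer back to that computation rather than repeat it.
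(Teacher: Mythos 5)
Your proposal is correct and follows essentially the same route as the paper: the forward direction is the direct specialization of Proposition \ref{prop:lap-1} to $b_n\equiv 0$, and the backward direction is the same two-step composition of Lemma \ref{lem:Lap_-} with the normalization of Lemma \ref{lem:norm-1} using $F=\log k_n$, together with the consistency check via $h_{n-1}=k_n$ that the paper carries out explicitly before stating the proposition. Nothing is missing.
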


The sequence $\{M_{n}\}_{n\in\mathbb{Z}}$ obtained in Proposition \ref{prop:lap-3}
is also called the Laplace sequence.

\subsection{\label{subsec:2DTE-laplace}2dTE arising from the Laplace sequence}

In this section, changing the notation used in Section \ref{subsec:The-sequence-normal},
we write the Laplace sequence $\{M_{n}\}_{n\in\mathbb{Z}}$ of the normal
form as
\begin{equation}
M_{n}=\partial_{x}\partial_{y}+s_{n+1}\partial_{x}+r_{n}.\label{eq:toda-4}
\end{equation}
Then (\ref{eq:laplace-8}) of Lemma \ref{lem:norm-1} says that the
invariants $h_{n},k_{n}$ are given by
\[
h_{n}=\partial_{x}s_{n+1}-r_{n},\quad k_{n}=-r_{n}.
\]
Since $\{M_{n}\}_{n\in\mathbb{Z}}$ is the Laplace sequence, (\ref{eq:laplace-4})
of Proposition \ref{prop:lap-1} implies $s_{n+1}=s_{n}-\partial_{y}\log h_{n-1}$.
Thus, noting $h_{n-1}=k_{n}$, we have the recurrence relation for
the pair $(s_{n+1},r_{n})$:
\begin{equation}
\begin{cases}
\partial_{x}s_{n+1}=r_{n}-r_{n+1},\\
\partial_{y}\log r_{n}=s_{n}-s_{n+1}.
\end{cases}\label{eq:toda-3}
\end{equation}
Conversely, the following result is known and is easily shown.
\begin{prop}
\label{prop:Toda-1}If $\{(s_{n+1},r_{n})\}_{n\in\mathbb{Z}}$ satisfies (\ref{eq:toda-3}),
then the sequence $\{M_{n}\}_{n\in\mathbb{Z}}$ defined by (\ref{eq:toda-4})
is the Laplace sequence.
\end{prop}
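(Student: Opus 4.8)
The plan is to verify directly that the recurrence system \eqref{eq:toda-3} forces the sequence $\{M_n\}$ defined by \eqref{eq:toda-4} to be a Laplace sequence, i.e.\ that $M_{n+1}$ is obtained from $M_n$ by the $\mathscr{L}_+$-process of Lemma \ref{lem:Lap_+}. Concretely, I would start from $M_n = \partial_x\partial_y + s_{n+1}\partial_x + r_n$, which is already in normal form ($b_n = 0$), so by Lemma \ref{lem:norm-1} its invariants are $h_n = \partial_x s_{n+1} - r_n$ and $k_n = -r_n$. By Proposition \ref{prop:lap-3}, the $\mathscr{L}_+$-process applied to a normal-form operator with $a_n = s_{n+1}$, $c_n = r_n$ produces a new normal-form operator with coefficients $\widetilde a = s_{n+1} - \partial_y\log h_n$ and $\widetilde c = r_n - \partial_x s_{n+1}$.

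The heart of the proof is then to check that $\widetilde a = s_{n+2}$ and $\widetilde c = r_{n+1}$, i.e.\ that the operator produced by the $\mathscr{L}_+$-process is exactly $M_{n+1}$. For $\widetilde c$: from the first equation of \eqref{eq:toda-3}, $\partial_x s_{n+1} = r_n - r_{n+1}$, hence $\widetilde c = r_n - (r_n - r_{n+1}) = r_{n+1} = c_{n+1}$, as wanted. For $\widetilde a$: I need $s_{n+1} - \partial_y\log h_n = s_{n+2}$, i.e.\ $\partial_y \log h_n = s_{n+1} - s_{n+2}$. By the second equation of \eqref{eq:toda-3} (with $n$ replaced by $n+1$), the right-hand side equals $\partial_y\log r_{n+1}$, so it suffices to show $h_n = r_{n+1}$ up to a factor constant in $y$ — but in fact I claim $h_n = r_{n+1}$ on the nose: using $h_n = \partial_x s_{n+1} - r_n$ and again $\partial_x s_{n+1} = r_n - r_{n+1}$ gives $h_n = -r_{n+1}$. (So $h_n = -r_{n+1} = k_{n+1}$, which is the consistency relation $h_n = k_{n+1}$ one expects in a Laplace sequence.) Then $\partial_y\log h_n = \partial_y\log(-r_{n+1}) = \partial_y\log r_{n+1} = s_{n+1} - s_{n+2}$, so $\widetilde a = s_{n+2} = a_{n+1}$, completing the identification $M_{n+1} = (M_n)_+$.

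Two minor points need attention. First, the $\mathscr{L}_+$-process requires $h_n \neq 0$ everywhere, which here reads $r_{n+1} \neq 0$; this is part of the tacit nondegeneracy assumption, and I would state it explicitly as a hypothesis (the $r_n$ are nowhere-vanishing holomorphic functions, so the logarithms make sense locally). Second, I should confirm that the $\mathscr{L}_+$-process does not introduce a $\partial_y$-term, i.e.\ that $b_{n+1} = b_n = 0$; this is exactly the content of the middle line of \eqref{eq:laplace-4} / Proposition \ref{prop:lap-3}, so normal form is preserved and the induction closes. Since the sequence extends in both directions by construction, $\{M_n\}_{n\in\mathbb{Z}}$ is a Laplace sequence.

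I do not anticipate a serious obstacle: the proof is a short computation chasing the two identities in \eqref{eq:toda-3} through the formulas of Lemma \ref{lem:norm-1} and Proposition \ref{prop:lap-3}. The only place to be careful is bookkeeping of the index shift (the pairing is $(s_{n+1}, r_n)$, so the ``$n+1$'' instance of the second equation of \eqref{eq:toda-3} is what feeds the $\partial_y\log h_n$ term), and making sure the sign in $h_n = -r_{n+1}$ is tracked correctly so that $\partial_y\log h_n = \partial_y\log r_{n+1}$.
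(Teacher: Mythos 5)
Your proof is correct and is exactly the direct verification the paper has in mind (the paper omits it as ``easily shown''): the normal-form invariant formulas $h_n=\partial_x s_{n+1}-r_n$, $k_n=-r_n$ from Lemma \ref{lem:norm-1}, combined with the two identities of (\ref{eq:toda-3}), give $h_n=-r_{n+1}$ and hence $\widetilde a=s_{n+2}$, $\widetilde c=r_{n+1}$, so the $\mathscr{L}_{+}$-process of Proposition \ref{prop:lap-3} carries $M_n$ to $M_{n+1}$. The only blemish is the phrase ``$h_n=r_{n+1}$ on the nose,'' which contradicts your own correct computation $h_n=-r_{n+1}$ in the next clause; since $\partial_y\log(-r_{n+1})=\partial_y\log r_{n+1}$ this sign does not affect the argument, and you do flag it at the end.
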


We mainly consider the 2-dimensional Toda-Hirota equation (2dTHE):

\begin{equation}
\partial_{x}\partial_{y}\log\tau_{n}=\frac{\tau_{n+1}\tau_{n-1}}{\tau_{n}^{2}},\quad n\in\mathbb{Z}.\label{eq:toda-1}
\end{equation}

The following result gives the link between 2dTHE and 2dTE, which
is well known and is easily verified.
\begin{prop}
\label{prop:Toda-2}Let $\{\tau_{n}(x,y)\}_{n\in\mathbb{Z}}$ satisfy the
equation (\ref{eq:toda-1}) and let $(s_{n+1},r_{n})$ be defined
by
\begin{equation}
s_{n+1}=\partial_{y}\log\left(\frac{\tau_{n}}{\tau_{n+1}}\right),\quad r_{n}=\partial_{x}\partial_{y}\log\tau_{n},\label{eq:toda-1-1}
\end{equation}
 then $\{(s_{n+1},r_{n})\}_{n\in\mathbb{Z}}$ gives a solution of (\ref{eq:toda-3}),
and $\{r_{n}\}_{n\in\mathbb{Z}}$ satisfies the 2dTE (\ref{eq:toda-2}).
\end{prop}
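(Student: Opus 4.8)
The statement to prove is Proposition~\ref{prop:Toda-2}: given a solution $\{\tau_n\}$ of the 2dTHE~(\ref{eq:toda-1}), the pair $(s_{n+1}, r_n)$ defined by~(\ref{eq:toda-1-1}) satisfies the system~(\ref{eq:toda-3}), and consequently $\{r_n\}$ satisfies the 2dTE~(\ref{eq:toda-2}).

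The plan is to verify the two equations of~(\ref{eq:toda-3}) directly by substitution. First I would check the second equation $\partial_y \log r_n = s_n - s_{n+1}$. Using the definitions, $s_n - s_{n+1} = \partial_y \log(\tau_{n-1}/\tau_n) - \partial_y \log(\tau_n/\tau_{n+1}) = \partial_y \log(\tau_{n-1}\tau_{n+1}/\tau_n^2)$. By the 2dTHE, $\tau_{n+1}\tau_{n-1}/\tau_n^2 = \partial_x\partial_y \log \tau_n = r_n$, so $\partial_y \log r_n = \partial_y \log(\tau_{n-1}\tau_{n+1}/\tau_n^2) = s_n - s_{n+1}$, as required. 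This step is immediate once one recognizes that the right-hand side of the 2dTHE is exactly $r_n$.

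Next I would check the first equation $\partial_x s_{n+1} = r_n - r_{n+1}$. From the definition, $\partial_x s_{n+1} = \partial_x \partial_y \log(\tau_n/\tau_{n+1}) = \partial_x\partial_y \log \tau_n - \partial_x\partial_y\log\tau_{n+1} = r_n - r_{n+1}$, which is exactly what we want; this uses only the definition of $r_n$ and commutativity of the partial derivatives, not the 2dTHE itself. Having established both equations of~(\ref{eq:toda-3}), the fact that $\{r_n\}$ satisfies the 2dTE~(\ref{eq:toda-2}) follows by a one-line computation: differentiate the second equation of~(\ref{eq:toda-3}) and substitute the first, $\partial_x\partial_y \log r_n = \partial_x(s_n - s_{n+1}) = (r_{n-1} - r_n) - (r_n - r_{n+1}) = r_{n+1} - 2r_n + r_{n-1}$.

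There is essentially no obstacle here; the proposition is a bookkeeping exercise chaining together the definitions~(\ref{eq:toda-1-1}), the 2dTHE, and the equality of mixed partials. The only point requiring any care is making sure the indices line up correctly in the telescoping $s_n - s_{n+1}$ and in the final substitution step, and—if one wants to be scrupulous—noting that the manipulations with $\log$ are valid locally on the simply connected domain where the $\tau_n$ are non-vanishing holomorphic functions, so that all the logarithmic derivatives appearing are well-defined single-valued holomorphic functions even if $\log\tau_n$ itself is not.
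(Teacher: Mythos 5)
Your proof is correct and follows essentially the same route as the paper: differentiate $s_{n+1}=\partial_y\log(\tau_n/\tau_{n+1})$ in $x$ to get the first equation of (\ref{eq:toda-3}), use the 2dTHE to identify $r_n$ with $\tau_{n+1}\tau_{n-1}/\tau_n^2$ for the second, and combine them for the 2dTE. No substantive difference from the paper's argument.
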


\begin{proof}
Differentiate $s_{n+1}=\partial_{y}\log\left(\frac{\tau_{n}}{\tau_{n+1}}\right)$
with respect to $x$, we have 
\[
\partial_{x}s_{n+1}=\partial_{x}\partial_{y}\log\left(\frac{\tau_{n}}{\tau_{n+1}}\right)=\partial_{x}\partial_{y}\log\tau_{n}-\partial_{x}\partial_{y}\log\tau_{n+1}=r_{n}-r_{n+1}.
\]
Similarly
\[
\partial_{y}\log r_{n}=\partial_{y}\log\left(\partial_{x}\partial_{y}\log\tau_{n}\right)=\partial_{y}\log\left(\frac{\tau_{n+1}\tau_{n-1}}{\tau_{n}^{2}}\right)=s_{n}-s_{n+1}.
\]
The last assertion follows as $\partial_{x}\partial_{y}\log r_{n}=\partial_{x}s_{n}-\partial_{x}s_{n+1}=r_{n-1}-2r_{n}+r_{n+1}$.
\end{proof}

\subsection{\label{subsec:Darboux-transformation}B\"acklund transformation }

When a solution $\{t_{n}\}_{n\in\mathbb{Z}}$ of 2dTHE is given, we can construct
a new solution of the 2dTHE as explained in the following. Proposition
\ref{prop:Toda-1} tells us that $\{(s_{n+1},r_{n})\}_{n\in\mathbb{Z}}$,
defined by (\ref{eq:toda-1-1}) taking $t_{n}$ as $\tau_{n}$, gives
a Laplace sequence $\{M_{n}\}_{n\in\mathbb{Z}}$ of the form

\[
M_{n}=\partial_{x}\partial_{y}+s_{n+1}\partial_{x}+r_{n}.
\]
Let $\Omega$ be a simply connected domain where $M_{n}$ are holomorphically
defined, and let $\mathcal{S}(n)$ be the space of holomorphic solutions of
$M_{n}u=0$ in $\Omega$. We have the differential operators $H_{n}$
and $B_{n}$ which give linear maps
\[
H_{n}:\mathcal{S}(n)\to\mathcal{S}(n+1),\quad B_{n}:\mathcal{S}(n)\to\mathcal{S}(n-1)
\]
satisfying $B_{n+1}H_{n}=1$ and $H_{n-1}B_{n}=1$ on $\mathcal{S}(n)$. They
are given by 
\begin{equation}
H_{n}=\partial_{y}+s_{n+1},\quad B_{n}=-r_{n}^{-1}\partial_{x}\label{eq:darboux-1}
\end{equation}
as is seen from (\ref{eq:laplace-3-4}) and the construction of the
operators in Proposition \ref{prop:lap-3}. 
\begin{prop}
\label{prop:Backlund-1}Assume that the invariants $r_{n}(=-h_{n-1})$
are nonzero for any $n\in\mathbb{Z}$. Then, for any $n$, $H_{n}$ and $B_{n}$
above define the linear isomorphisms
\[
H_{n}:\mathcal{S}(n)\to\mathcal{S}(n+1),\quad B_{n}:\mathcal{S}(n)\to\mathcal{S}(n-1).
\]
 
\end{prop}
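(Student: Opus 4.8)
The statement to prove is Proposition \ref{prop:Backlund-1}: assuming the invariants $r_n = -h_{n-1}$ are nonzero for all $n$, the operators $H_n = \partial_y + s_{n+1}$ and $B_n = -r_n^{-1}\partial_x$ define linear isomorphisms $\mathcal{S}(n) \to \mathcal{S}(n+1)$ and $\mathcal{S}(n) \to \mathcal{S}(n-1)$.

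The plan is to verify two things for each $n$: first, that $H_n$ and $B_n$ genuinely map solution spaces to solution spaces (i.e. are well-defined as stated), and second, that they are mutually inverse, which immediately yields that both are bijective. The well-definedness is already essentially recorded in the construction: $H_n = \partial_y + s_{n+1}$ is precisely the operator $\mathscr{L}_+$ attached to $M_n$ written in normal form (compare \eqref{eq:laplace-1-3} with $a = s_{n+1}$), so Lemma \ref{lem:Lap_+} — applicable because $h_n \ne 0$, which follows from $h_n = -k_{n+1} = r_{n+1} \ne 0$ — tells us $H_n$ carries $\mathcal{S}(n)$ into $\mathcal{S}(n+1)$. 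Dually, $B_n$ up to the nonvanishing scalar factor $-r_n^{-1}$ is the operator $\mathscr{L}_-$ attached to $M_n$ (here $b_n = 0$ in normal form, so $\mathscr{L}_- = \partial_x$), so Lemma \ref{lem:Lap_-}, applicable because $k_n = -r_n \ne 0$, shows $\partial_x$ and hence $B_n$ maps $\mathcal{S}(n)$ into $\mathcal{S}(n-1)$; multiplication by the holomorphic nonvanishing function $-r_n^{-1}$ preserves the (linear) solution space of $M_{n-1}$ by construction of that operator in Proposition \ref{prop:lap-3}.

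For the mutual-inverse claim I would appeal to \eqref{eq:laplace-3-4}, which says $\mathscr{L}_- \circ \mathscr{L}_+ = h_n$ and $\mathscr{L}_+ \circ \mathscr{L}_- = k_n$ as operators acting on $\mathcal{S}(n)$. In the present normalized notation, for $M_n$ these read $\partial_x \circ (\partial_y + s_{n+1}) = h_n = r_{n+1}$ on $\mathcal{S}(n)$ and $(\partial_y + s_{n+2}) \circ \partial_x = k_{n+1} = h_n = r_{n+1}$ — one must be careful that the "$\mathscr{L}_-$" appearing after applying $H_n$ is the one attached to $M_{n+1}$. Concretely: $B_{n+1} H_n = -r_{n+1}^{-1}\partial_x(\partial_y + s_{n+1})$, and on $\mathcal{S}(n)$ the expression $\partial_x(\partial_y + s_{n+1})u$ equals $h_n u = r_{n+1} u$ by \eqref{eq:laplace-3-4}, so $B_{n+1}H_n = \mathrm{id}$ on $\mathcal{S}(n)$. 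Similarly $H_{n-1}B_n = (\partial_y + s_n)(-r_n^{-1}\partial_x)$; using that $\partial_x u = \mathscr{L}_- u =: u_-$ solves $M_{n-1}u_- = 0$ together with the second identity of \eqref{eq:laplace-3-4} applied to $M_n$ — namely $\mathscr{L}_+ \mathscr{L}_- u = k_n u = -r_n u$, where $\mathscr{L}_+$ for $M_{n-1}$ is $\partial_y + s_n$ — gives $(\partial_y + s_n)(\partial_x u) = -r_n u$, hence $H_{n-1}B_n u = u$. Once $B_{n+1}H_n = \mathrm{id}$ on $\mathcal{S}(n)$ and $H_{n-1}B_n = \mathrm{id}$ on $\mathcal{S}(n)$ are established for all $n$, it follows formally that $H_n B_{n+1} = \mathrm{id}$ on $\mathcal{S}(n+1)$ and $B_n H_{n-1} = \mathrm{id}$ on $\mathcal{S}(n-1)$ as well (shift the index), so each $H_n$ and each $B_n$ has a two-sided inverse and is an isomorphism.

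The only genuinely delicate point is bookkeeping: keeping track of which operator $\mathscr{L}_\pm$ and which invariant $h$ or $k$ belongs to which $M_m$ as we compose, and confirming that the identity \eqref{eq:laplace-3-4}, which is stated for a single operator $M$ and its immediate neighbours, chains correctly along the sequence. I would therefore state the composition identities explicitly as $B_{n+1}\circ H_n = \mathrm{id}_{\mathcal{S}(n)}$ and $H_{n-1}\circ B_n = \mathrm{id}_{\mathcal{S}(n)}$, prove each by the one-line computation above invoking \eqref{eq:laplace-3-4} and the hypothesis $r_m \ne 0$ for the relevant $m$, and then note the isomorphism conclusion is immediate. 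There is no analytic subtlety since $\Omega$ is simply connected and all coefficients and the functions $r_m$ are holomorphic and nonvanishing there, so all the operators in sight preserve holomorphy on $\Omega$.
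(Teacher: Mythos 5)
Your overall strategy is exactly the one the paper points to (the proposition is stated without proof, citing \eqref{eq:laplace-3-4} and the normal-form construction): check that $H_{n}$ and $B_{n}$ land in the right solution spaces, then verify the composition identities $B_{n+1}H_{n}=\mathrm{id}$ and $H_{n-1}B_{n}=\mathrm{id}$ on $\mathcal{S}(n)$. The well-definedness half is fine, including your observation that the scalar $-r_{n}^{-1}=k_{n}^{-1}$ is precisely the gauge factor of Lemma \ref{lem:norm-1} carrying $M_{-}$ to its normal form $M_{n-1}$. But the bookkeeping you yourself flag as ``the only genuinely delicate point'' is where the computation goes wrong, in two places. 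First, $h_{n}=\partial_{x}s_{n+1}-r_{n}=(r_{n}-r_{n+1})-r_{n}=-r_{n+1}$, not $r_{n+1}$ (the proposition's own convention is $r_{n}=-h_{n-1}$); with your sign, $B_{n+1}H_{n}u=-r_{n+1}^{-1}h_{n}u$ would come out as $-u$. Second, and more seriously, the identity $(\partial_{y}+s_{n})\partial_{x}u=-r_{n}u$ you extract from \eqref{eq:laplace-3-4} is false: the $\mathscr{L}_{+}$ attached to $M_{n}$ is $\partial_{y}+s_{n+1}$, so the correct identity is $(\partial_{y}+s_{n+1})\partial_{x}u=k_{n}u=-r_{n}u$, and $(\partial_{y}+s_{n})\partial_{x}u$ differs from it by $(s_{n}-s_{n+1})\partial_{x}u=\partial_{y}\log r_{n}\cdot\partial_{x}u\neq0$ in general. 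Even granting your identity, passing to $H_{n-1}B_{n}u=u$ silently commutes the nonconstant function $r_{n}^{-1}$ past $\partial_{y}$. These two mistakes happen to cancel, which is why you land on the right conclusion.

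The correct one-line computation is
\[
H_{n-1}B_{n}u=(\partial_{y}+s_{n})\bigl(-r_{n}^{-1}\partial_{x}u\bigr)=-r_{n}^{-1}\bigl(\partial_{y}+s_{n}+\partial_{y}\log r_{n}^{-1}\bigr)\partial_{x}u=-r_{n}^{-1}(\partial_{y}+s_{n+1})\partial_{x}u=-r_{n}^{-1}(-r_{n}u)=u,
\]
where the middle step uses the second equation of \eqref{eq:toda-3}, $\partial_{y}\log r_{n}=s_{n}-s_{n+1}$. So the repair is short, but as written your justification of $H_{n-1}B_{n}=\mathrm{id}$ does not go through: the relation \eqref{eq:toda-3} linking $s_{n},s_{n+1},r_{n}$ must be invoked explicitly at this point and is not optional bookkeeping. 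The rest of your argument (deducing that two-sided invertibility gives the isomorphisms) is fine once these identities are established.
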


If we are given a solution $u_{0}$ of $M_{0}u=0$, we can construct
$\{u_{n}\}_{n\in\mathbb{Z}}$ such that $u_{n}\in\mathcal{S}(n)$ satisfying $u_{n+1}=H_{n}u_{n}$
and $u_{n-1}=B_{n}u_{n}$. The following is the important result to
establish our main result which assert that the Gelfand HGF gives
a solution to the 2dTHE. 

\begin{prop}
\label{prop:Backlund-2}Suppose that $\{t_{n}\}_{n\in\mathbb{Z}}$ is a solution
of 2dTHE (\ref{eq:toda-1}) from which the Laplace sequence $\{M_{n}\}_{n\in\mathbb{Z}}$
is constructed. Given $\{u_{n}\}_{n\in\mathbb{Z}}$ such that $u_{n}\in\mathcal{S}(n)$
and satisfies $u_{n+1}=H_{n}u_{n}$ and $u_{n-1}=B_{n}u_{n}$. Define
$\{\tau_{n}\}_{n\in\mathbb{Z}}$ by $\tau_{n}=t_{n}u_{n}.$ Then $\{\tau_{n}\}_{n\in\mathbb{Z}}$
gives a solution of the 2dTHE (\ref{eq:toda-1}).
\end{prop}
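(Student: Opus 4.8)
The plan is to compute $\partial_x\partial_y\log\tau_n$ directly from the factorization $\tau_n=t_nu_n$ and reduce everything to the identity $r_n=\partial_x\partial_y\log t_n=t_{n+1}t_{n-1}/t_n^2$, which holds because $\{t_n\}$ solves the 2dTHE and because, by Proposition \ref{prop:Toda-2}, the Laplace sequence attached to $\{t_n\}$ has $r_n=\partial_x\partial_y\log t_n$. First I would write
\[
\partial_x\partial_y\log\tau_n=\partial_x\partial_y\log t_n+\partial_x\partial_y\log u_n=r_n+\partial_x\partial_y\log u_n ,
\]
so that the whole problem is to identify $\partial_x\partial_y\log u_n$.

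Next I would use that $u_n\in\mathcal S(n)$, i.e. $M_nu_n=0$, which in the normal form (\ref{eq:toda-4}) reads $\partial_x\partial_yu_n=-s_{n+1}\partial_xu_n-r_nu_n$. Combining this with the elementary identity $\partial_x\partial_y\log u_n=(\partial_x\partial_yu_n)/u_n-(\partial_xu_n)(\partial_yu_n)/u_n^2$ gives
\[
\partial_x\partial_y\log u_n=-\frac{\partial_xu_n}{u_n}\left(s_{n+1}+\frac{\partial_yu_n}{u_n}\right)-r_n .
\]
Substituting back, the $r_n$ terms cancel and one is left with
\[
\partial_x\partial_y\log\tau_n=-\frac{\partial_xu_n}{u_n}\cdot\frac{(\partial_y+s_{n+1})u_n}{u_n}.
\]
Now the two factors on the right are precisely the images of $u_n$ under $H_n$ and $B_n$: by (\ref{eq:darboux-1}) and the hypotheses $u_{n+1}=H_nu_n$, $u_{n-1}=B_nu_n$, we have $(\partial_y+s_{n+1})u_n=u_{n+1}$ and $\partial_xu_n=-r_nu_{n-1}$, whence
\[
\partial_x\partial_y\log\tau_n=r_n\,\frac{u_{n-1}u_{n+1}}{u_n^2}.
\]
Finally I would invoke $r_n=t_{n+1}t_{n-1}/t_n^2$ to rewrite the right-hand side as $(t_{n+1}u_{n+1})(t_{n-1}u_{n-1})/(t_nu_n)^2=\tau_{n+1}\tau_{n-1}/\tau_n^2$, which is the assertion.

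I do not expect a genuine obstacle here: once the factorization of $\partial_x\partial_y\log u_n$ is spotted the computation is short. The only points needing care are that $r_n$ is nowhere zero — this is the standing assumption (cf.\ Proposition \ref{prop:Backlund-1}) that makes $B_n$, and all the quotients above, well defined — and keeping straight that the data $(s_{n+1},r_n)$ built from $\{t_n\}$ are the same data entering $H_n$ and $B_n$, so that no compatibility condition intervenes. One might also remark that this is exactly the B\"acklund-transformation mechanism described in Section \ref{subsec:Darboux-transformation}.
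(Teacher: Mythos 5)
Your proof is correct and follows essentially the same route as the paper's: both split $\log\tau_n=\log t_n+\log u_n$, use $M_nu_n=0$ together with $H_nu_n=u_{n+1}$ and $B_nu_n=u_{n-1}$ to show $\partial_x\partial_y\log u_n=r_n\,u_{n+1}u_{n-1}/u_n^2-r_n$, and then invoke $r_n=\partial_x\partial_y\log t_n=t_{n+1}t_{n-1}/t_n^2$. Your factored presentation of the intermediate computation is a minor cosmetic difference only.
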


\begin{proof}
By definition, we have
\[
\partial_{x}\partial_{y}\log\tau_{n}=\partial_{x}\partial_{y}\log t_{n}+\partial_{x}\partial_{y}\log u_{n}=r_{n}+\partial_{x}\partial_{y}\log u_{n}.
\]
For this $u_{n}$ we show 
\begin{equation}
\partial_{x}\partial_{y}\log u_{n}=\frac{r_{n}u_{n+1}u_{n-1}}{u_{n}^{2}}-r_{n}.\label{eq:Back-1}
\end{equation}
Noting $\partial_{x}\partial_{y}\log u_{n}=\partial_{x}\partial_{y}u_{n}/u_{n}-\partial_{x}u_{n}\cdot\partial_{y}u_{n}/u_{n}^{2}$
and using $H_{n}=\partial_{y}+s_{n+1},B_{n}=-r_{n}^{-1}\partial_{x}$, we compute
\begin{align*}
\partial_{x}\partial_{y}u_{n} & =-s_{n+1}\partial_{x}u_{n}-r_{n}u_{n}=-s_{n+1}\cdot(-r_{n}B_{n}u_{n})-r_{n}u_{n}\\
 & =r_{n}s_{n+1}u_{n-1}-r_{n}u_{n},\\
\partial_{x}u_{n}\cdot\partial_{y}u_{n} & =(-r_{n}B_{n}u_{n})\cdot(H_{n}u_{n}-s_{n+1}u_{n})\\
 & =-r_{n}u_{n+1}u_{n-1}+r_{n}s_{n+1}u_{n-1}u_{n}.
\end{align*}
Hence we have (\ref{eq:Back-1}). It follows that 
\begin{align*}
\partial_{x}\partial_{y}\log\tau_{n} & =r_{n}\frac{u_{n+1}u_{n-1}}{u_{n}^{2}}=\partial_{x}\partial_{y}\log t_{n}\cdot\frac{u_{n+1}u_{n-1}}{u_{n}^{2}}\\
 & =\frac{t_{n+1}t_{n-1}}{t_{n}^{2}}\cdot\frac{u_{n+1}u_{n-1}}{u_{n}^{2}}=\frac{\tau_{n+1}\tau_{n-1}}{\tau_{n}^{2}}.
\end{align*}
\end{proof}

\subsection{\label{subsec:EPD}Euler-Poisson-Darboux equation and a solution
of the 2dTHE}

To recognize the Gelfand HGF as a particular solution to 2dTHE, it
is important to find a seed solution of 2dTHE. We use the seed solution
arising from the Laplace sequence of the so-called Euler-Poisson-Darboux
equation (EPD equation), which is 

\begin{equation}
M_{0}u:=\left(\partial_{x}\partial_{y}+\frac{\beta}{x-y}\partial_{x}+\frac{\alpha}{y-x}\partial_{y}\right)u=0,\label{eq:epd-0}
\end{equation}
where $\alpha,\beta$ are complex constants. The normal form of $M_{0}$
is 
\begin{equation}
M_{0}'=\partial_{x}\partial_{y}+\frac{\beta-\alpha}{x-y}\partial_{x}+\frac{\alpha(\beta+1)}{(x-y)^{2}}.\label{eq:epd-2}
\end{equation}
 By the process described in Section \ref{subsec:The-sequence-normal},
we can construct the Laplace sequence $\{M_{n}'\}_{n\in\mathbb{Z}}$ starting
from $M_{0}'$ and a solution of 2dTE associated with it. It is easily
seen that 
\begin{align*}
M_{0} & =\left(\partial_{x}+\frac{\alpha}{y-x}\right)\left(\partial_{y}+\frac{\beta}{x-y}\right)-h_{0},\\
 & =\left(\partial_{y}+\frac{\beta}{x-y}\right)\left(\partial_{x}+\frac{\alpha}{y-x}\right)-k_{0},
\end{align*}
where 
\begin{equation}
h_{0}=-\frac{(\alpha+1)\beta}{(x-y)^{2}},\quad k_{0}=-\frac{\alpha(\beta+1)}{(x-y)^{2}}.\label{eq:epd-2-1}
\end{equation}

\begin{lem}
For the Laplace sequence $\{M_{n}'\}_{n\in\mathbb{Z}}$, the invariants $h_{n},k_{n}$
are given by
\begin{equation}
h_{n}=-\frac{(\alpha+n+1)(\beta-n)}{(x-y)^{2}},\quad k_{n}=h_{n-1}.\label{eq:EPD-0}
\end{equation}
\end{lem}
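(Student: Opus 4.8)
The plan is to prove the formula \eqref{eq:EPD-0} by induction on $n$, using the recurrence for the invariants of a Laplace sequence established in Propositions \ref{prop:lap-1} and \ref{prop:lap-2}. First I would settle the base case $n=0$: by \eqref{eq:epd-2-1} we already know $h_0=-(\alpha+1)\beta/(x-y)^2$ and $k_0=-\alpha(\beta+1)/(x-y)^2$, and one checks directly that the proposed formula $h_n=-(\alpha+n+1)(\beta-n)/(x-y)^2$ reproduces $h_0$ at $n=0$, while the companion relation $k_n=h_{n-1}$ gives $k_0 = h_{-1} = -(\alpha)(\beta+1)/(x-y)^2$, matching \eqref{eq:epd-2-1}. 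So the claimed closed form is consistent with the seed data.

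Next, for the inductive step going upward ($n\geq 0$), I would invoke \eqref{eq:laplace-4-1}: $h_{n+1}=2h_n-k_n-\partial_x\partial_y\log h_n$ and $k_{n+1}=h_n$. The second of these is exactly the asserted relation $k_{n+1}=h_{(n+1)-1}$, so nothing is to prove there beyond bookkeeping. For the first, assuming $h_n=-(\alpha+n+1)(\beta-n)/(x-y)^2$ and $k_n=h_{n-1}=-(\alpha+n)(\beta-n+1)/(x-y)^2$, I would compute $\partial_x\partial_y\log h_n$. Since $h_n = c_n\cdot(x-y)^{-2}$ with $c_n$ a constant, $\log h_n = \log c_n - 2\log(x-y)$, hence $\partial_y\log h_n = 2/(x-y)$ and $\partial_x\partial_y\log h_n = 2/(x-y)^2$. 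Then
\[
h_{n+1} = \frac{1}{(x-y)^2}\Bigl(-2(\alpha+n+1)(\beta-n) + (\alpha+n)(\beta-n+1) - 2\Bigr),
\]
and the remaining task is the purely algebraic identity
\[
-2(\alpha+n+1)(\beta-n) + (\alpha+n)(\beta-n+1) - 2 = -(\alpha+n+2)(\beta-n-1),
\]
which is verified by expanding both sides. Downward induction ($n\leq 0$) is handled symmetrically using \eqref{eq:laplace-5-1}: $h_{n-1}=k_n$ is the asserted relation, and $k_{n-1}=2k_n-h_n-\partial_x\partial_y\log k_n$ reduces to the analogous algebraic identity since $k_n$ is also of the form (constant)$\cdot(x-y)^{-2}$.

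I do not anticipate a genuine obstacle here; the only thing requiring care is the polynomial identity in the inductive step and keeping the index shifts straight (the relations $k_n=h_{n-1}$ must be used consistently so that the two-term recurrence \eqref{eq:laplace-4-1} closes on the single family $h_n$). The structural reason the computation goes through cleanly is that every invariant in this Laplace sequence has the same $(x-y)^{-2}$ dependence, so $\partial_x\partial_y\log$ of each is the universal constant $2/(x-y)^2$; the recurrence \eqref{eq:laplace-4-1} then becomes a constant-coefficient recurrence for the numerators, whose solution is the quadratic $-(\alpha+n+1)(\beta-n)$. If one prefers, one can bypass induction entirely by checking that the proposed $h_n$ (together with $k_n=h_{n-1}$) satisfies \eqref{eq:laplace-4-1} and \eqref{eq:laplace-5-1} identically in $n$, which by Propositions \ref{prop:lap-1} and \ref{prop:lap-2} and uniqueness of the Laplace sequence determined by $M_0'$ forces these to be the actual invariants.
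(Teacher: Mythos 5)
Your strategy is essentially the paper's: both arguments reduce the recurrences \eqref{eq:laplace-4-1}, \eqref{eq:laplace-5-1} (with $k_n=h_{n-1}$) to a constant-coefficient recurrence for the numerators, using the fact that every invariant has the form $(\text{const})\cdot(x-y)^{-2}$ so that $\partial_x\partial_y\log h_n$ is a universal multiple of $(x-y)^{-2}$.

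However, there is a concrete sign error that makes your displayed ``purely algebraic identity'' false as written. From $\log h_n=\log c_n-2\log(x-y)$ one gets $\partial_y\log h_n=2/(x-y)$, but then
\[
\partial_x\partial_y\log h_n=\partial_x\Bigl(\frac{2}{x-y}\Bigr)=-\frac{2}{(x-y)^2},
\]
not $+2/(x-y)^2$ (compare the paper's $\partial_x\partial_y\log(x-y)=1/(x-y)^2$). Consequently the term $-\partial_x\partial_y\log h_n$ in \eqref{eq:laplace-4-1} contributes $+2/(x-y)^2$, and the identity you need is
\[
-2(\alpha+n+1)(\beta-n)+(\alpha+n)(\beta-n+1)+2=-(\alpha+n+2)(\beta-n-1),
\]
which does hold; the version you wrote, with $-2$ in place of $+2$, differs from the right-hand side by the constant $4$ and is \emph{not} verified by expansion. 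The same correction is needed in the downward step. Once this sign is fixed, the induction closes and the argument is sound; everything else (base case, the bookkeeping $k_{n+1}=h_n$, and the alternative ``verify the closed form satisfies the recurrences'' route) is fine.
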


\begin{proof}
Since the invariants of $M_{0}$ and $M_{0}'$ are the same and given
by (\ref{eq:epd-2-1}), and since the invariants $h_{n},k_{n}$ of
$M_{n}'$ satisfy the relation 
\begin{equation}
\partial_{x}\partial_{y}\log h_{n}=-h_{n+1}+2h_{n}-h_{n-1},\quad n\in\mathbb{Z}\label{eq:EPD-0-1}
\end{equation}
and $k_{n}=h_{n-1}$, we determine $\{h_{n}\}_{n\in\mathbb{Z}}$ by the recurrence
relation (\ref{eq:EPD-0-1}) with the initial condition (\ref{eq:epd-2-1}).
For $n\geq0$, we use (\ref{eq:EPD-0-1}) in the form
\begin{equation}
(h_{n+1}-h_{n})-(h_{n}-h_{n-1})=-\partial_{x}\partial_{y}\log h_{n}.\label{eq:EPD-1}
\end{equation}
Put $h_{n}=-A_{n}/(x-y)^{2}$, then $A_{0}=(\alpha+1)\beta,$ $A_{-1}=\alpha(\beta+1)$.
Moreover, put $B_{n}:=A_{n}-A_{n-1}\;(n\geq0)$. Since 
\[
\partial_{x}\partial_{y}\log h_{n}=\partial_{x}\partial_{y}\log\left(-\frac{A_{n}}{(x-y)^{2}}\right)=-\frac{2}{(x-y)^{2}},
\]
 (\ref{eq:EPD-1}) reads $B_{n+1}-B_{n}=-2\;(n\geq0)$, and we have
$B_{n}=-2n+B_{0}=-2n+(\beta-\alpha)$, i.e., $A_{n}-A_{n-1}=-2n+(\beta-\alpha)$.
Solving this equation with the initial condition $A_{0}=(\alpha+1)\beta$,
we have $A_{n}=-(n+\alpha+1)(n-\beta)$. For the case $n\leq-1$, we use
(\ref{eq:EPD-1}) in the form
\[
(h_{n-1}-h_{n})-(h_{n}-h_{n+1})=-\partial_{x}\partial_{y}\log h_{n}.
\]
 Solving this recurrence relation for $n$ in the decreasing direction,
we see that $h_{n}$ for $n\leq-1$ are given also by (\ref{eq:EPD-0}).
\end{proof}
\begin{prop}
\label{prop:Laplace-2}Let $M_{0}$ and $M_{0}'$ be given by (\ref{eq:epd-0})
and (\ref{eq:epd-2}) and assume $\alpha,\beta\notin\mathbb{Z}$.

(1) The Laplace sequence $\{M_{n}'\}_{n\in\mathbb{Z}}$ of normal form arising
from $M_{0}'$ is given by
\begin{equation}
M_{n}'=\partial_{x}\partial_{y}+\frac{\beta-\alpha-2n}{x-y}\partial_{x}+\frac{(\alpha+n)(\beta-n+1)}{(x-y)^{2}}\label{eq:EPD-2}
\end{equation}
with the invariants  $h_{n}=-(\alpha+n+1)(\beta-n)/(x-y)^{2},k_{n}=h_{n-1}$.
The EPD operator $M_{n}$ with the normal form $M_{n}'$ is given
by 
\[
M_{n}=\partial_{x}\partial_{y}+\frac{\beta-n}{x-y}\partial_{x}+\frac{\alpha+n}{y-x}\partial_{y}.
\]

(2\textup{) }The solution\textup{ }of the 2dTE\textup{ (\ref{eq:toda-3})
associated with the Laplace sequence $\{M_{n}'\}_{n\in\mathbb{Z}}$ is 
\[
(s_{n+1},r_{n})=\left(\frac{\beta-\alpha-2n}{x-y},\frac{(\alpha+n)(\beta-n+1)}{(x-y)^{2}}\right).
\]
}

(3) \textup{$r_{n}=(\alpha+n)(\beta-n+1)/(x-y)^{2}$ }gives a solution\textup{
}to\textup{ $\partial_{x}\partial_{y}\log r_{n}=r_{n+1}-2r_{n}+r_{n-1}$.}
\end{prop}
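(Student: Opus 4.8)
The plan is to deduce everything from the preceding Lemma, which already gives the invariants $h_{n}=-(\alpha+n+1)(\beta-n)/(x-y)^{2}$, $k_{n}=h_{n-1}$ of the Laplace sequence; the hypothesis $\alpha,\beta\notin\mathbb{Z}$ guarantees $h_{n}\neq0$ for every $n\in\mathbb{Z}$ on the locus $x\neq y$, so the sequence $\{M_{n}'\}_{n\in\mathbb{Z}}$ of normal form is well defined in both directions. All three assertions are then straightforward computations.

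For (1), write $M_{n}'=\partial_{x}\partial_{y}+a_{n}\partial_{x}+c_{n}$ and argue by induction on $n$ starting from $a_{0}=(\beta-\alpha)/(x-y)$, $c_{0}=\alpha(\beta+1)/(x-y)^{2}$, using the recurrences $a_{n+1}=a_{n}-\partial_{y}\log h_{n}$, $c_{n+1}=c_{n}-\partial_{x}a_{n}$ of Proposition \ref{prop:lap-3} (iterated forwards for $n\geq0$ and solved backwards for $n\leq0$). Since the Lemma gives $\log h_{n}=\mathrm{const}-2\log(x-y)$, one has $\partial_{y}\log h_{n}=2/(x-y)$ regardless of $n$, so the first recurrence collapses to $a_{n+1}=a_{n}-2/(x-y)$ and yields $a_{n}=(\beta-\alpha-2n)/(x-y)$; substituting into the second recurrence and summing the arithmetic series gives $c_{n}=\bigl[\alpha(\beta+1)+\sum_{m=0}^{n-1}(\beta-\alpha-2m)\bigr]/(x-y)^{2}=(\alpha+n)(\beta-n+1)/(x-y)^{2}$, which is (\ref{eq:EPD-2}). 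The stated invariants re-emerge from (\ref{eq:laplace-8}) of Lemma \ref{lem:norm-1}, since for a normal-form operator $h_{n}=\partial_{x}a_{n}-c_{n}$ and $k_{n}=-c_{n}$. To obtain the EPD form of $M_{n}$, apply Lemma \ref{lem:norm-1} to $M_{n}=\partial_{x}\partial_{y}+\frac{\beta-n}{x-y}\partial_{x}+\frac{\alpha+n}{y-x}\partial_{y}$ (so $a=\frac{\beta-n}{x-y}$, $b=\frac{\alpha+n}{y-x}$, $c=0$) with $F=(\alpha+n)\log(x-y)$, so that $b+\partial_{x}F=0$, and check term by term that $a'=a+F_{y}=\frac{\beta-\alpha-2n}{x-y}$ and $c'=aF_{x}+bF_{y}+F_{x,y}+F_{x}F_{y}=\frac{(\alpha+n)(\beta-n+1)}{(x-y)^{2}}$ coincide with the coefficients of $M_{n}'$; since the invariants are preserved under this reduction (Lemma \ref{lem:normal-form}), $M_{n}$ is an EPD operator with normal form $M_{n}'$.

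Assertion (2) is then immediate: as $\{M_{n}'\}_{n\in\mathbb{Z}}$ is a Laplace sequence in normal form, Section \ref{subsec:2DTE-laplace} shows that the pair $(s_{n+1},r_{n})$ obtained by writing $M_{n}'=\partial_{x}\partial_{y}+s_{n+1}\partial_{x}+r_{n}$ solves (\ref{eq:toda-3}), and (1) identifies these as $s_{n+1}=(\beta-\alpha-2n)/(x-y)$, $r_{n}=(\alpha+n)(\beta-n+1)/(x-y)^{2}$. One may also verify (\ref{eq:toda-3}) directly: writing $a=\alpha+n$, $b=\beta-n$, one has $\partial_{x}s_{n+1}=(a-b)/(x-y)^{2}=r_{n}-r_{n+1}$ and $\partial_{y}\log r_{n}=2/(x-y)=s_{n}-s_{n+1}$. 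Assertion (3) is the content of (\ref{eq:toda-2}), which follows from (\ref{eq:toda-3}) by differentiating the second equation in $x$ and using the first; concretely $\partial_{x}\partial_{y}\log r_{n}=-2/(x-y)^{2}$ and, with the same abbreviations, $r_{n+1}-2r_{n}+r_{n-1}=\bigl[(a+1)b-2a(b+1)+(a-1)(b+2)\bigr]/(x-y)^{2}=-2/(x-y)^{2}$.

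There is no real obstacle; the proposition is a bookkeeping exercise whose only delicate point is the recovery of $a_{n}$ from $\partial_{x}a_{n}$ in (1), where integration introduces an additive function of $y$. This is pinned down to zero by the initial datum $a_{0}=(\beta-\alpha)/(x-y)$ inherited from $M_{0}'$ together with the compatibility relation $\partial_{y}\log r_{n}=s_{n}-s_{n+1}$, which must be invoked at each level in both the increasing and the decreasing direction of $n$.
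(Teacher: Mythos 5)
Your proposal is correct and follows exactly the route the paper intends: the proposition is left unproved there precisely because it is the combination of the preceding Lemma (the invariants $h_n$) with the explicit recurrences of Proposition \ref{prop:lap-3} and the identifications of Section \ref{subsec:2DTE-laplace}, and all of your computations check out. The only quibble is your closing remark about recovering $a_n$ from $\partial_x a_n$: no integration actually occurs, since $a_{n+1}=a_n-\partial_y\log h_n$ and $c_{n+1}=c_n-\partial_x a_n$ determine the coefficients explicitly in both directions of $n$, so there is no additive function of $y$ to pin down.
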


As for the 2dTHE (\ref{eq:toda-1}), we have the following.
\begin{prop}
\label{prop:Laplace-3}Assume $\alpha,\beta\notin\mathbb{Z}$. Then the Laplace sequence
$\{M_{n}'\}_{n\in\mathbb{Z}}$ given by (\ref{eq:EPD-2}) provides a solution
\[
t_{n}(x,y)=B(\alpha,\beta;n)(x-y)^{p(\alpha,\beta;n)}
\]
of the 2dTHE (\ref{eq:toda-1}), where 
\[
p(\alpha,\beta;n)=(\alpha+n)(\beta-n+1),
\]
and $B(\alpha,\beta;n)$ is given, under the condition $B(0)=1,B(1)=A$ with
an arbitrary constant $A$, by
\[
B(\alpha,\beta;n)=\begin{cases}
A^{n}\prod_{k=0}^{n-1}\left(\prod_{l=1}^{k}p(\alpha,\beta;l)\right), & n\geq2,\\
A^{n}\prod_{k=1}^{|n|}\left(\prod_{l=-k+1}^{0}p(\alpha,\beta;l)\right), & n\leq-1.
\end{cases}
\]
\end{prop}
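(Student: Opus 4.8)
The plan is to verify the 2dTHE (\ref{eq:toda-1}) directly for the sequence $t_n(x,y) = B(\alpha,\beta;n)\,(x-y)^{p(\alpha,\beta;n)}$, splitting the verification into an analytic part (the two sides as functions of $x,y$) and a combinatorial part (the scalars $B(\alpha,\beta;n)$). For the left-hand side, since $B(\alpha,\beta;n)$ is independent of $x,y$ and $\partial_y\log(x-y) = -1/(x-y)$, applying a further $\partial_x$ gives
\[
\partial_x\partial_y\log t_n = p(\alpha,\beta;n)\,\partial_x\partial_y\log(x-y) = \frac{p(\alpha,\beta;n)}{(x-y)^2},
\]
which is exactly the invariant $r_n$ of Proposition \ref{prop:Laplace-2}(2) (consistent with $r_n = \partial_x\partial_y\log\tau_n$ of Proposition \ref{prop:Toda-2}). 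Moreover, since $\alpha,\beta\notin\mathbb{Z}$ we have $p(\alpha,\beta;n) = (\alpha+n)(\beta-n+1)\neq 0$ for all $n\in\mathbb{Z}$, so all the $B(\alpha,\beta;n)$ and the ratios appearing below are nonzero.

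For the right-hand side I would write
\[
\frac{t_{n+1}t_{n-1}}{t_n^2} = \frac{B(\alpha,\beta;n+1)\,B(\alpha,\beta;n-1)}{B(\alpha,\beta;n)^2}\,(x-y)^{\,p(\alpha,\beta;n+1)-2p(\alpha,\beta;n)+p(\alpha,\beta;n-1)}.
\]
Since $p(\alpha,\beta;n) = -n^2 + (\beta+1-\alpha)n + \alpha(\beta+1)$ is quadratic in $n$ with leading coefficient $-1$, its second difference is the constant $-2$, so the power of $(x-y)$ is precisely $-2$. Hence the whole proposition reduces to the scalar identity
\[
\frac{B(\alpha,\beta;n+1)\,B(\alpha,\beta;n-1)}{B(\alpha,\beta;n)^2} = p(\alpha,\beta;n),\qquad n\in\mathbb{Z};
\]
given this, the right-hand side of (\ref{eq:toda-1}) equals $p(\alpha,\beta;n)/(x-y)^2 = \partial_x\partial_y\log t_n$.

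To prove the scalar identity I would set $C(n) = B(\alpha,\beta;n)/B(\alpha,\beta;n-1)$, so that the identity becomes the first-order recurrence $C(n+1) = p(\alpha,\beta;n)\,C(n)$. The normalization $B(\alpha,\beta;0)=1$, $B(\alpha,\beta;1)=A$ gives $C(1)=A$; solving the recurrence upward gives $C(n)=A\prod_{l=1}^{n-1}p(\alpha,\beta;l)$ for $n\ge 1$, and telescoping, $B(\alpha,\beta;n)=\prod_{k=1}^n C(k) = A^n\prod_{k=1}^n\prod_{l=1}^{k-1}p(\alpha,\beta;l)$, which is the asserted product formula for $n\ge 0$ after the shift $k\mapsto k+1$ in the outer product. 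Solving the same recurrence downward from $C(1)=A$ determines $C(0),C(-1),\dots$, and telescoping again yields the asserted formula for $n\le -1$.

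The steps are all routine; the only thing needing care is the bookkeeping with the nested finite products and the matching of the two branches of the piecewise definition of $B(\alpha,\beta;n)$ across $n=0,1$ — i.e., checking that they really assemble into a single two-sided solution of $C(n+1)=p(\alpha,\beta;n)C(n)$. There is no conceptual obstacle: $B(\alpha,\beta;n)$ has been chosen precisely so that the $x,y$-independent factors balance against the constant $-2$ second difference, and the residual freedom $B(\alpha,\beta;0)=1$, $B(\alpha,\beta;1)=A$ reflects the gauge freedom $\tau_n\mapsto E\,D^n\tau_n$ (with constants $D,E$) of the 2dTHE.
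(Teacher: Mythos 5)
Your proposal is correct and follows essentially the same route as the paper: both reduce the 2dTHE to the scalar recurrence $B(n+1)B(n-1)/B(n)^{2}=p(\alpha,\beta;n)$ via the observation that the second difference of $p(\alpha,\beta;n)$ is $-2$, and both solve that recurrence by passing to the ratios $B(n)/B(n-1)$ and telescoping. The only cosmetic difference is that the paper first derives the ansatz $t_{n}=B(n)(x-y)^{p(n)}$ from $\partial_{x}\partial_{y}\log t_{n}=r_{n}$, whereas you verify the stated formula directly.
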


\begin{proof}
Let us determine $t_{n}$ by the equation
\begin{equation}
\partial_{x}\partial_{y}\log t_{n}=\frac{t_{n+1}t_{n-1}}{t_{n}^{2}}.\label{eq:EPD-3}
\end{equation}
Recall that $r_{n}$ and $t_{n}$ are related as 
\begin{equation}
\partial_{x}\partial_{y}\log t_{n}=r_{n}=\frac{p(n)}{(x-y)^{2}},\quad p(n)=(\alpha+n)(\beta-n+1).\label{eq:EPD-6}
\end{equation}
Noting $\partial_{x}\partial_{y}\log(x-y)=1/(x-y)^{2}$, the condition (\ref{eq:EPD-6})
is written as $\partial_{x}\partial_{y}\log t_{n}=p(n)\partial_{x}\partial_{y}\log(x-y)$,
namely, $\partial_{x}\partial_{y}\left(\log t_{n}-p(n)\log(x-y)\right)=0$.
So we find $t_{n}$ in the form $t_{n}=B(n)(x-y)^{p(n)}$, where $B(n)$
is the constant independent of $x,y$. Put this expression in the
equation (\ref{eq:EPD-3}) and note that $p(n+1)-2p(n)+p(n-1)=-2$,
then we have 
\begin{equation}
\frac{B(n+1)B(n-1)}{B(n)^{2}}=p(n).\label{eq:EPD-5}
\end{equation}
To determine $B(n)$ for $n\geq2$ under the condition $B(0)=1,B(1)=A$,
we use (\ref{eq:EPD-5}) in the form
\begin{equation}
\frac{B(n+1)}{B(n)}/\frac{B(n)}{B(n-1)}=p(n).\label{eq:EPD-4}
\end{equation}
It follows that 
\[
\frac{B(n+1)}{B(n)}=\frac{B(1)}{B(0)}\prod_{k=1}^{n}p(k)=A\prod_{k=1}^{n}p(k),
\]
Thus we obtain $B(n)$ as 
\[
B(n)=\prod_{k=0}^{n-1}\left(A\prod_{l=1}^{k}p(l)\right)=(-1)^{\frac{n(n-1)}{2}}A^{n}\prod_{k=1}^{n-1}(\alpha+1)_{k}(-\beta)_{k}.
\]
 To determine $B(n)$ for $n\leq-1$, we use (\ref{eq:EPD-5}) in
the form 
\[
\frac{B(n-1)}{B(n)}/\frac{B(n)}{B(n+1)}=p(n).
\]
Then in a similar way as in the case $n\geq2$, we have the expression
of $B(n)$ for $n\leq-1$ as given in the proposition.
\end{proof}

\section{EPD arising from the Gelfand HGF}

In this section, we recall the facts on the Gelfand HGF on the complex
Grassmannian manifold $\mathrm{GM}(2,N)$ and show that the system of EPD
equations are obtained naturally from the system of differential equations
characterizing the Gelfand HGF as a consequence of reduction by the
action of Cartan subgroup of $\mathrm{GL}(N) $.

\subsection{Definition of Gelfand's HGF}

Let $N\geq3$ be an integer, $G=\mathrm{GL}(N)$ be the complex general linear
group consisting of nonsingular matrices of size $N$, and let $H$
be the Cartan subgroup of $G$:
\[
H=\left\{ h=\mathrm{diag}(h_{1},\dots,h_{N})\mid h_{i}\in\mathbb{C}^{\times}\right\} \subset G.
\]
The Lie algebra of $G$ and $H$ will be denoted as $\mathfrak{g}$ and $\mathfrak{h}$,
respectively. $\mathfrak{h}$ is a Cartan subalgebra of $\mathfrak{g}$. We restrict
ourselves to consider the Gelfand HGF defined by $1$-dimensional
integral to discuss its relation to the 2dTHE. The Gelfand HGF is
defined as a Radon transform of a character of the universal covering
group $\tilde{H}$ of $H$. Since $H\simeq(\mathbb{C}^{\times})^{N}$ and a character
of $\tilde{\mathbb{C}}^{\times}$ is given by a complex power function $x\mapsto x^{a}$
for some $a\in\mathbb{C}$, the characters of $\tilde{H}$ are given as follows.
\begin{lem}
A character $\chi:\tilde{H}\to\mathbb{C}^{\times}$ is given by 
\[
\chi(h;\alpha)=\prod_{j=1}^{N}h_{j}^{\alpha_{j}},\quad h=\mathrm{diag}(h_{1},\dots,h_{N})
\]
 for some $\alpha=(\alpha_{1},\dots,\alpha_{N})\in\mathbb{C}^{N}.$
\end{lem}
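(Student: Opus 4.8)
The plan is to reduce everything to the structure of the universal cover of $\mathbb{C}^{\times}$. First I would record that $H\simeq(\mathbb{C}^{\times})^{N}$ via $h\mapsto(h_{1},\dots,h_{N})$, so that the universal covering group is $\tilde{H}\simeq\mathbb{C}^{N}$ with covering homomorphism $p:\mathbb{C}^{N}\to H$, $p(t_{1},\dots,t_{N})=\mathrm{diag}(e^{t_{1}},\dots,e^{t_{N}})$; here one uses that $\exp:\mathbb{C}\to\mathbb{C}^{\times}$ is the universal cover of $\mathbb{C}^{\times}$ and that a finite product of universal covers is the universal cover of the product. A character is a holomorphic homomorphism $\chi:\tilde{H}\to\mathbb{C}^{\times}$; identifying $\tilde{H}$ with $\mathbb{C}^{N}$ it becomes a holomorphic homomorphism $\chi:(\mathbb{C}^{N},+)\to(\mathbb{C}^{\times},\times)$.

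Next I would lift $\chi$ through $\exp$ once more. Since $\mathbb{C}^{N}$ is simply connected, the lifting criterion for the covering $\exp:\mathbb{C}\to\mathbb{C}^{\times}$ produces a holomorphic map $\psi:\mathbb{C}^{N}\to\mathbb{C}$ with $\exp\circ\psi=\chi$ and $\psi(0)=0$ (holomorphy of the lift is automatic, as $\exp$ is a local biholomorphism); uniqueness of the lift fixing the base point forces $\psi$ to again be a homomorphism, by applying that uniqueness to the two lifts $t\mapsto\psi(t+s)$ and $t\mapsto\psi(t)+\psi(s)$ of $t\mapsto\chi(t)\chi(s)$, which agree at $t=0$. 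So the problem is reduced to classifying holomorphic additive homomorphisms $\psi:\mathbb{C}^{N}\to\mathbb{C}$.

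Then I would observe that such a $\psi$ is nothing but a $\mathbb{C}$-linear functional: the additive group $\mathbb{C}^{N}$ is its own Lie algebra with the identity as exponential map, so $\psi$ coincides with its differential $d\psi_{0}$, which is $\mathbb{C}$-linear because $\psi$ is holomorphic; hence $\psi(t)=\sum_{j=1}^{N}\alpha_{j}t_{j}$ with $\alpha_{j}=(\partial\psi/\partial t_{j})(0)\in\mathbb{C}$ (alternatively, restrict to each coordinate line and use that a holomorphic $f:\mathbb{C}\to\mathbb{C}$ with $f(s+s')=f(s)+f(s')$ is linear, then sum). Consequently $\chi(p(t))=\exp\bigl(\sum_{j}\alpha_{j}t_{j}\bigr)=\prod_{j}(e^{t_{j}})^{\alpha_{j}}$, and writing $h=p(t)\in\tilde{H}$ with $h_{j}=e^{t_{j}}$ and the branch of the power fixed by $h_{j}^{\alpha_{j}}:=e^{\alpha_{j}t_{j}}$, this is exactly $\chi(h;\alpha)=\prod_{j=1}^{N}h_{j}^{\alpha_{j}}$. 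For completeness I would note the converse — every $\alpha\in\mathbb{C}^{N}$ does define such a character, and the formula is well posed on $\tilde{H}$ precisely because the power $h_{j}^{\alpha_{j}}$ is evaluated on the cover — but this is immediate and is not needed for the stated direction.

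The only point that deserves attention, as opposed to the routine covering-space formalities, is the convention that ``character'' means \emph{holomorphic} homomorphism: a merely continuous additive homomorphism $\mathbb{C}^{N}\to\mathbb{C}$ is only $\mathbb{R}$-linear and would yield the larger family $\prod_{j}h_{j}^{\alpha_{j}}\bar{h}_{j}^{\beta_{j}}$ on the cover. With holomorphy built into the definition there is no real obstacle, and the argument is a short chain of standard facts about covering groups and one-parameter subgroups.
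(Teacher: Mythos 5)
Your argument is correct and follows the same route the paper indicates (the paper states this lemma without proof, justifying it only by the remark that $H\simeq(\mathbb{C}^{\times})^{N}$ and that a character of $\tilde{\mathbb{C}}^{\times}$ is a complex power $x\mapsto x^{a}$). You have simply supplied the standard covering-space details behind that remark — lifting through $\exp$, showing the lift is an additive homomorphism, and using holomorphy to get $\mathbb{C}$-linearity — all of which is sound, including your closing caveat that holomorphy is what rules out the anti-holomorphic factors $\bar{h}_{j}^{\beta_{j}}$.
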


Note that $\alpha$ is regarded as an element of $\mathfrak{h}^{*}=\mathrm{Hom}_{\mathbb{C}}(\mathfrak{h},\mathbb{C})$
such that $\alpha(E_{p,p})=\alpha_{p}$ for the $(p,p)$-th matrix unit $E_{p,p}\in\mathfrak{h}$.
For the character $\chi(\cdot;\alpha)$ we assume the conditions $\alpha_{j}\notin\mathbb{Z}\;(1\leq j\leq N)$
and
\begin{equation}
\alpha_{1}+\cdots+\alpha_{N}=-2.\label{eq:parameter-cond}
\end{equation}
Let $z\in\mathrm{Mat}(2,N)$ be written as $z=(z_{1},\dots,z_{N})$ with the
column vectors $z_{j}=\,^{t}(z_{1,j},z_{2,j})\in\mathbb{C}^{2}$. The Zariski
open subset $Z\subset\mathrm{Mat}(2,N)$, called the generic stratum with
respect to $H$, is defined by
\[
Z=\{z\in\mathrm{Mat}(2,N)\mid\det(z_{i},z_{j})\neq0,\quad1\leq\forall i\neq j\leq N\}.
\]
Any $z\in Z$ gives $N$ linear polynomials of $t=(t_{1},t_{2})$:
\[
tz_{j}=t_{1}z_{1,j}+t_{2}z_{2,j}=tz_{j},\quad1\leq j\leq N,
\]
where $t$ is considered as the homogeneous coordinates of the complex
projective space $\mathbb{P}^{1}$. The point of $\mathbb{P}^{1}$ with the homogeneous
coordinate $t$ will be denoted by $[t]$. Let $p_{j}(z)$ be the
zero of $tz_{j}$ in $\mathbb{P}^{1}$. Then, we see that $p_{1}(z),\dots,p_{N}(z)$
are distinct points in $\mathbb{P}^{1}$ by virtue of the condition $\det(z_{i},z_{j})\neq0,\;1\leq i\neq j\leq N$.
Identifying $tz$ with the diagonal matrix $\mathrm{diag}(tz_{1},\dots,tz_{N})\in H,$
define the Gelfand HGF by 
\begin{equation}
F(z,\alpha;C)=\int_{C(z)}\chi(tz;\alpha)\cdot\tau(t)=\int_{C(z)}\prod_{1\leq j\leq N}(tz_{j})^{\alpha_{j}}\cdot\tau(t),\label{eq:def-HGF}
\end{equation}
where $\tau(t)=t_{1}dt_{2}-t_{2}dt_{1}=t_{1}^{2}d(t_{2}/t_{1})$ and
$C(z)$ is a path connecting two points $p_{i}(z$) and $p_{j}(z)$
for example, which gives a cycle of the homology group $H_{1}^{lf}(X_{z};\mathcal{S})$
of locally finite chains of $X_{z}=\mathbb{P}^{1}\setminus\{p_{1}(z),\dots,p_{N}(z)\}$
with coefficients in the local system $\mathcal{S}$ determined by $\chi(tz,\alpha)\tau(t)$.
Put 
\[
\vec{u}=(1,u),\quad u=t_{2}/t_{1}.
\]
Then $u$ gives the affine coordinate in $U=\{[t]\in\mathbb{P}^{1}\mid t_{1}\neq0\}\simeq\mathbb{C}$.
By the condition (\ref{eq:parameter-cond}), the function $F$ defined
by (\ref{eq:def-HGF}) can be written as 
\[
F(z,\alpha;C)=\int_{C(z)}\chi(\vec{u}z;\alpha)du=\int_{C(z)}\prod_{1\leq j\leq N}(z_{1,j}+z_{2,j}u)^{\alpha_{j}}du.
\]
It is easy to check that we can define the action of $\mathrm{GL}(2) \times H$
on $Z$ by 
\begin{gather}
\mathrm{GL}(2) \times Z\times H\longrightarrow Z.\label{eq:action}\\
\qquad(g,z,h)\qquad\mapsto gzh\nonumber 
\end{gather}
Then we have the covariance property of the Gelfand HGF with respect
to the action $\mathrm{GL}(2) \curvearrowright Z\curvearrowleft H$ as follows.
See \cite{Gelfand}.
\begin{prop}
\label{prop:covariance}We have
\begin{align}
F(zh,\alpha;C) & =\chi(h,\alpha)F(z,\alpha;C),\qquad h\in H,\label{eq:cova-1}\\
F(gz,\alpha;C) & =(\det g)^{-1}F(z,\alpha;\tilde{C}),\qquad g\in\mathrm{GL}(2) ,\label{eq:cova-2}
\end{align}
where $\tilde{C}=\{\tilde{C}(z)\}$ is obtained from $C(z)$ as its
image by the projective transformation $\mathbb{P}^{1}\ni[t]\mapsto[s]:=[tg]\in\mathbb{P}^{1}$.
\end{prop}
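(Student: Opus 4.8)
The plan is to derive both relations by substituting directly into the integral representation
\[
F(z,\alpha;C)=\int_{C(z)}\prod_{1\le j\le N}(tz_j)^{\alpha_j}\,\tau(t),\qquad \tau(t)=t_1\,dt_2-t_2\,dt_1,
\]
of (\ref{eq:def-HGF}): for (\ref{eq:cova-1}) I would simply factor scalar constants out of the integrand, and for (\ref{eq:cova-2}) I would carry out the change of integration variable induced by the projective transformation $[t]\mapsto[tg]$, whose only non-routine ingredient is the transformation law of the $1$-form $\tau$. I would first record that the standing assumption $\alpha_1+\cdots+\alpha_N=-2$ of (\ref{eq:parameter-cond}) is precisely what makes $\prod_j(tz_j)^{\alpha_j}\tau(t)$ descend to a well-defined object on $\mathbb{P}^1$, since under $t\mapsto\lambda t$ it scales by $\lambda^{\sum_j\alpha_j+2}=1$; this justifies the manipulations that follow.

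For (\ref{eq:cova-1}): writing $h=\mathrm{diag}(h_1,\dots,h_N)$, the $j$-th column of $zh$ is $h_jz_j$, so $t(zh)_j=h_j(tz_j)$ and $(t(zh)_j)^{\alpha_j}=h_j^{\alpha_j}(tz_j)^{\alpha_j}$. Taking the product over $j$ pulls out the constant $\prod_jh_j^{\alpha_j}=\chi(h;\alpha)$ and leaves $\tau(t)$ untouched. Since rescaling a linear form does not move its zero, $p_j(zh)=p_j(z)$ for all $j$, hence $X_{zh}=X_z$ and the local system $\mathcal{S}$ is unchanged (the constant $\chi(h;\alpha)$ has trivial monodromy); so the same cycle $C(z)$ serves for $zh$, and $F(zh,\alpha;C)=\chi(h;\alpha)F(z,\alpha;C)$ follows by pulling the constant out of the integral.

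For (\ref{eq:cova-2}): the $j$-th column of $gz$ is $gz_j$, so $t(gz)_j=(tg)z_j$. I would set $s=tg$ and view $\phi_g\colon [t]\mapsto [s]=[tg]$ as a projective automorphism of $\mathbb{P}^1$. A short determinant computation gives $\tau(tg)=(\det g)\,\tau(t)$, equivalently $\tau(t)=(\det g)^{-1}\tau(s)$ when $t=sg^{-1}$. Moreover $p_j(gz)$, the zero of $[t]\mapsto(tg)z_j$, is the point with $[tg]=p_j(z)$, so $\phi_g$ maps $X_{gz}$ biholomorphically onto $X_z$ and intertwines the two local systems; in particular it carries the cycle $C(gz)$ to a cycle $\tilde C(z):=\phi_g(C(gz))$ of $X_z$, which is the family $\tilde C$ appearing in the statement. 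Changing variables $t\mapsto s=tg$ then gives
\[
F(gz,\alpha;C)=\int_{C(gz)}\prod_j\big((tg)z_j\big)^{\alpha_j}\tau(t)=(\det g)^{-1}\int_{\tilde C(z)}\prod_j(sz_j)^{\alpha_j}\tau(s)=(\det g)^{-1}F(z,\alpha;\tilde C).
\]

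I expect the only genuinely delicate point to be the topological bookkeeping: one has to fix the family $C$ compatibly with the $\mathrm{GL}(2)$-action and verify that the locally finite homology classes $C(gz)$ and $\tilde C(z)$ really correspond under $\phi_g$, with matching local systems, orientations, and behaviour near the punctures $p_1,\dots,p_N$. The algebraic content — the scalar factorization in the first case, the identity $\tau(tg)=(\det g)\tau(t)$, and the change of variables in the second — is entirely routine.
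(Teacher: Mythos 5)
Your proof is correct: the paper itself gives no argument for this proposition (it simply defers to the reference \cite{Gelfand}), and your computation --- factoring $\chi(h;\alpha)$ out of the integrand for the $H$-action, and using the identity $\tau(tg)=(\det g)\,\tau(t)$ together with the substitution $s=tg$ for the $\mathrm{GL}(2)$-action --- is the standard argument, with the homogeneity check $\lambda^{\sum_j\alpha_j+2}=1$ correctly identified as the reason the integrand descends to $\mathbb{P}^1$. Your definition $\tilde C(z)=\phi_g(C(gz))$ is in fact the more careful reading of the paper's loosely worded description of $\tilde C$, so nothing is missing.
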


Hereafter we write $F(z;\alpha)$ or $F(z)$ for $F(z,\alpha;C)$ for the
sake of simplicity. Let $\mathfrak{gl}(2)$ denote the Lie algebras of $\mathrm{GL}(2) $.
The following result is well known.
\begin{prop}
\label{prop:Gelfand-eq} The Gelfand HGF $F(z;\alpha)$ satisfies the
differential equations:

\begin{align}
\square_{p,q}F=(\partial_{1,p}\partial_{2,q}-\partial_{2,p}\partial_{1,q})F & =0,\quad1\leq p,q\leq N,\label{eq:gel-eq-1}\\
\left(\mathrm{Tr}(\,^{t}(zX)\partial)-\alpha(X)\right)F & =0,\quad X\in\mathfrak{h},\label{eq:gel-eq-2}\\
\left(\mathrm{Tr}(\,^{t}(Yz)\partial)+\mathrm{Tr}(Y)\right)F & =0,\quad Y\in\mathfrak{gl}(2),\label{eq:gel-eq-3}
\end{align}
where $\partial_{i,p}=\partial/\partial z_{i,p}$.
\end{prop}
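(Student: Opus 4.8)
The strategy is to verify each of the three families of equations by differentiating under the integral sign in the representation
\[
F(z;\alpha)=\int_{C(z)}\prod_{j=1}^{N}(z_{1,j}+z_{2,j}u)^{\alpha_j}\,du,
\]
and checking that the resulting integrand is either identically zero or a total $u$-derivative of a function that vanishes at the endpoints of $C(z)$ (so the boundary term drops, using that $C(z)$ is a cycle in the relevant twisted homology). Write $\ell_j=\ell_j(z,u)=z_{1,j}+z_{2,j}u$ and $\varphi=\prod_j \ell_j^{\alpha_j}$, so $F=\int_{C}\varphi\,du$. Note the basic facts $\partial_{1,p}\ell_j=\delta_{jp}$, $\partial_{2,p}\ell_j=\delta_{jp}u$, hence $\partial_{1,p}\varphi=(\alpha_p/\ell_p)\varphi$ and $\partial_{2,p}\varphi=(\alpha_p u/\ell_p)\varphi$; also $\partial_u\varphi=\bigl(\sum_j \alpha_j z_{2,j}/\ell_j\bigr)\varphi$.

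For \eqref{eq:gel-eq-1}: applying $\partial_{1,p}\partial_{2,q}$ and $\partial_{2,p}\partial_{1,q}$ to $\varphi$ and using the formulas above, both second derivatives equal $(\alpha_p\alpha_q u/(\ell_p\ell_q))\varphi$ when $p\neq q$ (and when $p=q$ the two expressions agree as well, since then $\square_{p,p}$ is symmetric in the indices), so $\square_{p,q}\varphi\equiv 0$ pointwise; integrating gives \eqref{eq:gel-eq-1} with no boundary term at all. For \eqref{eq:gel-eq-2}, take $X=E_{p,p}$, so $\alpha(X)=\alpha_p$ and $zX$ has $p$-th column $z_p$ and all others zero; then $\mathrm{Tr}(\,^t(zX)\partial)=z_{1,p}\partial_{1,p}+z_{2,p}\partial_{2,p}$, and applied to $\varphi$ this gives $(\alpha_p(z_{1,p}+z_{2,p}u)/\ell_p)\varphi=\alpha_p\varphi$, so the integrand of $(\mathrm{Tr}(\,^t(zX)\partial)-\alpha(X))F$ vanishes identically. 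By linearity in $X$ this settles \eqref{eq:gel-eq-2}. The only slightly delicate case is \eqref{eq:gel-eq-3}: for $Y\in\mathfrak{gl}(2)$ the operator $\mathrm{Tr}(\,^t(Yz)\partial)$ is a linear combination of the $\partial/\partial z_{i,p}$ with coefficients built from $z$, and a short computation shows that $\mathrm{Tr}(\,^t(Yz)\partial)\varphi$ equals $\bigl(\sum_j \alpha_j\,(\text{linear in }u)/\ell_j\bigr)\varphi$; one recognizes this, after adding $\mathrm{Tr}(Y)\,\varphi$ and using $\alpha_1+\cdots+\alpha_N=-2$, as a total derivative $\partial_u\bigl(g(z,u)\varphi\bigr)$ for an explicit rational (in fact polynomial in $u$ of degree $\le 1$) function $g$ coming from the infinitesimal projective action of $\mathfrak{gl}(2)$ on $u=t_2/t_1$ together with the Jacobian factor in $du$. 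Since $g\,\varphi$ vanishes at the branch points $p_i(z)$ (thanks to $\alpha_j\notin\mathbb Z$), integrating over the cycle $C(z)$ kills the boundary term and yields \eqref{eq:gel-eq-3}.

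The main obstacle is \eqref{eq:gel-eq-3}: one must correctly bookkeep how the $\mathrm{GL}(2)$-action on $Z$ interacts with the passage from the homogeneous integrand $\chi(tz;\alpha)\tau(t)$ to the affine integrand $\varphi\,du$, i.e. identify the right $g(z,u)$ so that $\mathrm{Tr}(\,^t(Yz)\partial)\varphi+\mathrm{Tr}(Y)\varphi=\partial_u(g\varphi)$; the cleanest route is to differentiate the covariance relation \eqref{eq:cova-2} of Proposition \ref{prop:covariance} at $g=I$ along $Y$, which produces \eqref{eq:gel-eq-3} directly once one accounts for the effect of the change of cycle $C\mapsto\tilde C$ (this effect is first order and contributes exactly the $\mathrm{Tr}(Y)$ term, together with the fact that the integrand, being a closed twisted form, has a well-defined integral over the homology class). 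Equations \eqref{eq:gel-eq-1} and \eqref{eq:gel-eq-2} are then routine pointwise identities on the integrand requiring no boundary analysis.
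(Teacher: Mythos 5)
Your proof is correct in substance. The paper itself offers no proof of Proposition \ref{prop:Gelfand-eq}: it is stated as ``well known,'' with only the remark that (\ref{eq:gel-eq-2}) and (\ref{eq:gel-eq-3}) are the infinitesimal forms of the covariances (\ref{eq:cova-1}) and (\ref{eq:cova-2}). Your primary route (direct differentiation under the integral sign, with the integrand of (\ref{eq:gel-eq-1}) and (\ref{eq:gel-eq-2}) vanishing pointwise and that of (\ref{eq:gel-eq-3}) being an exact twisted form) is the standard verification and fills the gap the paper leaves; your secondary suggestion for (\ref{eq:gel-eq-3}), differentiating (\ref{eq:cova-2}) at the identity, is exactly the paper's implicit justification. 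Two small corrections. First, for $Y=E_{2,1}$ the operator is $\sum_q z_{1,q}\partial_{2,q}$, and writing $z_{1,q}=\ell_q-uz_{2,q}$ one finds the integrand equals $-\partial_u(u^2\varphi)$ once $\sum_j\alpha_j=-2$ is used; so $g$ is quadratic in $u$, not of degree $\le 1$ as you claim (this is the generator whose infinitesimal M\"obius vector field is $-u^2\partial_u$, and it is one of the two generators, with $E_{1,1}$, where the normalization $\sum\alpha_j=-2$ is actually needed). Second, in the covariance route the $\mathrm{Tr}(Y)$ term comes from differentiating the Jacobian factor $(\det g)^{-1}$, while the deformation of the cycle $C\mapsto\tilde C$ contributes nothing to first order because the twisted homology class is locally constant; your phrasing attributes the $\mathrm{Tr}(Y)$ term to the change of cycle, which conflates these two effects. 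Neither point affects the validity of the argument.
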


The above system of differential equations will be called the \emph{Gelfand
hypergeometric system} (Gelfand HGS). The meaning of these differential
equations is as follows. For $X\in\mathfrak{g}$ and for a function $f$ on
$Z$, define the differential operator $L_{X}$ on $Z$ by 
\begin{equation}
L_{X}f:=\frac{d}{ds}f(z\exp sX)|_{s=0}=\mathrm{Tr}(\,^{t}(zX)\partial)\,f.\label{eq:gel-eq-4}
\end{equation}
Then we see that equation (\ref{eq:gel-eq-2}) is the infinitesimal
form of the property (\ref{eq:cova-1}). If we put $X=E_{p,p}$, the
$(p,p)$-th matrix unit, then $L_{p}:=L_{E_{p,p}}=z_{1,p}\partial_{1,p}+z_{2,p}\partial_{2,p}$
and (\ref{eq:gel-eq-2}) takes the form
\begin{equation}
L_{p}F(z;\alpha)=\alpha_{p}F(z;\alpha),\quad1\leq p\leq N,\label{eq:gel-eq-5}
\end{equation}
where $\alpha_{p}=\alpha(E_{p,p})$. Similarly we see that (\ref{eq:gel-eq-3})
is the infinitesimal form of (\ref{eq:cova-2}). The main body of
the Gelfand HGS is the system (\ref{eq:gel-eq-1}) which characterizes
the image of Radon transform. We will see in Section \ref{subsec:Reduction-HGS}
that a system of EPD equations arises from (\ref{eq:gel-eq-1}) in
a natural way.

\subsection{Contiguity relations of Gelfand's HGF}

We recall the facts about the contiguity operators and contiguity
relations of the Gelfand HGF. The contiguity operators play an important
role in establishing the Laplace sequence for the system of EPD equations
obtained from (\ref{eq:gel-eq-1}).

Recall that $\mathfrak{g}$ is the Lie algebra of $G=\mathrm{GL}(N) $ and $\mathfrak{h}$ is
the Cartan subalgebra of $\mathfrak{g}$ consisting of the diagonal matrices.
We consider the root space decomposition of $\mathfrak{g}$ with respect to
the adjoint action of $\mathfrak{h}$ on $\mathfrak{g}$; for $h\in\mathfrak{h}$, define $\mathrm{ad}\,h\in\mathrm{End}(\mathfrak{g})$
by 
\[
\mathrm{ad}\,h:\mathfrak{g}\ni X\mapsto(\mathrm{ad}\,h)X:=[h,X]=hX-Xh\in\mathfrak{g}.
\]
Since $\mathfrak{h}$ is an abelian Lie algebra, namely $[h,h']=0$ for any
$h,h'\in\mathfrak{h}$, $\{\mathrm{ad}\,h\mid h\in\mathfrak{h}\}$ forms a commuting subset
of $\mathrm{End}(\mathfrak{g})$. Since we have $(\mathrm{ad}\,h)X=[h,X]=\left((h_{i}-h_{j})X_{i,j}\right)_{1\leq i,j\leq N}$
for $h=\mathrm{diag}(h_{1},\dots,h_{N})$ and $X=(X_{i,j})$, we have the
decomposition of $\mathfrak{g}$ into the eigenspaces common for all $h\in\mathfrak{h}$
: 

\[
\mathfrak{g}=\mathfrak{h}\oplus\bigoplus_{i\neq j}\mathfrak{g}_{\epsilon_{i}-\epsilon_{j}},\quad\mathfrak{g}_{\epsilon_{i}-\epsilon_{j}}=\mathbb{C}\cdot E_{i,j},
\]
where $E_{i,j}$ is the $(i,j$)-th matrix unit, and $\epsilon_{i}\in\mathfrak{h}^{*}$
is defined by $\epsilon_{i}(h)=h_{i}$ for $h=\mathrm{diag}(h_{1},\dots,h_{N})$.
The subspace $\mathfrak{g}_{\epsilon_{i}-\epsilon_{j}}\subset\mathfrak{g}$ is the eigenspace
of $\mathrm{ad}\,h$ common for all $h\in\mathfrak{h}$ with the eigenvalue $(\epsilon_{i}-\epsilon_{j})(h)=h_{i}-h_{j}$,
and $\epsilon_{i}-\epsilon_{j}$ is called a root. 

The contiguity operators are constructed as follows. Let $L_{p,q}:=L_{E_{p,q}}$
be defined by (\ref{eq:gel-eq-4}) for $X=E_{p,q}$, then its explicit
form is
\begin{equation}
L_{p,q}=z_{1,p}\partial_{1,q}+z_{2,p}\partial_{2,q},\quad1\leq p,q\leq N.\label{eq:cont-1}
\end{equation}
The following is known and is easily shown. See \cite{Gelfand,Kimura-H-T,Horikawa,sasaki}.
\begin{prop}
\label{prop:Conti-1}For the Gelfand HGF $F(z;\alpha)$, the contiguity
relations are
\begin{equation}
L_{p,q}F(z;\alpha)=\alpha_{q}F(z;\alpha+e_{p}-e_{q}),\quad1\leq p\neq q\leq N,\label{eq:conti-2}
\end{equation}
where $e_{p}\in\mathbb{C}^{N}$ is the unit vector whose unique nonzero entry
$1$ locates at $p$-th position.
\end{prop}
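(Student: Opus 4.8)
The goal is to prove Proposition \ref{prop:Conti-1}: the contiguity relation $L_{p,q}F(z;\alpha)=\alpha_q F(z;\alpha+e_p-e_q)$ for $1\le p\ne q\le N$.

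The plan is to compute $L_{p,q}F(z;\alpha)$ directly from the integral representation $F(z;\alpha)=\int_{C(z)}\prod_{j}(z_{1,j}+z_{2,j}u)^{\alpha_j}\,du$, using the explicit form $L_{p,q}=z_{1,p}\partial_{1,q}+z_{2,p}\partial_{2,q}$ from \eqref{eq:cont-1}. First I would differentiate the integrand under the integral sign. Only the $j=q$ factor depends on $z_{1,q}$ and $z_{2,q}$, and one computes
\[
\bigl(z_{1,p}\partial_{1,q}+z_{2,p}\partial_{2,q}\bigr)(z_{1,q}+z_{2,q}u)^{\alpha_q}
=\alpha_q\,(z_{1,p}+z_{2,p}u)\,(z_{1,q}+z_{2,q}u)^{\alpha_q-1}.
\]
Multiplying back by the remaining factors $\prod_{j\ne q}(z_{1,j}+z_{2,j}u)^{\alpha_j}$, the effect is to raise the exponent of the $p$-th linear form from $\alpha_p$ to $\alpha_p+1$ and lower the exponent of the $q$-th from $\alpha_q$ to $\alpha_q-1$, with an overall factor $\alpha_q$. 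Hence the integrand becomes $\alpha_q\prod_j(z_{1,j}+z_{2,j}u)^{(\alpha+e_p-e_q)_j}$, which is exactly the integrand defining $F(z;\alpha+e_p-e_q)$, and we get \eqref{eq:conti-2}.

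Two routine points need to be addressed to make this rigorous. First, differentiation under the integral sign must be justified: on a compact piece of the cycle $C(z)$ away from the singular points $p_1(z),\dots,p_N(z)$ the integrand is holomorphic in $z$ and the derivative is locally uniformly bounded, so differentiation under the integral is legitimate; near the endpoints one uses the standard regularization of the locally finite cycle (the condition $\alpha_j\notin\mathbb{Z}$ ensures convergence and stability of the regularized integral under the parameter shift $\alpha\mapsto\alpha+e_p-e_q$, since the shifted parameters still lie off $\mathbb{Z}$). Second, one should check that the cycle $C(z)$ appropriate to parameters $\alpha$ is carried to the cycle appropriate to $\alpha+e_p-e_q$; since the two local systems have the same ramification locus $X_z$ and the monodromy exponents change only by integers, the same topological cycle serves both, so no modification of $C$ is needed.

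The main obstacle, such as it is, is purely the bookkeeping at the boundary of the cycle: verifying that the formal computation under the integral sign survives the regularization of the locally finite chain. Once one grants (as in \cite{Gelfand,Kimura-H-T}) that $F(z;\alpha)$ and $F(z;\alpha+e_p-e_q)$ are both represented by regularized integrals over the same cycle with $\alpha$-parameters off $\mathbb{Z}$, the identity \eqref{eq:conti-2} is immediate from the one-line differentiation above. I would therefore present the algebraic computation in full and simply remark that the analytic justification is standard, citing the references.
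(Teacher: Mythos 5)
Your computation is correct and is exactly the standard argument: the paper itself gives no proof of this proposition, merely remarking that it ``is known and is easily shown'' with citations, and the differentiation under the integral sign that you carry out (only the $q$-th factor depends on $z_{1,q},z_{2,q}$, producing the factor $\alpha_q(z_{1,p}+z_{2,p}u)(z_{1,q}+z_{2,q}u)^{\alpha_q-1}$ and hence the shift $\alpha\mapsto\alpha+e_p-e_q$) is precisely what those references do. Your remarks on the cycle are also sound, since the shift by $e_p-e_q$ changes the monodromy exponents only by integers and preserves both the normalization $\sum_j\alpha_j=-2$ and the condition $\alpha_j\notin\mathbb{Z}$, so the same regularized cycle $C(z)$ represents both sides.
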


\subsection{\label{subsec:Reduction-HGS}Reduction of Gelfand's HGS}

Suppose that $z=(z_{1},\dots,z_{N})\in Z$ satisfies $z_{2,j}\neq0$
for $1\leq j\leq N$. This condition can be understood as follows.
Each column vector $z_{j}$ defines a point $[z_{j}]$ in $\mathbb{P}^{1}$
considering $z_{j}$ as its homogeneous coordinate. Then the above
condition implies that $N$ points $[z_{1}],\dots,[z_{N}${]} belong
to the affine chart $\{[s]\in\mathbb{P}^{1}\mid s_{2}\neq0\}$, where $s=\,^{t}(s_{1},s_{2})$
is the homogeneous coordinates. Here we make a reduction of the system
(\ref{eq:gel-eq-1}) using the action of $H$. Consider the change
of variable $z\mapsto x=(x_{1},\dots,x_{N})$ defined by 
\[
\mathbf{x}=\left(\begin{array}{ccc}
x_{1} & \dots & x_{N}\\
1 & \dots & 1
\end{array}\right)=\left(\begin{array}{ccc}
z_{1,1} & \dots & z_{1,N}\\
z_{2,1} & \dots & z_{2,N}
\end{array}\right)\mathrm{diag}(z_{2,1}^{-1},\dots,z_{2,N}^{-1})
\]
and the change of unknown $F\mapsto\Phi$:
\[
F(z;\alpha)=V(z;\alpha)\Phi(x;\alpha),\quad V(z;\alpha)=\prod_{1\leq j\leq N}z_{2,j}^{\alpha_{j}}.
\]
This change of unknown is suggested by Proposition \ref{prop:covariance}.
In fact we have 
\begin{equation}
\Phi(x;\alpha)=F\left(\mathbf{x};\alpha\right),\label{eq:red-0}
\end{equation}
which is the restriction of the Gelfand HGF to the submanifold 
\[
\left\{ \left(\begin{array}{ccc}
x_{1} & \dots & x_{N}\\
1 & \dots & 1
\end{array}\right)\mathrm{Mat}(2,N)\mid x_{i}\neq x_{j},\;1\leq i\ne j\leq N\right\} \subset Z.
\]
Sometimes we write $\Phi(x)$ for $\Phi(x;\alpha)$ for the sake of brevity.
First we investigate how the condition (\ref{eq:gel-eq-5}) for $F(z)$
is translated to that for $\Phi(x)$. Put $\partial_{p}=\partial/\partial x_{p},\;1\leq p\leq N$.
Since $x_{p}=z_{1,p}/z_{2,p}$, when we apply $\partial_{1,p},\partial_{2,p}$
to a function of $x$, we have
\begin{align*}
\partial_{1,p} & =\frac{\partial x_{p}}{\partial z_{1,p}}\partial_{p}=\frac{1}{z_{2,p}}\partial_{p}=\frac{x_{p}}{z_{1,p}}\partial_{p},\\
\partial_{2,p} & =\frac{\partial x_{p}}{\partial z_{2,p}}\partial_{p}=-\frac{z_{1,p}}{z_{2,p}^{2}}\partial_{p}=-\frac{x_{p}}{z_{2,p}}\partial_{p},
\end{align*}
and hence 
\begin{equation}
z_{1,p}\partial_{1,p}=x_{p}\partial_{p},\quad z_{2,p}\partial_{2,p}=-x_{p}\partial_{p}.\label{eq:2-red-1}
\end{equation}

\begin{lem}
For the function $\Phi(x;\alpha)$ defined by (\ref{eq:red-0}), the condition
(\ref{eq:gel-eq-5}) becomes trivial.
\end{lem}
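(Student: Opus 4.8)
The plan is to unwind the definitions and show that the equation~\eqref{eq:gel-eq-5}, namely $L_p F = \alpha_p F$ with $L_p = z_{1,p}\partial_{1,p} + z_{2,p}\partial_{2,p}$, is automatically satisfied once the substitution $F(z;\alpha) = V(z;\alpha)\Phi(x;\alpha)$ with $V = \prod_j z_{2,j}^{\alpha_j}$ is made. So the content of the lemma is really that $L_p$ becomes the zero operator (or rather that the equation collapses to $0=0$) after the change of variables and change of unknown.

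First I would compute $L_p F$ directly using the product rule: $L_p(V\Phi) = (L_p V)\Phi + V(L_p \Phi)$, since $L_p$ is a first-order derivation. For the factor $V$, note that $V$ depends on $z_{2,p}$ as $z_{2,p}^{\alpha_p}$ and $z_{1,p}\partial_{1,p}V = 0$ (no $z_{1,p}$-dependence), while $z_{2,p}\partial_{2,p}V = \alpha_p z_{2,p}^{\alpha_p}\cdot z_{2,p}^{-1}\cdot\prod_{j\ne p}z_{2,j}^{\alpha_j}\cdot z_{2,p} = \alpha_p V$; hence $L_p V = \alpha_p V$. For the factor $\Phi = \Phi(x;\alpha)$, which is a function of $x$ only, I would invoke~\eqref{eq:2-red-1}: since $z_{1,p}\partial_{1,p} = x_p\partial_p$ and $z_{2,p}\partial_{2,p} = -x_p\partial_p$ when acting on functions of $x$, we get $L_p\Phi = (x_p\partial_p - x_p\partial_p)\Phi = 0$. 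Combining, $L_p F = \alpha_p V\Phi + V\cdot 0 = \alpha_p F$, so~\eqref{eq:gel-eq-5} holds identically and imposes no condition on $\Phi$. That is what "becomes trivial" means.

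There is essentially no obstacle here; the only mild subtlety is bookkeeping the difference between $\partial_{i,p}$ acting on $V$ (which depends on all the $z_{2,j}$) and on $\Phi$ (which, after the change of variables, is a function of $x$ through $x_p = z_{1,p}/z_{2,p}$). I would therefore be explicit that $L_p$ as a derivation splits across the product $V\cdot\Phi$, that the cross-terms $z_{1,p}\partial_{1,p}$ and $z_{2,p}\partial_{2,p}$ cancel exactly on the $x$-dependent factor by~\eqref{eq:2-red-1}, and that the $V$-factor reproduces precisely the eigenvalue $\alpha_p$. Since this cancellation is transparent, I would state it in a line or two and move on to the treatment of the genuinely substantive equations~\eqref{eq:gel-eq-1} in the sequel.
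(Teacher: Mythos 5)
Your proof is correct and follows essentially the same route as the paper: split $L_pF$ by the product rule, observe $L_p\Phi=0$ (the paper phrases this as invariance of $x_p=z_{1,p}/z_{2,p}$ under the $\mathbb{C}^{\times}$-action generated by $L_p$, you verify it via the explicit cancellation in (\ref{eq:2-red-1}) — the same fact), and $L_pV=\alpha_pV$ by homogeneity. Nothing is missing.
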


\begin{proof}
Recall that $L_{p}=z_{1,p}\partial_{1,p}+z_{2,p}\partial_{2,p}$ for $1\leq p\leq N$.
Since the variable $x_{p}:=z_{1,p}/z_{2,p}$ is invariant by the action
of $\mathbb{C}^{\times}$ defined by $(z_{1,p},z_{2,p})\mapsto(cz_{1,p},cz_{2,p})$
and $L_{p}$ is an infinitesimal expression of this action, we see
that $L_{p}\Phi(x)=0$. It follows that $L_{p}F(z)=L_{p}V(z)\cdot\Phi(x)+V(z)\cdot L_{p}\Phi(x)=L_{p}V(z)\cdot\Phi(x)$.
On the other hand we have $L_{p}V(z)=\alpha_{p}V(z)$ since $V(z)$ is
a homogeneous function of $z$ of degree $\alpha_{p}$. Hence the condition
(\ref{eq:gel-eq-5}) trivially holds and produces no condition on
$\Phi(x)$. 
\end{proof}
Next we consider the equations obtained from $\square_{p,q}F=0$.
\begin{prop}
\label{prop:red-EPD}The equations $\square_{p,q}F=0,\;1\leq p\neq q\leq N$,
give the system of EPD equations for $\Phi(x;\alpha)$:
\[
\left\{ (x_{p}-x_{q})\partial_{p}\partial_{q}+\alpha_{q}\partial_{p}-\alpha_{p}\partial_{q}\right\} \Phi(x;\alpha)=0,\quad1\leq p,q\leq N.
\]
\end{prop}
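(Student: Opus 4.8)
The plan is to express the operator $\square_{p,q}=\partial_{1,p}\partial_{2,q}-\partial_{2,p}\partial_{1,q}$ in terms of the coordinates $x=(x_1,\dots,x_N)$ and the unknown $\Phi$, using the change of variables and change of unknown recorded above. First I would recall from (\ref{eq:2-red-1}) that $z_{1,p}\partial_{1,p}=x_p\partial_p$ and $z_{2,p}\partial_{2,p}=-x_p\partial_p$, but since $\square_{p,q}$ mixes the index $p$ with the index $q$, I need the operators $\partial_{1,p},\partial_{2,p},\partial_{1,q},\partial_{2,q}$ acting directly (not just the weighted combinations). Concretely, using $x_p=z_{1,p}/z_{2,p}$ and the chain rule I would write $\partial_{1,p}=z_{2,p}^{-1}\partial_p$ and $\partial_{2,p}=-z_{1,p}z_{2,p}^{-2}\partial_p=-x_p z_{2,p}^{-1}\partial_p$, and similarly for the index $q$; these hold when acting on a function of $x$ alone.

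Next I would apply $\square_{p,q}$ to $F(z;\alpha)=V(z;\alpha)\Phi(x;\alpha)$ with $V(z;\alpha)=\prod_j z_{2,j}^{\alpha_j}$. Because $V$ depends on $z_{2,p}$ and $z_{2,q}$ (with exponents $\alpha_p,\alpha_q$) but not on $z_{1,p},z_{1,q}$, the operators $\partial_{1,p},\partial_{1,q}$ pass through $V$ and only hit $\Phi$, while $\partial_{2,p},\partial_{2,q}$ produce both a term differentiating $V$ and a term differentiating $\Phi$. Carrying out the Leibniz expansion of $(\partial_{1,p}\partial_{2,q}-\partial_{2,p}\partial_{1,q})(V\Phi)$ and dividing through by $V$, the leading second-order terms combine to give (a multiple of) $(x_p-x_q)\partial_p\partial_q\Phi$, and the first-order terms coming from $\partial_{2,q}V=\alpha_q z_{2,q}^{-1}V$ and $\partial_{2,p}V=\alpha_p z_{2,p}^{-1}V$ produce $\alpha_q\partial_p\Phi$ and $-\alpha_p\partial_q\Phi$ respectively (up to a common nonzero factor like $z_{2,p}^{-1}z_{2,q}^{-1}$, which I would cancel). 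The claimed EPD equation $\{(x_p-x_q)\partial_p\partial_q+\alpha_q\partial_p-\alpha_p\partial_q\}\Phi=0$ then follows from $\square_{p,q}F=0$.

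The main obstacle — really the only delicate point — is bookkeeping in the chain rule: when I compute $\partial_{2,q}$ applied to something that already involves $\partial_p$ of a function of $x$, I must remember that $\partial_{2,q}$ sees $x_q$ as the variable and the factor $z_{2,q}^{-1}$ sitting in front, so $\partial_{2,q}(z_{2,q}^{-1}\partial_q\Phi)$-type cross terms and the product-rule hit on the explicit $z_{2,q}^{-1}$ coefficient have to be tracked carefully. I expect the cleanest route is to first verify the auxiliary identity that, on functions of $x$, $\partial_{1,p}\partial_{2,q}-\partial_{2,p}\partial_{1,q}=z_{2,p}^{-1}z_{2,q}^{-1}(x_p-x_q)\partial_p\partial_q$ (the $\partial_q z_{2,q}^{-1}$ and mixed terms cancel by antisymmetrization in $p\leftrightarrow q$), and then handle the $V$-contribution separately. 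Once those two pieces are assembled, multiplying by the nonvanishing factor $z_{2,p}z_{2,q}$ gives exactly the stated operator, so the proof is a direct computation with no conceptual gap.
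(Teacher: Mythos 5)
Your proposal is correct and follows essentially the same route as the paper: Leibniz expansion of $\square_{p,q}(V\Phi)$, the chain-rule identities $\partial_{1,p}=z_{2,p}^{-1}\partial_{p}$, $\partial_{2,p}=-x_{p}z_{2,p}^{-1}\partial_{p}$ on functions of $x$, the auxiliary identity $\partial_{1,p}\partial_{2,q}-\partial_{2,p}\partial_{1,q}=z_{2,p}^{-1}z_{2,q}^{-1}(x_{p}-x_{q})\partial_{p}\partial_{q}$, and cancellation of the common factor $z_{2,p}^{-1}z_{2,q}^{-1}$. The only delicate point you flag (cross terms from the product rule on the $z$-dependent coefficients) is in fact harmless because $p\neq q$, so the coefficients seen by $\partial_{1,p}$ do not depend on $z_{1,p}$.
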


\begin{proof}
For $F=V(z)\Phi(x)=(\prod_{j=1}^{N}z_{2,j}^{\alpha_{j}})\Phi(x)$, taking
account of (\ref{eq:2-red-1}), we have 
\begin{align}
\square_{p,q}F & =(\partial_{1,p}\partial_{2,q}-\partial_{2,p}\partial_{1,q})V(z)\Phi(x)\nonumber \\
 & =V(z)\left\{ \left(\frac{\alpha_{q}}{z_{2,q}}\partial_{1,p}-\frac{\alpha_{p}}{z_{2,p}}\partial_{1,q}\right)+(\partial_{1,p}\partial_{2,q}-\partial_{2,p}\partial_{1,q})\right\} \Phi(x)\nonumber \\
 & =V(z)\left\{ \left(\frac{\alpha_{q}}{z_{2,p}z_{2,q}}\partial_{p}-\frac{\alpha_{p}}{z_{2,p}z_{2,q}}\partial_{q}\right)+(\partial_{1,p}\partial_{2,q}-\partial_{2,p}\partial_{1,q})\right\} \Phi(x).\label{eq:2-red-2}
\end{align}
The second order differential operator in the last line of (\ref{eq:2-red-2})
acts on $\Phi(x)$ as 
\begin{align}
\partial_{1,p}\partial_{2,q}-\partial_{2,p}\partial_{1,q} & =\left(\frac{x_{p}}{z_{1,p}}\partial_{p}\right)\left(-\frac{x_{q}}{z_{2,q}}\partial_{q}\right)-\left(-\frac{x_{p}}{z_{2,p}}\partial_{p}\right)\left(\frac{x_{q}}{z_{1,q}}\partial_{q}\right)\label{eq:2-red-3}\\
 & =\frac{1}{z_{2,p}z_{2,q}}\left(-x_{q}+x_{p}\right)\partial_{p}\partial_{q}.\nonumber 
\end{align}
Multiplying the both sides (\ref{eq:2-red-2}) by $z_{2,p}z_{2.q}$
and using (\ref{eq:2-red-3}), we have from $\square_{p,q}F=0$ the
EPD equation $\left\{ (x_{p}-x_{q})\partial_{p}\partial_{q}+\alpha_{q}\partial_{p}-\alpha_{p}\partial_{q}\right\} \Phi(x)=0$.
\end{proof}

\subsection{Reduction of the contiguity relations}

Let us translate the contiguity relations (\ref{eq:conti-2}) for
$F(z;\alpha)$ to those for $\Phi(x;\alpha)$. To this end, we rewrite the
operators $L_{p,q}=z_{1,p}\partial_{1,q}+z_{2,p}\partial_{2,q}$ for $F(z;\alpha)$
to those for $\Phi(x;\alpha)$.
\begin{lem}
\label{lem:Conti-3} If $p\neq q$, the differential operator $L_{p,q}$
acts on a function of $x$ as 
\[
L_{p,q}=\frac{z_{2,p}}{z_{2,q}}(x_{p}-x_{q})\partial_{q}.
\]
\end{lem}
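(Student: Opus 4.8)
The plan is to express $L_{p,q}$ acting on a function of $x$ directly, using the chain rule formulas already established in the excerpt. Recall from \eqref{eq:2-red-1} that when $\partial_{1,q}$ and $\partial_{2,q}$ are applied to a function depending only on $x=(x_1,\dots,x_N)$, we have $\partial_{1,q}=\frac{1}{z_{2,q}}\partial_q$ and $\partial_{2,q}=-\frac{z_{1,q}}{z_{2,q}^2}\partial_q=-\frac{x_q}{z_{2,q}}\partial_q$, since $x_q=z_{1,q}/z_{2,q}$ and the only variables $\partial_{1,q},\partial_{2,q}$ can reach among $x_1,\dots,x_N$ is $x_q$. Substituting these into $L_{p,q}=z_{1,p}\partial_{1,q}+z_{2,p}\partial_{2,q}$ gives
\[
L_{p,q}=z_{1,p}\cdot\frac{1}{z_{2,q}}\partial_q+z_{2,p}\cdot\left(-\frac{x_q}{z_{2,q}}\right)\partial_q=\frac{1}{z_{2,q}}\left(z_{1,p}-z_{2,p}x_q\right)\partial_q.
\]

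Next I would rewrite the coefficient in terms of $x$: using $z_{1,p}=z_{2,p}x_p$ (again from the definition $x_p=z_{1,p}/z_{2,p}$), we get $z_{1,p}-z_{2,p}x_q=z_{2,p}(x_p-x_q)$, so
\[
L_{p,q}=\frac{z_{2,p}}{z_{2,q}}(x_p-x_q)\partial_q,
\]
which is the claimed formula. The hypothesis $p\neq q$ is what guarantees that $\partial_{1,q}$ and $\partial_{2,q}$ do not also differentiate the variable $x_p$ (they would if $p=q$, since then $z_{1,p},z_{2,p}$ themselves appear in the coefficient, and one would pick up extra terms); it is also implicitly what makes $x_p-x_q\neq0$ on the relevant stratum, though that is not needed for the identity itself.

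There is essentially no obstacle here — the lemma is a direct chain-rule computation entirely parallel to the proof of Proposition \ref{prop:red-EPD}, and the only point requiring a moment's care is the observation that, because $p\neq q$, the operators $\partial_{1,q},\partial_{2,q}$ see only the single $x$-variable $x_q$, so the substitution \eqref{eq:2-red-1} applies cleanly with no cross terms. One could also present it slightly differently by noting that $L_{p,q}$ is the infinitesimal generator of the right $\mathrm{GL}(2)$-type flow $z\mapsto z\exp(sE_{p,q})$, which moves only the $q$-th column, but the bare coordinate computation above is the shortest route and matches the style of the surrounding arguments.
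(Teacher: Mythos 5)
Your computation is correct and is essentially identical to the paper's own proof: both substitute the chain-rule expressions $\partial_{1,q}=\tfrac{1}{z_{2,q}}\partial_{q}$ and $\partial_{2,q}=-\tfrac{x_{q}}{z_{2,q}}\partial_{q}$ from (\ref{eq:2-red-1}) into $L_{p,q}=z_{1,p}\partial_{1,q}+z_{2,p}\partial_{2,q}$ and then use $z_{1,p}=z_{2,p}x_{p}$ to factor out $\tfrac{z_{2,p}}{z_{2,q}}(x_{p}-x_{q})$. Your added remark on why $p\neq q$ matters is a harmless elaboration of the same argument.
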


\begin{proof}
For a function $f$ of $x$, we have
\begin{align*}
L_{p,q} & f=(z_{1,p}\partial_{1,q}+z_{2,p}\partial_{2,q})f=z_{1,p}\left(\frac{x_{q}}{z_{1,q}}\right)\partial_{q}f+z_{2,p}\left(-\frac{x_{q}}{z_{2,q}}\right)\partial_{q}f\\
 & =\frac{z_{2,p}}{z_{2,q}}\left(x_{p}-x_{q}\right)\partial_{q}f.
\end{align*}
 
\end{proof}
\begin{prop}
\label{prop:Conti-4}The contiguity relations for $\Phi(x;\alpha)$ are
given by
\begin{equation}
\mathcal{L}_{p,q}\Phi(x;\alpha)=\alpha_{q}\Phi(x;\alpha+e_{p}-e_{q}),\quad1\leq p\neq q\leq N,\label{eq:2-red-4}
\end{equation}
with the differential operators $\mathcal{L}_{p,q}:=(x_{p}-x_{q})\partial_{q}+\alpha_{q}$.
\end{prop}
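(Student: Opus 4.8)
The plan is to apply the operator $L_{p,q}=z_{1,p}\partial_{1,q}+z_{2,p}\partial_{2,q}$ to both sides of the factorization $F(z;\alpha)=V(z;\alpha)\Phi(x;\alpha)$ with $V(z;\alpha)=\prod_{j}z_{2,j}^{\alpha_{j}}$, and then compare with the contiguity relation (\ref{eq:conti-2}) for $F$ given in Proposition \ref{prop:Conti-1}. First I would rewrite the right-hand side of (\ref{eq:conti-2}): since $\alpha+e_{p}-e_{q}$ increases $\alpha_{p}$ by $1$ and decreases $\alpha_{q}$ by $1$, one has $V(z;\alpha+e_{p}-e_{q})=\tfrac{z_{2,p}}{z_{2,q}}V(z;\alpha)$, so that
\[
L_{p,q}F(z;\alpha)=\alpha_{q}F(z;\alpha+e_{p}-e_{q})=\alpha_{q}\frac{z_{2,p}}{z_{2,q}}V(z;\alpha)\,\Phi(x;\alpha+e_{p}-e_{q}).
\]

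Next I would compute the left-hand side directly. Because $L_{p,q}$ is a first-order operator involving only the derivatives $\partial_{1,q},\partial_{2,q}$ in the $q$-th column, the Leibniz rule applies cleanly: $L_{p,q}(V\Phi)=(L_{p,q}V)\,\Phi+V\,(L_{p,q}\Phi)$. For the first term, $\partial_{1,q}V=0$ and $\partial_{2,q}V=\alpha_{q}z_{2,q}^{-1}V$, whence $L_{p,q}V=\alpha_{q}\tfrac{z_{2,p}}{z_{2,q}}V$. For the second term, $\Phi$ is a function of $x$ alone, so Lemma \ref{lem:Conti-3} gives $L_{p,q}\Phi=\tfrac{z_{2,p}}{z_{2,q}}(x_{p}-x_{q})\partial_{q}\Phi$. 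Collecting,
\[
L_{p,q}F(z;\alpha)=\frac{z_{2,p}}{z_{2,q}}V(z;\alpha)\Bigl\{\alpha_{q}+(x_{p}-x_{q})\partial_{q}\Bigr\}\Phi(x;\alpha)=\frac{z_{2,p}}{z_{2,q}}V(z;\alpha)\,\mathcal{L}_{p,q}\Phi(x;\alpha).
\]

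Equating the two expressions and cancelling the common factor $\tfrac{z_{2,p}}{z_{2,q}}V(z;\alpha)$, which is nowhere zero under the standing assumption $z_{2,j}\neq0$, yields $\mathcal{L}_{p,q}\Phi(x;\alpha)=\alpha_{q}\Phi(x;\alpha+e_{p}-e_{q})$, as claimed. This argument is essentially bookkeeping; the only point requiring care is to verify that the factor $z_{2,p}/z_{2,q}$ really appears identically on both sides — it does so because $L_{p,q}V$, $L_{p,q}\Phi$, and the parameter shift in $V$ each produce exactly this factor — and that the cancellation is legitimate, which is guaranteed by $z_{2,j}\neq0$. No genuine obstacle is expected.
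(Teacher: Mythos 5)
Your proposal is correct and follows essentially the same route as the paper: decompose $F=V\Phi$, apply the Leibniz rule to $L_{p,q}$, use Lemma \ref{lem:Conti-3} for the action on functions of $x$, compute $L_{p,q}V=\alpha_{q}(z_{2,p}/z_{2,q})V$, and match the factor $z_{2,p}/z_{2,q}$ arising from the parameter shift in $V$. The only difference is cosmetic — you make the computation of $L_{p,q}V$ explicit via $\partial_{1,q}V=0$ and $\partial_{2,q}V=\alpha_{q}z_{2,q}^{-1}V$, where the paper simply asserts the result.
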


\begin{proof}
In Proposition \ref{prop:Conti-1}, we gave the contiguity relations
for $F(z;\alpha)$:
\begin{equation}
L_{p,q}\cdot F(z;\alpha)=\alpha_{q}F(z;\alpha+e_{p}-e_{q}),\label{eq:2-red-5}
\end{equation}
where $L_{p,q}=z_{1,p}\partial_{1,q}+z_{2,p}\partial_{2,q}$. By Lemma \ref{lem:Conti-3},
$L_{p,q}=(z_{2,p}/z_{2,q})(x_{p}-x_{q})\partial_{q}$ when it is applied
to a function of $x$. Putting $F(z;\alpha)=V(z)\Phi(x;\alpha)$, $V(z)=\prod_{1\leq j\leq N}z_{2,j}^{\alpha_{j}}$,
in the left hand side of (\ref{eq:2-red-5}) and noting $L_{p,q}V(z)=(z_{2,p}/z_{2,q})\alpha_{q}V(z)$,
we have 
\begin{align*}
L_{p,q}F(z;\alpha) & =L_{p,q}V(z)\cdot\Phi(x;\alpha)+V(z)\cdot L_{p,q}\Phi(x;\alpha)\\
 & =\frac{z_{2,p}}{z_{2,q}}\alpha_{q}V(z)\cdot\Phi(x;\alpha)+V(z)\cdot\frac{z_{2,p}}{z_{2,q}}(x_{p}-x_{q})\partial_{q}\Phi(x;\alpha)\\
 & =\frac{z_{2,p}}{z_{2,q}}V(z)\left((x_{p}-x_{q})\partial_{q}+\alpha_{q}\right)\Phi(x;\alpha).
\end{align*}
On the other hand $F(z;\alpha+e_{p}-e_{q})=(z_{2,p}/z_{2,q})V(z)\Phi(x;\alpha+e_{p}-e_{q})$.
Then, from (\ref{eq:2-red-5}) we have
\[
\left\{ (x_{p}-x_{q})\partial_{q}+\alpha_{q}\right\} \Phi(x;\alpha)=\alpha_{q}\Phi(x;\alpha+e_{p}-e_{q}).
\]
\end{proof}

\section{Gelfand HGF as a solution of the  2dTHE}

As is seen in Section \ref{subsec:Reduction-HGS}, we obtained the
system of EPD equations
\begin{equation}
\mathcal{M}(\alpha):M_{p,q}(\alpha)u=\left\{ \partial_{p}\partial_{q}+\frac{\alpha_{q}}{x_{p}-x_{q}}\partial_{p}+\frac{\alpha_{p}}{x_{q}-x_{p}}\partial_{q}\right\} u=0,\;1\leq p\neq q\leq N\label{eq:gel-toda-1}
\end{equation}
from the system (\ref{eq:gel-eq-1}) as a result of reduction by the
group action $Z\curvearrowleft H$ and the covariance property given
in Proposition \ref{prop:covariance}. Note that the Gelfand HGF $F(z;\alpha)$
is characterized by the Gelfand HGS (\ref{eq:gel-eq-1}), (\ref{eq:gel-eq-2})
and (\ref{eq:gel-eq-3}). Following the process of reduction, we have
seen that the system $\mathcal{M}(\alpha)$ has a solution $\Phi(x;\alpha)$ which
is related to the Gelfand HGF $F(z;\alpha)$ by 
\begin{equation}
F(z;\alpha)=\left(\prod_{1\leq j\leq N}z_{2,j}^{\alpha_{j}}\right)\Phi(x;\alpha).\label{eq:gel-toda-1-1}
\end{equation}
By the same reduction, we obtained the operators
\[
L_{p,q}(\alpha)=(x_{p}-x_{q})\partial_{q}+\alpha_{q},\quad1\leq p\neq q\leq N
\]
from the contiguity operators of the Gelfand HGF. These operators
describe the contiguity relations of $\Phi(x;\alpha)$ as we have seen
in Proposition \ref{prop:Conti-4}.

\subsection{\label{subsec:Compatibility}Generator of the ideal for the system
$\protect\mathcal{M}(\protect\alpha)$}

Let $\mathcal{R}=\mathbb{C}[x,\prod_{a<b}(x_{a}-x_{b})^{-1}]\langle\partial_{1},\dots,\partial_{N}\rangle$
be the ring of differential operators with coefficients in the ring
$\mathbb{C}[x,\prod_{a<b}(x_{a}-x_{b})^{-1}]$, where $\mathbb{C}[x,\prod_{a<b}(x_{a}-x_{b})^{-1}]$
is the localization of the polynomial ring $\mathbb{C}[x]$ by the polynomial
$\prod_{a<b}(x_{a}-x_{b})$. Let $\mathcal{I}(\alpha)$ denote the left ideal
of $\mathcal{R}$ generated by EPD operators $\{M_{i,j}(\alpha)\}_{1\leq i\ne j\leq N}$.
We show the following fact which says that we can take a particular
generator of $\mathcal{I}(\alpha)$ consisting of $N-1$ operators. It will be
seen in Lemma \ref{lem:Gel-toda-2} that it corresponds to the set
of simple roots for $\mathfrak{gl}(N)$. 
\begin{prop}
\label{prop:EPD-1}For any distinct $1\leq i,j,k\leq N$, we have
the identity:
\begin{align}
 & S(M_{i,j}(\alpha),M_{j,k}(\alpha)):=\partial_{k}M_{i,j}(\alpha)-\partial_{i}M_{j,k}(\alpha)\label{eq:epd-1}\\
 & =-\alpha_{j}\left(\frac{x_{k}-x_{i}}{(x_{i}-x_{j})(x_{j}-x_{k})}\right)M_{i,k}(\alpha)-\frac{\alpha_{k}}{x_{j}-x_{k}}M_{i,j}(\alpha)-\frac{\alpha_{i}}{x_{i}-x_{j}}M_{j,k}(\alpha).\nonumber 
\end{align}
Under the condition $\alpha_{j}\neq0$ for $1\leq\forall j\leq N$, the
ideal $\mathcal{I}(\alpha)$ has a generator $\{M_{i,i+1}(\alpha)\}_{1\leq i\leq N-1}$. 
\end{prop}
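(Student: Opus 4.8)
The plan is to prove the operator identity (\ref{eq:epd-1}) by a short symbol computation, and then to read off the generation statement by descent on the index gap.

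\emph{Reducing the S-polynomial.} Because the coefficients of $M_{i,j}(\alpha)$ depend only on $x_i,x_j$ and $k\notin\{i,j\}$, the operator $\partial_k$ commutes with $M_{i,j}(\alpha)$ in $\mathcal{R}$; likewise $\partial_i$ commutes with $M_{j,k}(\alpha)$. Hence
\[
S(M_{i,j}(\alpha),M_{j,k}(\alpha))=M_{i,j}(\alpha)\,\partial_k-M_{j,k}(\alpha)\,\partial_i .
\]
The top-order terms $\partial_i\partial_j\partial_k$ cancel, and since $M_{i,j}(\alpha)$ and $M_{j,k}(\alpha)$ have no zeroth-order term, $S$ is a second-order operator with no lower-order part. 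Expanding and using $\frac{1}{x_i-x_j}-\frac{1}{x_k-x_j}=\frac{x_k-x_i}{(x_i-x_j)(x_k-x_j)}$, I expect
\[
S=\alpha_j\,\frac{x_k-x_i}{(x_i-x_j)(x_k-x_j)}\,\partial_i\partial_k-\frac{\alpha_i}{x_i-x_j}\,\partial_j\partial_k-\frac{\alpha_k}{x_j-x_k}\,\partial_i\partial_j .
\]

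\emph{Matching the right-hand side.} The principal symbols of $M_{i,k}(\alpha),M_{i,j}(\alpha),M_{j,k}(\alpha)$ are $\partial_i\partial_k,\ \partial_i\partial_j,\ \partial_j\partial_k$, and one checks at a glance that the second-order part of the combination on the right of (\ref{eq:epd-1}) equals the displayed $S$ (here $x_j-x_k=-(x_k-x_j)$ absorbs the sign in the $M_{i,k}(\alpha)$-coefficient). It therefore remains only to verify that the first-order part of
\[
-\alpha_j\,\frac{x_k-x_i}{(x_i-x_j)(x_j-x_k)}\,M_{i,k}(\alpha)-\frac{\alpha_k}{x_j-x_k}\,M_{i,j}(\alpha)-\frac{\alpha_i}{x_i-x_j}\,M_{j,k}(\alpha)
\]
vanishes. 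Collecting the coefficients of $\partial_i$, $\partial_j$, $\partial_k$ in turn, each collapses to $0$ by the elementary identities $\frac{1}{x_a-x_b}+\frac{1}{x_b-x_a}=0$ and $\frac{x_k-x_i}{x_i-x_k}=-1$; for instance the $\partial_i$-coefficient is $-\frac{\alpha_j\alpha_k}{(x_i-x_j)(x_j-x_k)}\bigl(\frac{x_k-x_i}{x_i-x_k}+1\bigr)=0$. This book-keeping is where essentially all the work of the proposition lies; once it is done, (\ref{eq:epd-1}) follows, and in particular $S(M_{i,j}(\alpha),M_{j,k}(\alpha))\in\mathcal{R}M_{i,j}(\alpha)+\mathcal{R}M_{j,k}(\alpha)$.

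\emph{From the identity to the generator.} The hypothesis $\alpha_j\neq0$ makes $\alpha_j$ a unit in $\mathcal{R}$, and $\frac{x_k-x_i}{(x_i-x_j)(x_j-x_k)}$ is a unit in $\mathbb{C}[x,\prod_{a<b}(x_a-x_b)^{-1}]$, since numerator and denominator are, up to sign, products of the inverted factors. Hence (\ref{eq:epd-1}) can be solved for $M_{i,k}(\alpha)$, and using $\partial_kM_{i,j}(\alpha)\in\mathcal{R}M_{i,j}(\alpha)$ and $\partial_iM_{j,k}(\alpha)\in\mathcal{R}M_{j,k}(\alpha)$ we obtain, for every triple of distinct indices,
\[
M_{i,k}(\alpha)\in\mathcal{R}\,M_{i,j}(\alpha)+\mathcal{R}\,M_{j,k}(\alpha).
\]
Next, $M_{p,q}(\alpha)=M_{q,p}(\alpha)$ directly from the definition (\ref{eq:gel-toda-1}), so we may assume $p<q$ and argue by induction on $q-p$: the case $q-p=1$ is trivial, and for $q-p\geq2$ the displayed inclusion with $(i,j,k)=(p,p+1,q)$ (legitimate since $\alpha_{p+1}\neq0$) puts $M_{p,q}(\alpha)$ into $\mathcal{R}M_{p,p+1}(\alpha)+\mathcal{R}M_{p+1,q}(\alpha)$, whereupon the induction hypothesis applies to $M_{p+1,q}(\alpha)$, whose gap is $q-p-1$. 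Thus every $M_{p,q}(\alpha)$ lies in the left ideal generated by $\{M_{i,i+1}(\alpha)\}_{1\le i\le N-1}$; since these operators belong to $\mathcal{I}(\alpha)$, they generate it. The one delicate point is the first-order cancellation in the second step; the remaining steps are formal.
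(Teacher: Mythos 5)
Your proposal is correct and follows essentially the same route as the paper: a direct expansion of $\partial_k M_{i,j}(\alpha)-\partial_i M_{j,k}(\alpha)$ using that $\partial_k$ (resp.\ $\partial_i$) commutes with the coefficients of $M_{i,j}(\alpha)$ (resp.\ $M_{j,k}(\alpha)$), a match of second-order symbols, and a check that the residual first-order part cancels. Your descent on the gap $q-p$ merely writes out in full the generation argument that the paper illustrates with the triple $(1,2,3)$ and declares "obvious," so nothing substantive differs.
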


\begin{proof}
We write $M_{i,j}(\alpha)$ as $M_{i,j}$ and we compute the left hand
side of (\ref{eq:epd-1}).
\begin{align*}
 & \partial_{k}M_{i,j}-\partial_{i}M_{j,k}\\
 & =\partial_{k}\left(\frac{\alpha_{j}}{x_{i}-x_{j}}\partial_{i}+\frac{\alpha_{i}}{x_{j}-x_{i}}\partial_{j}\right)-\partial_{i}\left(\frac{\alpha_{k}}{x_{j}-x_{k}}\partial_{j}+\frac{\alpha_{j}}{x_{k}-x_{j}}\partial_{k}\right)\\
 & =\alpha_{j}\left(\frac{1}{x_{i}-x_{j}}-\frac{1}{x_{k}-x_{j}}\right)\partial_{i}\partial_{k}-\frac{\alpha_{k}}{x_{j}-x_{k}}\partial_{i}\partial_{j}+\frac{\alpha_{i}}{x_{j}-x_{i}}\partial_{j}\partial_{k}\\
 & =\alpha_{j}\left(\frac{x_{k}-x_{i}}{(x_{i}-x_{j})(x_{k}-x_{j})}\right)M_{i,k}-\frac{\alpha_{k}}{x_{j}-x_{k}}M_{i,j}+\frac{\alpha_{i}}{x_{j}-x_{i}}M_{j,k}+R,
\end{align*}
Then it is immediate to see that $R=0$. Hence (\ref{eq:epd-1}) is
established. The second assertion may be obvious. In fact, to obtain
$M_{1,3}(\alpha)$ for example, we choose the indices $(1,2,3)$ as $(i,j,k)$
in (\ref{eq:epd-1}). Then we have 
\begin{multline*}
\alpha_{2}M_{1,3}(\alpha)=\frac{(x_{1}-x_{2})(x_{2}-x_{3})}{x_{1}-x_{3}}\Bigl(S(M_{1,2}(\alpha),M_{2,3}(\alpha)) \\
+\frac{\alpha_{3}}{x_{2}-x_{3}}M_{1,2}(\alpha)+\frac{\alpha_{1}}{x_{1}-x_{2}}M_{2,3}(\alpha)\Bigr)
\end{multline*}
and the right hand side is given by using only $M_{1,2}(\alpha),M_{2,3}(\alpha)$.
\end{proof}
\begin{rem}
$S(M_{i,j}(\alpha),M_{j,k}(\alpha))$ in Proposition \ref{prop:EPD-1} is
an S-pair of $M_{i,j}(\alpha)$ and $M_{j,k}(\alpha)$ in the ring $\mathcal{R}$
with an appropriate ordering which is used in the theory of Gr\"obner
basis for the ring of differential operators. 
\end{rem}

\subsection{$SL(2,\protect\mathbb{C})$ action on the solution space of $\protect\mathcal{M}(\protect\alpha)$}

In this section we consider the $SL(2,\mathbb{C})$ action on solutions of
$\mathcal{M}(\alpha)$. 
\begin{prop}
For a solution $u(x)$ of $\mathcal{M}(\alpha)$ and $g\in SL(2,\mathbb{C})$, define
$\tilde{u}(x)$ by 
\[
\tilde{u}(x):=\prod_{1\leq k\leq N}(cx_{k}+d)^{\alpha_{k}}\cdot u\left(\frac{ax_{1}+b}{cx_{1}+d},\cdots,\frac{ax_{N}+b}{cx_{N}+d}\right),\;g=\left(\begin{array}{cc}
a & b\\
c & d
\end{array}\right).
\]
Then $\tilde{u}(x)$ is also a solution of $\mathcal{M}(\alpha)$.
\end{prop}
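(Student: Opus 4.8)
The plan is to verify the claim by a direct change-of-variables computation, but organized conceptually through the $SL(2,\mathbb{C})$ action on the Grassmannian picture so that the bookkeeping stays manageable. Recall from Proposition~\ref{prop:covariance} and equation~(\ref{eq:gel-toda-1-1}) that $\Phi(x;\alpha)=F(\mathbf{x};\alpha)$ is the restriction of the Gelfand HGF to $\mathbf{x}=\left(\begin{smallmatrix} x_1 & \cdots & x_N \\ 1 & \cdots & 1\end{smallmatrix}\right)$, and that $F$ is covariant for the left $\mathrm{GL}(2)$-action $z\mapsto gz$ (equation~(\ref{eq:cova-2})) and the right $H$-action $z\mapsto zh$ (equation~(\ref{eq:cova-1})). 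The first step is to observe that, for $g=\left(\begin{smallmatrix} a & b \\ c & d\end{smallmatrix}\right)\in SL(2,\mathbb{C})$, the matrix $g\mathbf{x}$ has columns $\left(\begin{smallmatrix} ax_k+b \\ cx_k+d\end{smallmatrix}\right)$, so that $g\mathbf{x}=\mathbf{x}'\cdot\mathrm{diag}(cx_1+d,\dots,cx_N+d)$ where $\mathbf{x}'$ has bottom row all ones and top row entries $x_k'=(ax_k+b)/(cx_k+d)$. This is exactly the decomposition needed: $g\mathbf{x}\in Z$ (using $\det g = 1$ and $x_a\neq x_b$), its $H$-part is $\mathrm{diag}(cx_k+d)$ and its normalized form is $\mathbf{x}'$.

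The second step is to chain the two covariance identities. From~(\ref{eq:cova-2}), $F(g\mathbf{x};\alpha) = (\det g)^{-1} F(\mathbf{x};\tilde\alpha)$ along a transformed cycle — but here I must be careful: the group $\mathrm{GL}(2)$ acts by moving cycles, not parameters, so $(\det g)^{-1}F(\mathbf{x};\alpha;\tilde C) = F(\mathbf{x};\alpha;\tilde C)$ since $\det g=1$. Then applying~(\ref{eq:cova-1}) with $h=\mathrm{diag}(cx_k+d)$ to $F(g\mathbf{x};\alpha) = F(\mathbf{x}'\,h;\alpha) = \chi(h;\alpha)F(\mathbf{x}';\alpha) = \prod_k(cx_k+d)^{\alpha_k}\,\Phi(x';\alpha)$. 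Reading this backwards gives precisely $\tilde u(x) = \prod_k(cx_k+d)^{\alpha_k}\,\Phi(x';\alpha)$ in the case $u=\Phi$, and since $\Phi$ satisfies $\mathcal{M}(\alpha)$ (Proposition~\ref{prop:red-EPD}), the transformed function $F(g\mathbf{x};\alpha)$ — being a Gelfand HGF restricted to the slice $X$ up to the cycle issue — again satisfies the same system. However, to make the statement hold for an \emph{arbitrary} solution $u$ of $\mathcal{M}(\alpha)$, not just $\Phi$, I cannot invoke the integral representation; I must instead verify directly that the differential substitution $u\mapsto\tilde u$ preserves each EPD operator $M_{p,q}(\alpha)$.

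The third step, then, is the honest verification: compute $M_{p,q}(\alpha)\tilde u$ in terms of $(M_{r,s}(\alpha)u)$ evaluated at $x'=(x_k')$. Writing $\mu(x)=\prod_k(cx_k+d)^{\alpha_k}$ and $x_k'=(ax_k+b)/(cx_k+d)$, one has $\partial x_k'/\partial x_k = (cx_k+d)^{-2}$ (using $\det g = 1$) and $\partial_k\log\mu = \alpha_k c/(cx_k+d)$. A key algebraic identity to record is $x_p'-x_q' = (x_p-x_q)\big/\big((cx_p+d)(cx_q+d)\big)$. Substituting and using the product rule on $\partial_p\partial_q(\mu\cdot u(x'))$, the cross terms involving $\partial_p\log\mu$ and $\partial_q\log\mu$ must be shown to combine with the first-order terms $\frac{\alpha_q}{x_p-x_q}\partial_p + \frac{\alpha_p}{x_q-x_p}\partial_q$ acting on $\mu u(x')$ so that the whole expression collapses to $\mu(x)\cdot(cx_p+d)^{-1}(cx_q+d)^{-1}\cdot\big(M_{p,q}(\alpha)u\big)(x')$, which vanishes by hypothesis. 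I expect this cancellation to be the main obstacle: it is a one-line miracle in principle but requires patiently expanding $\partial_p\partial_q(\mu u(x')) = \mu\big(\partial_p\partial_q u' + (\partial_p\log\mu)\partial_q u' + (\partial_q\log\mu)\partial_p u' + (\partial_p\log\mu)(\partial_q\log\mu)u' + (\partial_p\partial_q\log\mu)u'\big)$ — noting $\partial_p\partial_q\log\mu=0$ for $p\neq q$ — and checking that the $c$-dependent terms telescope against $\frac{\alpha_q}{x_p-x_q}$ and $\frac{\alpha_p}{x_q-x_p}$ after clearing the common factor $(cx_p+d)(cx_q+d)$. Since the substitution is a group action (the $g$'s compose), it suffices to check it on the generators of $SL(2,\mathbb{C})$ — the unipotents $\left(\begin{smallmatrix}1&b\\0&1\end{smallmatrix}\right)$ (translations $x_k\mapsto x_k+b$, $\mu\equiv 1$: trivial), the torus $\left(\begin{smallmatrix}a&0\\0&a^{-1}\end{smallmatrix}\right)$ (scalings, $\mu\equiv 1$ since bottom entries $a^{-1}$ contribute a constant: immediate), and the inversion $\left(\begin{smallmatrix}0&1\\-1&0\end{smallmatrix}\right)$ (i.e. $x_k\mapsto -1/x_k$, $\mu=\prod x_k^{\alpha_k}$) — which reduces the computation to this single nontrivial case and is where I would spend the effort.
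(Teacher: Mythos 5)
Your proposal is correct, and its core --- a direct verification that the substitution $u\mapsto\tilde u$ intertwines each $M_{p,q}(\alpha)$ with itself --- is the same as the paper's, but the two arguments are organized differently. The paper first observes that $M_{p,q}(\alpha)$ involves only $\partial_p,\partial_q$ while the substitution acts coordinatewise, so everything reduces to the two-variable EPD operator; it then carries out the full M\"obius computation once, arriving at the exact identity $M\tilde u=\frac{A}{(cx+d)^2(cy+d)^2}\,(Mu)(X,Y)$ with $A=(cx+d)^{\alpha}(cy+d)^{\beta}$. You instead keep general $(p,q)$ but reduce to generators of $SL(2,\mathbb{C})$; this is legitimate because the multiplier $\prod_{k}(cx_k+d)^{\alpha_k}$ is a cocycle up to locally constant factors (which do not affect being a solution), and it shrinks the hard computation to the single inversion case, the translation and scaling cases following from translation-invariance and degree-$(-2)$ homogeneity of $M_{p,q}(\alpha)$. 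The preliminary identities you record ($\partial x_k'/\partial x_k=(cx_k+d)^{-2}$, $x_p'-x_q'=(x_p-x_q)/((cx_p+d)(cx_q+d))$, $\partial_p\partial_q\log\mu=0$ for $p\neq q$) are exactly the ones the paper's computation relies on, and your step 2 correctly recognizes that the $\mathrm{GL}(2)\times H$ covariance of the Gelfand HGF only motivates, and cannot replace, the verification for an arbitrary solution $u$ --- which is precisely how the paper frames its own proof. One small slip: the collapsed expression should carry the prefactor $\mu(x)(cx_p+d)^{-2}(cx_q+d)^{-2}$, not $\mu(x)(cx_p+d)^{-1}(cx_q+d)^{-1}$ (compare the coefficient of $u_{xy}$ in the paper's formula for $\partial_x\partial_y\tilde u$); since the prefactor is nowhere vanishing this does not affect the conclusion.
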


Before giving the proof of the proposition, we explain a motivation
to consider the transformation $u\mapsto\tilde{u}$ in the proposition.
We know that the system $\mathcal{M}(\alpha)$ has a solution $\Phi(x;\alpha)$ which
is defined by $\Phi(x;\alpha):=F(\mathbf{x};\alpha)$ by restricting the Gelfand
HGF $F(z;\alpha)$ to 
\[
\left\{ \mathbf{x}=\left(\begin{array}{ccc}
x_{1} & \dots & x_{N}\\
1 & \dots & 1
\end{array}\right)\right\} \subset Z.
\]
Take $g\in SL(2,\mathbb{C})$ as above and consider the transformation
\[
\mathbf{x}\mapsto\mathbf{x}':=g\mathbf{x} h^{-1}=\left(\begin{array}{ccc}
\frac{ax_{1}+b}{cx_{1}+d} & \dots & \frac{ax_{N}+b}{cx_{N}+d}\\
1 & \dots & 1
\end{array}\right)
\]
with $h=\mathrm{diag}(cx_{1}+d,\dots,cx_{N}+d)$. Then we have
\begin{align}
\Phi(x;\alpha) & =F(\mathbf{x};\alpha)=F(g^{-1}\mathbf{x}'h;\alpha)=\det g\cdot\chi(h;\alpha)F(\mathbf{x}';\alpha)\label{eq:sl-2}\\
 & =\prod_{1\leq k\leq N}(cx_{k}+d)^{\alpha_{k}}\cdot\Phi\left(\frac{ax_{1}+b}{cx_{1}+d},\cdots,\frac{ax_{N}+b}{cx_{N}+d};\alpha\right).\nonumber 
\end{align}
Since $\Phi(x;\alpha)$ is a solution of the system $\mathcal{M}(\alpha)$, the right
hand side of (\ref{eq:sl-2}) also satisfies $\mathcal{M}(\alpha)$. This fact
motivates to consider the transformation $u\mapsto\tilde{u}$ in the
proposition. Now we give the proof.
\begin{proof}
We have to show $M_{i,j}(\alpha)\tilde{u}(x)=0$ for any $i\ne j$. Noting
$M_{i,j}(\alpha)$ contains the derivations $\partial_{i},\partial_{j}$ only and
taking into account the form of transformation $x_{k}\mapsto(ax_{k}+b)/(cx_{k}+d)$,
we can regard other variables $x_{a}\;(a\neq i,j)$ as fixed constants.
Hence the proof is reduced to the $2$ variables case; let $u(x,y)$
is a solution of single EPD equation
\[
Mu=\left(\partial_{x}\partial_{y}+\frac{\beta}{x-y}\partial_{x}+\frac{\alpha}{y-x}\partial_{y}\right)u=0
\]
and let 
\[
\tilde{u}(x,y)=A(x,y)u\left(\frac{ax+b}{cx+d},\frac{ay+b}{cy+d}\right),\;A(x,y)=(cx+d)^{\alpha}(cy+d)^{\beta}.
\]
Put $X=(ax+b)/(cx+d),Y=(ay+b)/(cy+d)$. Then 
\begin{align*}
\partial_{x}\tilde{u} & =\frac{\alpha c}{cx+d}A\cdot u(X,Y)+\frac{1}{(cx+d)^{2}}A\cdot u_{x}(X,Y),\\
\partial_{y}\tilde{u} & =\frac{\beta c}{cy+d}A\cdot u(X,Y)+\frac{1}{(cy+d)^{2}}A\cdot u_{y}(X,Y),\\
\partial_{x}\partial_{y}\tilde{u} & =\frac{A}{(cx+d)^{2}(cy+d)^{2}}\left\{ u_{xy}(X,Y)+\beta c(cy+d)u_{x}(X,Y)\right.\\
 & \left.+\alpha c(cx+d)u_{y}(X,Y)+\alpha\beta c^{2}(cx+d)(cy+d)u(X,Y)\right\} .
\end{align*}
Then multiplying $M\tilde{u}$ by $(cx+d)^{2}(cy+d)^{2}/A$ and using
the above expressions, we have 
\begin{align*}
M\tilde{u} & \to u_{xy}(X,Y)+\beta\frac{(cx+d)(cy+d)}{x-y}u_{x}(X,Y)\\
 & +\alpha\frac{(cx+d)(cy+d)}{y-x}u_{y}(X,Y)\\
 & =u_{xy}(X,Y)+\frac{\beta}{X-Y}u_{x}(X,Y)+\frac{\alpha}{Y-X}u_{y}(X,Y)\\
 & =0.
\end{align*}
 This proves the proposition.
\end{proof}

\subsection{Contiguity for the system $\protect\mathcal{M}(\protect\alpha)$}

Let $\mathcal{S}(\alpha)$ denote the space of holomorphic solutions of the system
$\mathcal{M}(\alpha)$ in some simply connected domain $\Omega'\subset\mathbb{C}^{N}\setminus\cup_{i\neq j}\{x_{i}=x_{j}\}$.
We also use $\mathcal{S}_{p,q}(\alpha)$ to denote the set of holomorphic solutions
of the single EPD equation $M_{p,q}(\alpha)u=0$. Then $\mathcal{S}(\alpha)=\cap_{p\neq q}\mathcal{S}_{p,q}(\alpha)$.
In Proposition \ref{prop:Conti-4}, we gave the contiguity relation
for the Gelfand HGF $\Phi(x;\alpha)$, where the differential operator
$\mathcal{L}_{p,q}(\alpha)$ is used. In this section $\mathcal{L}_{p,q}(\alpha)$ will be
denoted as $L_{p,q}(\alpha)$, namely,
\[
L_{p,q}(\alpha)=(x_{p}-x_{q})\partial_{q}+\alpha_{q},\quad1\leq p\neq q\leq N.
\]
It is natural to expect that $L_{p,q}(\alpha)$ defines a linear map $L_{p,q}(\alpha):\mathcal{S}_{p,q}(\alpha)\to\mathcal{S}_{p,q}(\alpha+e_{p}-e_{q})$.
This is correct and will be shown in Lemma \ref{lem:Gel-toda-4}.
But we can show more. For any fixed pair $(i,j),1\leq i\neq j\leq N$,
we can show that $L_{i,j}(\alpha)$ defines a linear map $L_{i,j}(\alpha):\mathcal{S}(\alpha)\to\mathcal{S}(\alpha+e_{i}-e_{j})$.
From now on we fix a pair $(i,j)$ in this section. Then we can show
the following.
\begin{prop}
\label{prop:Gel-toda-1}If $u\in\mathcal{S}(\alpha)$, then $L_{i,j}(\alpha)u\in\mathcal{S}(\alpha+e_{i}-e_{j})$.
Under the condition $(\alpha_{i}+1)\alpha_{j}\neq0$, the linear map $\mathcal{S}(\alpha)\ni u\mapsto L_{i,j}(\alpha)u\in\mathcal{S}(\alpha+e_{i}-e_{j})$
is an isomorphism. The inverse map is given by
\[
\frac{1}{(\alpha_{i}+1)\alpha_{j}}L_{j,i}(\alpha+e_{i}-e_{j}).
\]
\end{prop}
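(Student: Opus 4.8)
\emph{Proof strategy.} I would treat the two assertions separately: the substantive one is that $L_{i,j}(\alpha)$ sends a solution of the whole system $\mathcal{M}(\alpha)$ to a solution of the whole system $\mathcal{M}(\alpha+e_i-e_j)$, and the isomorphism statement then follows from a short composition identity. The hard part will be the first assertion, and within it the ``mixed'' verification described below.

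\emph{Isomorphism from a composition identity.} First I would record a two-variable computation. Fix $u\in\mathcal{S}_{i,j}(\alpha)$ (in particular any $u\in\mathcal{S}(\alpha)$). Since $L_{i,j}(\alpha)=(x_i-x_j)\partial_j+\alpha_j$ and $L_{j,i}(\alpha+e_i-e_j)=(x_j-x_i)\partial_i+(\alpha_i+1)$ involve only $x_i,x_j$ and $\partial_i,\partial_j$, one may treat all other variables as constants; expanding $L_{j,i}(\alpha+e_i-e_j)\,L_{i,j}(\alpha)u$ and using $M_{i,j}(\alpha)u=0$ to eliminate $\partial_i\partial_j u$ gives $L_{j,i}(\alpha+e_i-e_j)\,L_{i,j}(\alpha)u=(\alpha_i+1)\alpha_j\,u$. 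Exchanging $i\leftrightarrow j$ and replacing $\alpha$ by $\alpha+e_i-e_j$ yields, symmetrically, $L_{i,j}(\alpha)\,L_{j,i}(\alpha+e_i-e_j)v=\alpha_j(\alpha_i+1)\,v$ for $v\in\mathcal{S}_{i,j}(\alpha+e_i-e_j)$. Granting the first assertion (applied both to $(i,j)$ with parameter $\alpha$ and to $(j,i)$ with parameter $\alpha+e_i-e_j$), both composites are multiplication by the scalar $(\alpha_i+1)\alpha_j$ on the relevant solution spaces, so when $(\alpha_i+1)\alpha_j\neq0$ the map $u\mapsto L_{i,j}(\alpha)u$ is a linear isomorphism $\mathcal{S}(\alpha)\to\mathcal{S}(\alpha+e_i-e_j)$ whose inverse is $\frac{1}{(\alpha_i+1)\alpha_j}L_{j,i}(\alpha+e_i-e_j)$.

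\emph{The first assertion.} Write $v=L_{i,j}(\alpha)u$ with $u\in\mathcal{S}(\alpha)$; I must check $M_{p,q}(\alpha+e_i-e_j)\,v=0$ for every $p\neq q$, splitting into cases by the size of $\{p,q\}\cap\{i,j\}$. If $\{p,q\}\cap\{i,j\}=\emptyset$, then $M_{p,q}(\alpha+e_i-e_j)=M_{p,q}(\alpha)$, whose coefficients involve only $x_p,x_q$, hence it commutes with $L_{i,j}(\alpha)$ and $M_{p,q}(\alpha+e_i-e_j)v=L_{i,j}(\alpha)M_{p,q}(\alpha)u=0$. If $\{p,q\}=\{i,j\}$, the claim is exactly the single-equation statement $L_{i,j}(\alpha)\colon\mathcal{S}_{i,j}(\alpha)\to\mathcal{S}_{i,j}(\alpha+e_i-e_j)$ recorded in Lemma~\ref{lem:Gel-toda-4}. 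The remaining and essential case is that $\{p,q\}$ contains exactly one of $i,j$, say $\{p,q\}=\{i,k\}$ with $k\neq i,j$ (the subcase $\{p,q\}=\{j,k\}$ being entirely analogous): here I would substitute the explicit forms of $M_{i,k}(\alpha+e_i-e_j)$ and of $L_{i,j}(\alpha)$, use the three equations $M_{i,j}(\alpha)u=M_{i,k}(\alpha)u=M_{j,k}(\alpha)u=0$ to rewrite every second-order derivative of $u$ as a rational combination of $\partial_i u,\partial_j u,\partial_k u$, and differentiate one of these relations once more to eliminate the lone third-order term $\partial_i\partial_j\partial_k u$; what remains is a rational identity in $x_i,x_j,x_k$ multiplying the first derivatives, which is verified by partial fractions. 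Equivalently, one proves the operator membership $M_{i,k}(\alpha+e_i-e_j)\,L_{i,j}(\alpha)\in\mathcal{I}(\alpha)$ in the ring $\mathcal{R}$; this is where the S-pair relation of Proposition~\ref{prop:EPD-1} enters, and it is the one step of the argument that requires a genuine computation.
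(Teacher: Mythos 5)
Your proposal is correct and shares the paper's overall architecture: the composition identity $L_{j,i}(\alpha+e_i-e_j)L_{i,j}(\alpha)=-(x_i-x_j)^2M_{i,j}(\alpha)+(\alpha_i+1)\alpha_j$ (and its $i\leftrightarrow j$, $\alpha\mapsto\alpha+e_i-e_j$ counterpart) for the isomorphism, and the case split on $\#(\{p,q\}\cap\{i,j\})$ for the main assertion, with the disjoint and coincident cases handled exactly as in the paper. Where you genuinely diverge is the mixed case $\#(\{p,q\}\cap\{i,j\})=1$, which is the heart of the proof. The paper does not eliminate derivatives: it writes $(x_i-x_q)^2M_{i,q}(\alpha+e_i)$ as a scalar minus a product of two contiguity operators (Lemma \ref{lem:Gel-toda-2} in the form (\ref{eq:gel-toda-3})), then collapses the resulting triple product of $L$'s using the reductions $L_{p,q}(\alpha+e_q)L_{q,r}(\alpha)\equiv(\alpha_q+1)L_{p,r}(\alpha)$ and $L_{q,r}(\alpha)L_{p,q}(\alpha)\equiv\alpha_qL_{p,r}(\alpha)$ modulo $\mathcal{R}\cdot M_{q,r}(\alpha)$ (Lemma \ref{lem:Gel-toda-6}); the two scalar contributions cancel and one obtains $M_{i,q}(\alpha+e_i)L_{i,j}(\alpha)\in\mathcal{R}\cdot M_{i,j}(\alpha)+\mathcal{R}\cdot M_{q,j}(\alpha)$ with essentially no rational-function bookkeeping. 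Your route --- use the three EPD equations to express all second derivatives of $u$ in first derivatives, differentiate the $(i,j)$-equation in $x_k$ to remove $\partial_i\partial_j\partial_ku$, and verify the residual first-order identity --- establishes the same ideal membership and will close (the identity is true), but at the cost of a messier partial-fractions check; note also that Proposition \ref{prop:EPD-1} is not what the paper invokes here, it serves only the statement about generators of $\mathcal{I}(\alpha)$, although the $S$-pair relation is indeed the structural reason your third-order term can be eliminated. One small caution: in the paper the two mixed subcases $\{p,q\}=\{i,k\}$ and $\{p,q\}=\{j,k\}$ are not literally symmetric (they use (\ref{eq:gel-toda-6}) and (\ref{eq:gel-toda-7}) respectively, with different shifts of $\alpha$), though in your elimination scheme they are genuinely parallel.
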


To show this proposition, we prepare several lemmas. 
\begin{lem}
\label{lem:Gel-toda-2}For any $1\leq p\neq q\leq N$, we have 
\begin{equation}
L_{q,p}(\alpha+e_{p}-e_{q})L_{p,q}(\alpha)=-(x_{p}-x_{q})^{2}M_{p,q}(\alpha)+(\alpha_{p}+1)\alpha_{q}.\label{eq:gel-toda-2}
\end{equation}
\end{lem}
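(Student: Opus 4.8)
The plan is to prove the operator identity in Lemma~\ref{lem:Gel-toda-2} by a direct computation, composing the two first-order operators and then matching the result against the EPD operator $M_{p,q}(\alpha)$. Write $L_{p,q}(\alpha) = (x_p - x_q)\partial_q + \alpha_q$, and apply $L_{q,p}(\alpha + e_p - e_q)$ to it. The key point is that the parameter shift $\alpha \mapsto \alpha + e_p - e_q$ changes $\alpha_p \to \alpha_p + 1$ and $\alpha_q \to \alpha_q - 1$, so that $L_{q,p}(\alpha + e_p - e_q) = (x_q - x_p)\partial_p + (\alpha_p + 1)$. Thus the composite is
\[
\bigl((x_q - x_p)\partial_p + (\alpha_p+1)\bigr)\bigl((x_p - x_q)\partial_q + \alpha_q\bigr),
\]
and the task is to expand this product of operators, being careful that $\partial_p$ acts on the coefficient $(x_p - x_q)$ appearing to its right.

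First I would expand term by term. The leading term is $(x_q - x_p)\partial_p\bigl((x_p - x_q)\partial_q\bigr) = (x_q - x_p)(x_p - x_q)\partial_p\partial_q + (x_q - x_p)\partial_q$, where the second summand comes from $\partial_p$ hitting $x_p - x_q$; note $(x_q - x_p)(x_p-x_q) = -(x_p - x_q)^2$. The remaining three contributions are $(x_q - x_p)\partial_p \cdot \alpha_q = \alpha_q(x_q - x_p)\partial_p$, then $(\alpha_p+1)(x_p - x_q)\partial_q$, and finally the constant $(\alpha_p+1)\alpha_q$. Collecting everything, the first-order part is $-\alpha_q(x_p - x_q)\partial_p + (\alpha_p + 1 + 1)(x_p - x_q)\partial_q$... at which point I would pause to recheck signs, since the claimed right-hand side is $-(x_p - x_q)^2 M_{p,q}(\alpha) + (\alpha_p+1)\alpha_q$ with no stray first-order terms beyond those packaged into $M_{p,q}(\alpha)$.

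The mechanism that makes this work is that $-(x_p - x_q)^2 M_{p,q}(\alpha)$ expands, using the definition $M_{p,q}(\alpha) = \partial_p\partial_q + \tfrac{\alpha_q}{x_p - x_q}\partial_p - \tfrac{\alpha_p}{x_p - x_q}\partial_q$, to $-(x_p - x_q)^2\partial_p\partial_q - \alpha_q(x_p - x_q)\partial_p + \alpha_p(x_p - x_q)\partial_q$. So I need the first-order part of the composite to equal exactly $-\alpha_q(x_p-x_q)\partial_p + \alpha_p(x_p - x_q)\partial_q$; comparing with what I get from the expansion, the $\partial_p$ coefficients match immediately, and the $\partial_q$ coefficients match provided the terms $(x_q - x_p)\partial_q$ (from differentiating the coefficient) and $(\alpha_p+1)(x_p - x_q)\partial_q$ combine to $\alpha_p(x_p - x_q)\partial_q$ — indeed $(x_q - x_p) + (\alpha_p+1)(x_p - x_q) = (x_p - x_q)(-1 + \alpha_p + 1) = \alpha_p(x_p - x_q)$. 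This is the small miracle that the lemma rests on, and the identity follows.

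The only real obstacle is bookkeeping: keeping the noncommutativity straight (the derivative $\partial_p$ must be differentiated through the coefficient $x_p - x_q$ exactly once, producing the term that ultimately supplies the ``$-1$'' needed above), and tracking the two independent sign flips coming from $x_q - x_p = -(x_p - x_q)$ and from the parameter shift. No conceptual difficulty arises; I would simply present the expansion cleanly in two or three displayed lines, isolate the second-order term as $-(x_p-x_q)^2\partial_p\partial_q$, verify the first-order coefficients against those of $-(x_p-x_q)^2 M_{p,q}(\alpha)$ as above, and read off the constant term $(\alpha_p+1)\alpha_q$, which completes the proof of \eqref{eq:gel-toda-2}.
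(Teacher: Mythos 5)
Your proposal is correct and follows essentially the same route as the paper: expand the composite $\bigl((x_q-x_p)\partial_p+(\alpha_p+1)\bigr)\bigl((x_p-x_q)\partial_q+\alpha_q\bigr)$, pick up the extra $(x_q-x_p)\partial_q$ from differentiating the coefficient, and match the result against $-(x_p-x_q)^2M_{p,q}(\alpha)+(\alpha_p+1)\alpha_q$. The transient ``$(\alpha_p+1+1)$'' slip is corrected by your own subsequent check $(x_q-x_p)+(\alpha_p+1)(x_p-x_q)=\alpha_p(x_p-x_q)$, so the argument as finally stated is exactly the paper's.
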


\begin{proof}
Let us compute the left hand side.

\begin{align*}
 & L_{q,p}(\alpha+e_{p}-e_{q})L_{p,q}(\alpha)\\
 & =\left((x_{q}-x_{p})\partial_{p}+(\alpha_{p}+1)\right)\left((x_{p}-x_{q})\partial_{q}+\alpha_{q}\right)\\
 & =(x_{q}-x_{p})\partial_{p}\cdot(x_{p}-x_{q})\partial_{q}+\alpha_{q}(x_{q}-x_{p})\partial_{p}+(\alpha_{p}+1)(x_{p}-x_{q})\partial_{q}+(\alpha_{p}+1)\alpha_{q}\\
 & =-(x_{p}-x_{q})^{2}\partial_{p}\partial_{q}+\alpha_{q}(x_{q}-x_{p})\partial_{p}+\alpha_{p}(x_{p}-x_{q})\partial_{q}+(\alpha_{p}+1)\alpha_{q}\\
 & =-(x_{p}-x_{q})^{2}\left\{ \partial_{p}\partial_{q}+\frac{\alpha_{q}}{x_{p}-x_{q}}\partial_{p}+\frac{\alpha_{p}}{x_{q}-x_{p}}\partial_{q}\right\} +(\alpha_{p}+1)\alpha_{q}\\
 & =-(x_{p}-x_{q})^{2}M_{p,q}(\alpha)+(\alpha_{p}+1)\alpha_{q}.
\end{align*}
Thus the lemma is proved.
\end{proof}
\begin{lem}
\label{lem:Gel-toda-3}For any $1\leq p\neq q\leq N$, we have 
\begin{equation}
(x_{p}-x_{q})^{2}M_{p,q}(\alpha+e_{p}-e_{q})L_{p,q}(\alpha)=L_{p,q}(\alpha)\cdot(x_{p}-x_{q})^{2}M_{p,q}(\alpha).\label{eq:gel-toda-4}
\end{equation}
\end{lem}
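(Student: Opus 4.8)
The plan is to prove the operator identity \eqref{eq:gel-toda-4} by a direct computation in the ring $\mathcal{R}$, but organized so as to reuse the factorization \eqref{eq:gel-toda-2} of Lemma \ref{lem:Gel-toda-2} rather than expanding both sides from scratch. First I would observe that both sides of \eqref{eq:gel-toda-4} are third-order differential operators in the two variables $x_p,x_q$ alone (the other $x_a$ enter only as constants), so it suffices to verify the identity treating $M_{p,q}(\alpha)=\partial_p\partial_q+\frac{\alpha_q}{x_p-x_q}\partial_p+\frac{\alpha_p}{x_q-x_p}\partial_q$ and $L_{p,q}(\alpha)=(x_p-x_q)\partial_q+\alpha_q$ as elements of $\mathbb{C}[x_p,x_q,(x_p-x_q)^{-1}]\langle\partial_p,\partial_q\rangle$.

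The cleanest route, I expect, is to avoid expanding $(x_p-x_q)^2 M_{p,q}$ directly and instead exploit Lemma \ref{lem:Gel-toda-2} twice. From \eqref{eq:gel-toda-2} applied with $\alpha$ and with $\alpha+e_p-e_q$ we get
\[
(x_p-x_q)^2 M_{p,q}(\alpha) = (\alpha_p+1)\alpha_q - L_{q,p}(\alpha+e_p-e_q)L_{p,q}(\alpha),
\]
\[
(x_p-x_q)^2 M_{p,q}(\alpha+e_p-e_q) = (\alpha_p+2)(\alpha_q-1) - L_{q,p}(\alpha+2e_p-2e_q)L_{p,q}(\alpha+e_p-e_q).
\]
Substituting these into the two sides of \eqref{eq:gel-toda-4}, the scalar terms $(\alpha_p+1)\alpha_q L_{p,q}(\alpha)$ and $(\alpha_p+2)(\alpha_q-1)L_{p,q}(\alpha)$ appear on opposite sides, and the claimed identity reduces to a statement purely about compositions of the first-order operators $L_{p,q}$ and $L_{q,p}$ with shifted parameters, together with the bookkeeping of those scalars. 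One then checks that the residual first-order-operator identity holds; this is essentially a braid-type relation among the $L$'s, and should follow from the two basic facts that $L_{q,p}(\alpha+e_p-e_q)L_{p,q}(\alpha)$ acts on $\mathcal{S}_{p,q}$-type functions by the scalar $(\alpha_p+1)\alpha_q$ (Lemma \ref{lem:Gel-toda-2}) and that $L_{p,q}(\alpha)$ intertwines the relevant parameter shifts.

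Alternatively — and this may in fact be the safer route to write out — one can simply expand both sides of \eqref{eq:gel-toda-4} as explicit differential operators. Writing $w=x_p-x_q$, the left side is $w^2\bigl(\partial_p\partial_q+\tfrac{\alpha_q-1}{w}\partial_p-\tfrac{\alpha_p+1}{w}\partial_q\bigr)\bigl(w\partial_q+\alpha_q\bigr)$ and the right side is $\bigl(w\partial_q+\alpha_q\bigr)\bigl(w^2\partial_p\partial_q+(\alpha_q-1)w\partial_p-(\alpha_p+1)w\partial_q\bigr)$; one collects coefficients of $\partial_p\partial_q^2$, $\partial_p\partial_q$, $\partial_q^2$, $\partial_p$, $\partial_q$, and $1$ on each side, using $\partial_p w=1$, $\partial_q w=-1$, and checks they agree. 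The only mildly delicate point is keeping the parameter in $M_{p,q}$ shifted to $\alpha+e_p-e_q$ on the left side while it stays $\alpha$ on the right, so that the $\partial_p$- and $\partial_q$-coefficients of the two zeroth-shift operators differ by exactly the amount absorbed when $L_{p,q}(\alpha)$ is commuted through.

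The main obstacle is purely organizational: there is no conceptual difficulty, but the third-order terms produce many lower-order commutator corrections, and it is easy to drop a term or mis-track which parameter ($\alpha$ versus $\alpha+e_p-e_q$) sits in which factor. I would therefore present the proof via the substitution approach above, so that the heavy expansion is quarantined inside Lemma \ref{lem:Gel-toda-2} and what remains is a short, checkable manipulation of first-order operators; the point of \eqref{eq:gel-toda-4} is that $L_{p,q}(\alpha)$ maps the kernel of $(x_p-x_q)^2M_{p,q}(\alpha)$ into the kernel of $(x_p-x_q)^2M_{p,q}(\alpha+e_p-e_q)$, which is exactly what is needed in the next lemma to promote $L_{i,j}(\alpha)$ to a map $\mathcal{S}(\alpha)\to\mathcal{S}(\alpha+e_i-e_j)$.
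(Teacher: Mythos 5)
Your overall strategy---deriving \eqref{eq:gel-toda-4} from the factorization \eqref{eq:gel-toda-2} rather than by expanding both sides---is the same as the paper's, and your fallback of brute-force expansion would certainly also work. But the specific substitution in your first route is not the one that makes the argument close up, and as written it leaves a gap. The paper does not shift \eqref{eq:gel-toda-2} by $\alpha\to\alpha+e_p-e_q$ directly (which, as you note, produces the awkward composition $L_{q,p}(\alpha+2e_p-2e_q)L_{p,q}(\alpha+e_p-e_q)$). Instead it first swaps $p\leftrightarrow q$ in \eqref{eq:gel-toda-2}, uses $M_{p,q}(\alpha)=M_{q,p}(\alpha)$, and then shifts the parameter, obtaining
\[
L_{p,q}(\alpha)L_{q,p}(\alpha+e_{p}-e_{q})=-(x_{p}-x_{q})^{2}M_{p,q}(\alpha+e_{p}-e_{q})+(\alpha_{p}+1)\alpha_{q},
\]
in which $L_{p,q}(\alpha)$ sits on the \emph{left}. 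With this version the whole proof is a reassociation of the triple product $L_{p,q}(\alpha)\,L_{q,p}(\alpha+e_p-e_q)\,L_{p,q}(\alpha)$: substitute, move the parentheses, and apply \eqref{eq:gel-toda-2} once more to the inner pair. Nothing residual remains to be checked.

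By contrast, your first route reduces \eqref{eq:gel-toda-4} to a genuine residual operator identity among the $L$'s, which you assert holds but justify only by saying that $L_{q,p}(\alpha+e_p-e_q)L_{p,q}(\alpha)$ acts by the scalar $(\alpha_p+1)\alpha_q$ on solutions and that $L_{p,q}(\alpha)$ ``intertwines the shifts.'' That justification is circular---the intertwining property is exactly what the lemma is being used to establish---and in any case an equality of actions on a solution space does not give an identity in the ring $\mathcal{R}$. The residual identity is in fact true: writing $B=L_{p,q}(\alpha)$ and $A=L_{q,p}(\alpha+e_p-e_q)$, your two substitutions reduce \eqref{eq:gel-toda-4} to $[B,A]B=(\alpha_p-\alpha_q+1)B-AB+B^{2}$, which follows from the computation $[B,A]=(x_p-x_q)(\partial_p+\partial_q)=-\bigl(A-\alpha_p-1\bigr)+\bigl(B-\alpha_q\bigr)$. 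But that commutator computation must actually appear; so either supply it, or switch to the reversed-order factorization above, or fall back on your direct expansion.
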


\begin{proof}
From (\ref{eq:gel-toda-2}) we can obtain
\begin{equation}
L_{p,q}(\alpha)L_{q,p}(\alpha+e_{p}-e_{q})=-(x_{q}-x_{p})^{2}M_{p,q}(\alpha+e_{p}-e_{q})+(\alpha_{p}+1)\alpha_{q}\label{eq:gel-toda-3}
\end{equation}
Indeed, we exchange the index $p\leftrightarrow q$ in (\ref{eq:gel-toda-2})
and note that $M_{p,q}(\alpha)=M_{q,p}(\alpha)$. Then we have 
\[
L_{p,q}(\alpha-e_{p}+e_{q})L_{q,p}(\alpha)=-(x_{q}-x_{p})^{2}M_{p,q}(\alpha)+(\alpha_{q}+1)\alpha_{p}.
\]
In this expression, we make a replacement $\alpha\to\alpha+e_{p}-e_{q}$ and
obtain (\ref{eq:gel-toda-3}). Using this identity, we have 
\begin{align*}
 & (x_{p}-x_{q})^{2}M_{p,q}(\alpha+e_{p}-e_{q})L_{p,q}(\alpha)\\
 & =\left\{ (\alpha_{p}+1)\alpha_{q}-L_{p,q}(\alpha)L_{q,p}(\alpha+e_{p}-e_{q})\right\} L_{p,q}(\alpha)\\
 & =(\alpha_{p}+1)\alpha_{q}L_{p,q}(\alpha)-L_{p,q}(\alpha)\left(L_{q,p}(\alpha+e_{p}-e_{q})L_{p,q}(\alpha)\right)\\
 & =(\alpha_{p}+1)\alpha_{q}L_{p,q}(\alpha)-L_{p,q}(\alpha)\left(-(x_{p}-x_{q})^{2}M_{p,q}(\alpha)+(\alpha_{p}+1)\alpha_{q}\right)\\
 & =L_{p,q}(\alpha)\cdot(x_{p}-x_{q})^{2}M_{p,q}(\alpha).
\end{align*}
 At the third equality, we used (\ref{eq:gel-toda-2}).
\end{proof}
Note that the indices $i$ and $j$ are fixed. To prove Proposition
\ref{prop:Gel-toda-1}, we check the assertion case by case. We want
to know under what condition $u\in\mathcal{S}_{p,q}(\alpha)$ is sent to $\mathcal{S}_{p,q}(\alpha+e_{i}-e_{j})$
by the operator $L_{i,j}(\alpha)$. 
\begin{lem}
\label{lem:Gel-toda-4}If $u\in\mathcal{S}_{i,j}(\alpha)$, then $L_{i,j}(\alpha)u\in\mathcal{S}_{i,j}(\alpha+e_{i}-e_{j})$. 
\end{lem}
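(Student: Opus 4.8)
The plan is to show that if $u$ satisfies the single EPD equation $M_{i,j}(\alpha)u = 0$, then $v := L_{i,j}(\alpha)u$ satisfies $M_{i,j}(\alpha + e_i - e_j)v = 0$. The key algebraic fact that makes this work is the intertwining identity already recorded in Lemma \ref{lem:Gel-toda-3}:
\[
(x_i - x_j)^2 M_{i,j}(\alpha + e_i - e_j)\, L_{i,j}(\alpha) = L_{i,j}(\alpha)\cdot (x_i - x_j)^2 M_{i,j}(\alpha),
\]
which exhibits $L_{i,j}(\alpha)$ as a morphism intertwining the EPD operators at parameters $\alpha$ and $\alpha + e_i - e_j$ (up to the nonvanishing factor $(x_i-x_j)^2$).

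The argument is then a one-line verification: apply both sides of this identity to $u$. On the right-hand side we get $L_{i,j}(\alpha)\cdot (x_i - x_j)^2 M_{i,j}(\alpha) u = L_{i,j}(\alpha)\cdot (x_i - x_j)^2 \cdot 0 = 0$, using the hypothesis $u \in \mathcal{S}_{i,j}(\alpha)$, i.e. $M_{i,j}(\alpha)u = 0$. Hence the left-hand side also vanishes: $(x_i - x_j)^2 M_{i,j}(\alpha + e_i - e_j)\bigl(L_{i,j}(\alpha)u\bigr) = 0$. Since we work on a domain $\Omega'$ avoiding the diagonals $x_a = x_b$, the factor $(x_i - x_j)^2$ is invertible there, so $M_{i,j}(\alpha + e_i - e_j)\bigl(L_{i,j}(\alpha)u\bigr) = 0$, which is precisely the statement $L_{i,j}(\alpha)u \in \mathcal{S}_{i,j}(\alpha + e_i - e_j)$.

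There is essentially no obstacle here, since the heavy lifting was done in Lemmas \ref{lem:Gel-toda-2} and \ref{lem:Gel-toda-3}; this lemma is the immediate corollary that handles the ``diagonal'' pair $(p,q) = (i,j)$. The only point worth stating carefully is the invertibility of $(x_i - x_j)^2$ on $\Omega'$, which justifies cancelling it. The genuinely nontrivial cases — showing $L_{i,j}(\alpha)u \in \mathcal{S}_{p,q}(\alpha + e_i - e_j)$ when $\{p,q\} \neq \{i,j\}$, for instance when $p,q,i,j$ overlap in one index or are all distinct — require the commutation relations among the operators $M_{p,q}(\alpha)$ and $L_{i,j}(\alpha)$ (ultimately traceable to the $\mathfrak{gl}(N)$ root-vector structure and the S-pair identity of Proposition \ref{prop:EPD-1}), and are treated in the lemmas that follow.
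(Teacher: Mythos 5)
Your proof is correct and is essentially identical to the paper's: both apply the intertwining identity of Lemma \ref{lem:Gel-toda-3} to $u$, use $M_{i,j}(\alpha)u=0$ to kill the right-hand side, and cancel the nonvanishing factor $(x_i-x_j)^2$. Your explicit remark on the invertibility of $(x_i-x_j)^2$ on $\Omega'$ is a point the paper leaves implicit but is otherwise the same argument.
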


\begin{proof}
For $u\in\mathcal{S}_{i,j}(\alpha)$, we show that $v=L_{i,j}(\alpha)u$ satisfies
$M_{i,j}(\alpha+e_{i}-e_{j})v=0$. In fact, by virtue of Lemma \ref{lem:Gel-toda-3},
we have 
\begin{align*}
(x_{i}-x_{j})^{2}M_{i,j}(\alpha+e_{i}-e_{j})v & =(x_{i}-x_{j})^{2}M_{i,j}(\alpha+e_{i}-e_{j})L_{i,j}(\alpha)u\\
 & =L_{i,j}(\alpha)\cdot(x_{i}-x_{j})^{2}M_{i,j}(\alpha)u\\
 & =0
\end{align*}
since $M_{i,j}(\alpha)u=0$ by the assumption. This proves the lemma.
\end{proof}
\begin{lem}
\label{lem:Gel-toda-5}In the case $\{i,j\}\cap\{p,q\}=\emptyset$,
the correspondence $u\mapsto L_{i,j}(\alpha)u$ gives a linear map $\mathcal{S}_{p,q}(\alpha)\to\mathcal{S}_{p,q}(\alpha+e_{i}-e_{j})$. 
\end{lem}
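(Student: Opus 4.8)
The plan is to verify directly that $L_{i,j}(\alpha)$ preserves the solution space of the single EPD equation $M_{p,q}(\alpha)u=0$ when the index set $\{i,j\}$ is disjoint from $\{p,q\}$. The key observation is that in this disjoint case the operator $L_{i,j}(\alpha)=(x_i-x_j)\partial_j+\alpha_j$ and the operator $M_{p,q}(\alpha)=\partial_p\partial_q+\frac{\alpha_q}{x_p-x_q}\partial_p+\frac{\alpha_p}{x_q-x_p}\partial_q$ involve four distinct variables, and moreover the coefficients of $L_{i,j}(\alpha)$ depend only on $x_i,x_j$ while the coefficients of $M_{p,q}(\alpha)$ depend only on $x_p,x_q$. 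Hence I expect $L_{i,j}(\alpha)$ and $M_{p,q}(\alpha)$ to commute as elements of the ring $\mathcal{R}$. Note also that $M_{p,q}(\alpha+e_i-e_j)=M_{p,q}(\alpha)$, since the parameters $\alpha_p,\alpha_q$ appearing in $M_{p,q}$ are unchanged when we shift only $\alpha_i$ and $\alpha_j$.

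First I would record the commutation identity
\[
M_{p,q}(\alpha)\,L_{i,j}(\alpha)=L_{i,j}(\alpha)\,M_{p,q}(\alpha)
\qquad\text{when }\{i,j\}\cap\{p,q\}=\emptyset,
\]
which follows because $[\partial_j,x_i-x_j]=-1$ commutes past $\partial_p,\partial_q$ and the $x_p,x_q$-coefficients, and $\partial_j$ annihilates the coefficients $\frac{\alpha_q}{x_p-x_q}$ and $\frac{\alpha_p}{x_q-x_p}$. Then, given $u\in\mathcal{S}_{p,q}(\alpha)$, I would set $v=L_{i,j}(\alpha)u$ and compute
\[
M_{p,q}(\alpha+e_i-e_j)\,v
= M_{p,q}(\alpha)\,L_{i,j}(\alpha)\,u
= L_{i,j}(\alpha)\,M_{p,q}(\alpha)\,u
= L_{i,j}(\alpha)\cdot 0 = 0,
\]
so that $v\in\mathcal{S}_{p,q}(\alpha+e_i-e_j)$. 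Linearity of the correspondence $u\mapsto L_{i,j}(\alpha)u$ is immediate since $L_{i,j}(\alpha)$ is a linear differential operator.

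The only genuinely delicate point is the verification of the commutation identity, but this is a short computation using $\partial_j(x_i-x_j)=-1$ and the fact that $\partial_j$ kills every coefficient of $M_{p,q}(\alpha)$; no cancellation miracle is needed because the two operators act on disjoint sets of variables apart from the harmless constant term $\alpha_j$, which commutes with everything. So I do not anticipate any real obstacle here; this lemma is the easy case among the three that together establish Proposition~\ref{prop:Gel-toda-1}, the harder cases being Lemma~\ref{lem:Gel-toda-4} (already done) and the cases where exactly one of $p,q$ coincides with one of $i,j$, which require the nontrivial identities of Lemmas~\ref{lem:Gel-toda-2} and~\ref{lem:Gel-toda-3}.
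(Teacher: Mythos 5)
Your argument is correct and is essentially the paper's own proof: both observe that $M_{p,q}(\alpha+e_i-e_j)=M_{p,q}(\alpha)$ and that $[M_{p,q}(\alpha),L_{i,j}(\alpha)]=0$ because the two operators involve disjoint variables, then conclude by applying $L_{i,j}(\alpha)$ to $M_{p,q}(\alpha)u=0$. No further comment is needed.
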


\begin{proof}
Since $\{i,j\}\cap\{p,q\}=\emptyset$, $M_{p,q}(\alpha+e_{i}-e_{j})=M_{p,q}(\alpha)$
and hence $\mathcal{S}_{p,q}(\alpha+e_{i}-e_{j})=\mathcal{S}_{p,q}(\alpha)$. Note that
\[
[M_{p,q}(\alpha),L_{i,j}(\alpha)]=\left[\partial_{p}\partial_{q}+\frac{\alpha_{q}}{x_{p}-x_{q}}\partial_{p}+\frac{\alpha_{p}}{x_{q}-x_{p}}\partial_{q},(x_{i}-x_{j})\partial_{j}+\alpha_{j}\right]=0.
\]
Then, for $u\in\mathcal{S}_{p,q}(\alpha)$, $v:=L_{i,j}(\alpha)u$ satisfies
\[
M_{p,q}(\alpha+e_{i}-e_{j})v=M_{p,q}(\alpha)L_{i,j}(\alpha)u=L_{i,j}(\alpha)M_{p,q}(\alpha)u=0.
\]
This implies $v\in\mathcal{S}_{p,q}(\alpha+e_{i}-e_{j})$. 
\end{proof}
Next we treat the case $\#(\{i,j\}\cap\{p,q\})=1$. Then $i\in\{p,q\}$
or $j\in\{p,q\}$. Noting $\mathcal{S}_{p,q}(\alpha)=\mathcal{S}_{q,p}(\alpha)$, we may assume
that $p=i$ and $q\ne i,j$ in the case $i\in\{p,q\}$, and $p=j$
and $q\ne i,j$ in the case $j\in\{p,q\}$. Let $\mathcal{R}$ be the ring
of differential operators defined in Section \ref{subsec:Compatibility}.
For $P\in\mathcal{R}$, we denote by $\mathcal{R}\cdot P$ the left ideal of $\mathcal{R}$
generated by $P$. 
\begin{lem}
\label{lem:Gel-toda-6}For any distinct $1\leq p,q,r\leq N$, we have
\begin{align}
L_{p,q}(\alpha+e_{q})L_{q,r}(\alpha) & \equiv (\alpha_{q}+1)L_{p,r}(\alpha)\;\text{modulo }\;\mathcal{R}\cdot M_{q,r}(\alpha).\label{eq:gel-toda-6}\\
L_{q,r}(\alpha)L_{p,q}(\alpha) & \equiv\alpha_{q}L_{p,r}(\alpha)\;\text{modulo}\;\mathcal{R}\cdot M_{q,r}(\alpha).\label{eq:gel-toda-7}
\end{align}
\end{lem}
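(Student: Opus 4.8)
The plan is to prove both congruences by a direct computation of the composite first‑order operators, taking advantage of the fact that $L_{p,q}(\alpha)=(x_p-x_q)\partial_q+\alpha_q$ depends on $\alpha$ only through its $q$‑component, so that $L_{p,q}(\alpha+e_q)=(x_p-x_q)\partial_q+(\alpha_q+1)$. First I would expand $L_{p,q}(\alpha+e_q)L_{q,r}(\alpha)=\bigl((x_p-x_q)\partial_q+(\alpha_q+1)\bigr)\bigl((x_q-x_r)\partial_r+\alpha_r\bigr)$ in the ring $\mathcal{R}$, being careful that $\partial_q$ produces a commutator term when it meets the coefficient $x_q-x_r$, while $\partial_r$ annihilates $x_p-x_q$ precisely because $p,q,r$ are distinct. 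The second‑order part produced is $(x_p-x_q)(x_q-x_r)\partial_q\partial_r$, and this dictates the right multiple of $M_{q,r}(\alpha)$ to subtract off, namely $(x_p-x_q)(x_q-x_r)M_{q,r}(\alpha)$, which manifestly lies in the left ideal $\mathcal{R}\cdot M_{q,r}(\alpha)$.

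Next I would subtract $(x_p-x_q)(x_q-x_r)M_{q,r}(\alpha)$ from the expansion. The second‑order terms cancel by design; grouping what remains, the coefficient of $\partial_q$ cancels, the coefficient of $\partial_r$ collapses to $(\alpha_q+1)\bigl((x_p-x_q)+(x_q-x_r)\bigr)=(\alpha_q+1)(x_p-x_r)$ via the telescoping identity $(x_p-x_q)+(x_q-x_r)=x_p-x_r$, and the constant term is $(\alpha_q+1)\alpha_r$; together these are exactly $(\alpha_q+1)L_{p,r}(\alpha)$, which gives $(\ref{eq:gel-toda-6})$. The identity $(\ref{eq:gel-toda-7})$ follows the same pattern: expand $L_{q,r}(\alpha)L_{p,q}(\alpha)$ (here the outer derivative $\partial_r$ sees only the $r$‑free coefficient $x_p-x_q$, so no commutator term appears), subtract the same multiplier $(x_p-x_q)(x_q-x_r)M_{q,r}(\alpha)$, and the remainder telescopes in the identical way to $\alpha_q L_{p,r}(\alpha)$.

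I do not anticipate a genuine obstacle; the work is bookkeeping, and the only delicate point is keeping track of the single commutator $[\partial_q,x_q-x_r]=1$ that occurs in the first product but not in the second. As a consistency check one can apply both sides to the Gelfand HGF: by Proposition \ref{prop:Conti-4} each side of $(\ref{eq:gel-toda-6})$ sends $\Phi(x;\alpha)$ to $(\alpha_q+1)\alpha_r\,\Phi(x;\alpha+e_p-e_r)$, and each side of $(\ref{eq:gel-toda-7})$ sends it to $\alpha_q\alpha_r\,\Phi(x;\alpha+e_p-e_r)$ — though this agreement on a single solution is only a sanity check and does not replace the operator identity modulo $\mathcal{R}\cdot M_{q,r}(\alpha)$.
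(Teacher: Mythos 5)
Your proposal is correct and follows essentially the same route as the paper: expand the product of first-order operators (keeping the single commutator $[\partial_q,x_q-x_r]=1$ in the first case), subtract $(x_p-x_q)(x_q-x_r)M_{q,r}(\alpha)$ to remove the second-order term, and observe that the remainder telescopes to $(\alpha_q+1)L_{p,r}(\alpha)$ (resp.\ $\alpha_q L_{p,r}(\alpha)$). The consistency check against $\Phi(x;\alpha)$ is a nice extra but, as you note, not part of the proof.
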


\begin{proof}
We show (\ref{eq:gel-toda-6}). Noting
\[
M_{q,r}(\alpha)=\partial_{q}\partial_{r}+\frac{\alpha_{r}}{x_{q}-x_{r}}\partial_{q}+\frac{\alpha_{q}}{x_{r}-x_{q}}\partial_{r},
\]
we have
\begin{align*}
 & L_{p,q}(\alpha+e_{q})L_{q,r}(\alpha)\\
 & =\left((x_{p}-x_{q})\partial_{q}+\alpha_{q}+1\right)\left((x_{q}-x_{r})\partial_{r}+\alpha_{r}\right)\\
 & =(x_{p}-x_{q})(x_{q}-x_{r})\partial_{q}\partial_{r}+(x_{p}-x_{q})\partial_{r}+(\alpha_{q}+1)(x_{q}-x_{r})\partial_{r}\\
 & \quad+\alpha_{r}(x_{p}-x_{q})\partial_{q}+(\alpha_{q}+1)\alpha_{r}\\
 & \equiv(x_{p}-x_{q})(x_{q}-x_{r})\left\{ -\frac{\alpha_{r}}{x_{q}-x_{r}}\partial_{q}-\frac{\alpha_{q}}{x_{r}-x_{q}}\partial_{r}\right\} +(x_{p}-x_{q})\partial_{r}\\
 & \quad+(\alpha_{q}+1)(x_{q}-x_{r})\partial_{r}+\alpha_{r}(x_{p}-x_{q})\partial_{q}+(\alpha_{q}+1)\alpha_{r}\\
 & =(\alpha_{q}+1)L_{p,r}(\alpha).
\end{align*}
The formula (\ref{eq:gel-toda-7}) is shown in a similar way. 
\end{proof}
Using Lemma \ref{lem:Gel-toda-6}, we show the following, which will
complete the proof of Proposition \ref{prop:Gel-toda-1}.
\begin{lem}
Assume that $1\leq i,j,q\leq N$ are distinct. If $u\in\mathcal{S}_{i,j}(\alpha)\cap\mathcal{S}_{i,q}(\alpha)\cap\mathcal{S}_{j,q}(\alpha)$,
then $v=L_{i,j}(\alpha)u$ belongs to $\mathcal{S}_{i,j}(\alpha+e_{i}-e_{j})\cap\mathcal{S}_{j,q}(\alpha+e_{i}-e_{j})\cap\mathcal{S}_{i,q}(\alpha+e_{i}-e_{j})$.
\end{lem}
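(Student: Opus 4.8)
The plan is to check the three membership conditions for $v:=L_{i,j}(\alpha)u$ one at a time, with $\alpha':=\alpha+e_i-e_j$. Membership $v\in\mathcal{S}_{i,j}(\alpha')$ is exactly Lemma \ref{lem:Gel-toda-4} applied to the fixed pair $(i,j)$. Membership $v\in\mathcal{S}_{i,q}(\alpha')$ will be \emph{deduced} from the other two: reading the S-pair identity of Proposition \ref{prop:EPD-1} with parameter $\alpha'$ and indices $(i,j,q)$ in place of $(i,j,k)$, and dividing by the unit $\alpha'_j\,(x_q-x_i)/((x_i-x_j)(x_j-x_q))$ of $\mathcal{R}$, one gets $M_{i,q}(\alpha')\in\mathcal{R}\cdot M_{i,j}(\alpha')+\mathcal{R}\cdot M_{j,q}(\alpha')$; here $\alpha'_j=\alpha_j-1\neq 0$ because $\alpha_j\notin\mathbb{Z}$. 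Hence any holomorphic function annihilated by $M_{i,j}(\alpha')$ and $M_{j,q}(\alpha')$ is annihilated by $M_{i,q}(\alpha')$ too, so once the first two memberships are known, the third is automatic. The real content is therefore the middle statement $M_{j,q}(\alpha')v=0$.

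For that I would use repeatedly the elementary observation that $L_{p,q}(\beta)=(x_p-x_q)\partial_q+\beta_q$ depends on $\beta$ only through its $q$-th entry, so that all the parameter shifts occurring below are invisible to the relevant $L$-operators: since $q\neq i,j$ we have $L_{j,q}(\alpha')=L_{j,q}(\alpha)$ and $L_{q,j}(\alpha+e_i-e_q)=L_{q,j}(\alpha)$. First apply the composition formula (\ref{eq:gel-toda-7}) of Lemma \ref{lem:Gel-toda-6} with $(p,q,r)=(i,j,q)$, namely $L_{j,q}(\alpha)L_{i,j}(\alpha)\equiv\alpha_j L_{i,q}(\alpha)$ modulo $\mathcal{R}\cdot M_{j,q}(\alpha)$; evaluating at $u$ and using $M_{j,q}(\alpha)u=0$ yields $L_{j,q}(\alpha')v=\alpha_j L_{i,q}(\alpha)u$. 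Next apply $L_{q,j}(\alpha'+e_j-e_q)=L_{q,j}(\alpha)$ to both sides (note $\alpha'+e_j-e_q=\alpha+e_i-e_q$) and invoke (\ref{eq:gel-toda-7}) once more, this time with $(p,q,r)=(i,q,j)$ together with $M_{q,j}(\alpha)u=0$, to get $L_{q,j}(\alpha'+e_j-e_q)L_{j,q}(\alpha')v=\alpha_j\alpha_q L_{i,j}(\alpha)u=\alpha_j\alpha_q v$. Finally, Lemma \ref{lem:Gel-toda-2} for the pair $(j,q)$ at parameter $\alpha'$ gives $L_{q,j}(\alpha'+e_j-e_q)L_{j,q}(\alpha')=-(x_j-x_q)^2M_{j,q}(\alpha')+(\alpha'_j+1)\alpha'_q$, and $(\alpha'_j+1)\alpha'_q=\alpha_j\alpha_q$; comparing with the previous line forces $(x_j-x_q)^2M_{j,q}(\alpha')v=0$, hence $M_{j,q}(\alpha')v=0$.

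The step I expect to be the main obstacle is precisely choosing this chain of identities and keeping the parameter bookkeeping straight, so that at each stage the hypotheses $M_{i,j}(\alpha)u=0$ and $M_{j,q}(\alpha)u=0$ can genuinely be invoked; the collapse of the intermediate shifts through the ``only the $q$-entry matters'' remark is what makes everything line up, and no genericity beyond the standing assumption $\alpha_j\notin\mathbb{Z}$ is needed. (In fact this argument uses only $u\in\mathcal{S}_{i,j}(\alpha)\cap\mathcal{S}_{j,q}(\alpha)$; the extra hypothesis $u\in\mathcal{S}_{i,q}(\alpha)$ is then itself a consequence via Proposition \ref{prop:EPD-1}.) With $M_{i,j}(\alpha')v=M_{j,q}(\alpha')v=0$ established and $M_{i,q}(\alpha')v=0$ obtained as in the first paragraph, the lemma follows, and combined with Lemmas \ref{lem:Gel-toda-4} and \ref{lem:Gel-toda-5} it shows that $L_{i,j}(\alpha)$ maps $\mathcal{S}(\alpha)$ into $\mathcal{S}(\alpha+e_i-e_j)$, completing the proof of Proposition \ref{prop:Gel-toda-1}.
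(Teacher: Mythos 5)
Your argument is correct, and for two of the three memberships it coincides with the paper's. The membership $v\in\mathcal{S}_{i,j}(\alpha+e_{i}-e_{j})$ is Lemma \ref{lem:Gel-toda-4} in both treatments, and your proof of $M_{j,q}(\alpha+e_{i}-e_{j})v=0$ is the paper's argument in only slightly different clothing: both hinge on Lemma \ref{lem:Gel-toda-2} for the pair $(j,q)$ at the shifted parameter together with two applications of (\ref{eq:gel-toda-7}), and both exploit the same cancellation $(\alpha'_{j}+1)\alpha'_{q}=\alpha_{j}\alpha_{q}$; you evaluate the operator identities on $u$ step by step where the paper keeps everything as congruences modulo $\mathcal{R}\cdot M_{i,j}(\alpha)+\mathcal{R}\cdot M_{q,j}(\alpha)$, which is a cosmetic difference. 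The genuine divergence is the case $M_{i,q}(\alpha+e_{i}-e_{j})v=0$: the paper proves it by another direct operator computation, using (\ref{eq:gel-toda-3}) with $\beta=\alpha+e_{q}$ followed by (\ref{eq:gel-toda-6}), with no restriction on the parameters; you instead extract from the S-pair identity of Proposition \ref{prop:EPD-1}, read at the shifted parameter with indices $(i,j,q)$, the left-ideal membership $M_{i,q}(\alpha')\in\mathcal{R}\cdot M_{i,j}(\alpha')+\mathcal{R}\cdot M_{j,q}(\alpha')$ and conclude for free. Your route is shorter and makes transparent why the three conditions are not independent, but it costs the nondegeneracy condition $\alpha'_{j}=\alpha_{j}-1\neq0$ (and your closing remark, that the hypothesis $u\in\mathcal{S}_{i,q}(\alpha)$ is redundant, likewise needs $\alpha_{j}\neq0$); both are guaranteed by the standing assumption $\alpha_{j}\notin\mathbb{Z}$, so nothing is lost in the context where the lemma is used, although the paper's computation establishes the statement for arbitrary $\alpha$.
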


\begin{proof}
The fact $v\in\mathcal{S}_{i,j}(\alpha+e_{i}-e_{j})$ is already shown in Lemma
\ref{lem:Gel-toda-4}. We shall show $v\in\mathcal{S}_{i,q}(\alpha+e_{i}-e_{j})$.
Note that $M_{i,q}(\alpha+e_{i}-e_{j})=M_{i,q}(\alpha+e_{i})$ and hence the
equality $\mathcal{S}_{i,q}(\alpha+e_{i}-e_{j})=\mathcal{S}_{i,q}(\alpha+e_{i})$ holds. Put
$\beta=\alpha+e_{q}$. Then, using (\ref{eq:gel-toda-3}) replacing $\alpha$
with $\beta$, we have 
\begin{align*}
 & (x_{i}-x_{q})^{2}M_{i,q}(\alpha+e_{i})L_{i,j}(\alpha)\\
 & =(x_{i}-x_{q})^{2}M_{i,q}(\beta+e_{i}-e_{q})L_{i,j}(\alpha)\\
 & =\left\{ (\beta_{i}+1)\beta_{q}-L_{q,i}(\beta)L_{i,q}(\beta+e_{i}-e_{q})\right\} L_{i,j}(\alpha)\\
 & =(\beta_{i}+1)\beta_{q}L_{i,j}(\alpha)-L_{i,q}(\beta)\left\{ L_{q,i}(\beta+e_{i}-e_{q})L_{i,j}(\alpha)\right\} .
\end{align*}
By applying (\ref{eq:gel-toda-6}) of Lemma \ref{lem:Gel-toda-6},
the second term of the last line above is written as 
\begin{align*}
 & L_{i,q}(\beta)\Bigl\{ L_{q,i}(\beta+e_{i}-e_{q})L_{i,j}(\alpha)\Bigr\} \\
 & =L_{i,q}(\alpha+e_{q})\Bigl\{ L_{q,i}(\alpha+e_{i})L_{i,j}(\alpha)\Bigr\} \\
 & \equiv (\alpha_{i}+1)L_{i,q}(\alpha+e_{q})L_{q,j}(\alpha)\quad\text{modulo}\;\mathcal{R}\cdot M_{i,j}(\alpha)\\
 & \equiv (\alpha_{q}+1)(\alpha_{i}+1)L_{i,j}(\alpha)\quad\text{modulo}\;\mathcal{R}\cdot M_{q,j}(\alpha).
\end{align*}
Thus we have 
\begin{align*}
(x_{i}-x_{q})^{2}M_{i,q}(\alpha+e_{i})L_{i,j}(\alpha) & \equiv\beta_{q}(\beta_{i}+1)L_{i,j}(\alpha)-(\alpha_{q}+1)(\alpha_{i}+1)L_{i,j}(\alpha)\\
 & =0\quad\text{modulo}\;\mathcal{R}\cdot M_{i,j}(\alpha)+\mathcal{R}\cdot M_{q,j}(\alpha)
\end{align*}
since $\beta_{i}=\alpha_{i},\beta_{q}=\alpha_{q}+1$. Then, for $u\in\mathcal{S}_{i,j}(\alpha)\cap\mathcal{S}_{i,q}(\alpha)\cap\mathcal{S}_{j,q}(\alpha)$,
we have 
\[
(x_{i}-x_{q})^{2}M_{i,q}(\alpha+e_{i}-e_{j})v=(x_{i}-x_{q})^{2}M_{i,q}(\alpha+e_{i})L_{i,j}(\alpha)u=0.
\]
This implies $v\in\mathcal{S}_{i,q}(\alpha+e_{i}-e_{j})$.

Next we show that $v\in\mathcal{S}_{j,q}(\alpha+e_{i}-e_{j})$. Note that $\mathcal{S}_{j,q}(\alpha+e_{i}-e_{j})=\mathcal{S}_{j,q}(\alpha-e_{j})$
in this case. Put $\beta=\alpha-e_{j}$. Then applying Lemma \ref{lem:Gel-toda-2},
we have 
\begin{align*}
 & (x_{j}-x_{q})^{2}M_{j,q}(\alpha-e_{j})L_{i,j}(\alpha)=(x_{j}-x_{q})^{2}M_{j,q}(\beta)L_{i,j}(\alpha)\\
 & =\left\{ (\beta_{j}+1)\beta_{q}-L_{q,j}(\beta+e_{j}-e_{q})L_{j,q}(\beta)\right\} L_{i,j}(\alpha)\\
 & =\alpha_{j}\alpha_{q}L_{i,j}(\alpha)-L_{q,j}(\beta+e_{j}-e_{q})\left\{ L_{j,q}(\beta)L_{i,j}(\alpha)\right\} .
\end{align*}
Noting that $L_{j,q}(\beta)=L_{j,q}(\alpha)$ and $L_{q,j}(\beta+e_{j}-e_{q})=L_{q,j}(\alpha)$,
and applying (\ref{eq:gel-toda-7}) of Lemma \ref{lem:Gel-toda-6},
the second term of the last line above is written as 
\begin{align*}
L_{q,j}(\alpha)\{ L_{j,q}(\alpha)L_{i,j}(\alpha) \}  & \equiv\alpha_{j}L_{q,j}(\alpha)L_{i,q}(\alpha)\;\text{modulo}\;\mathcal{R}\cdot M_{q,j}(\alpha)\\
 & \equiv\alpha_{j}\alpha_{q}L_{i,j}(\alpha)\;\text{modulo}\;\mathcal{R}\cdot M_{i,j}(\alpha).
\end{align*}
Thus we have 
\[
(x_{j}-x_{q})^{2}M_{j,q}(\alpha-e_{j})L_{i,j}(\alpha)\equiv\alpha_{j}\alpha_{q}L_{i,j}(\alpha)-\alpha_{j}\alpha_{q}L_{i,j}(\alpha)=0
\]
modulo $\mathcal{R}\cdot M_{i,j}(\alpha)+\mathcal{R}\cdot M_{q,j}(\alpha)$. It follows that
\[
(x_{j}-x_{q})^{2}M_{j,q}(\alpha+e_{i}-e_{j})v=(x_{j}-x_{q})^{2}M_{j,q}(\alpha-e_{j})L_{i,j}(\alpha)u=0
\]
since $u\in\mathcal{S}_{i,j}(\alpha)\cap\mathcal{S}_{i,q}(\alpha)\cap\mathcal{S}_{j,q}(\alpha)$ and hence
$M_{i,j}(\alpha)u=M_{q,j}(\alpha)u=0$ holds. This proves $v\in\mathcal{S}_{j,q}(\alpha+e_{i}-e_{j})$.
\end{proof}

\subsection{Hypergeometric solution to the 2dTHE}

Now we can construct a solution of 2dTHE expressed in terms of the
Gelfand HGF. Consider the sequence $\{\mathcal{M}_{n}(\alpha)\}_{n\in\mathbb{Z}}$ of
the EPD equations:
\[
\mathcal{M}_{n}(\alpha):M_{p,q}(\alpha+n(e_{i}-e_{j}))u=0,\quad1\leq p\neq q\leq N.
\]
For the sake of brevity, we denote $M_{p,q}(\alpha+n(e_{i}-e_{j}))$ as
$M_{n;p,q}(\alpha)$. The set of holomorphic solutions of the system $\mathcal{M}_{n}(\alpha)$
is $\mathcal{S}(\alpha+n(e_{i}-e_{j}))$. Proposition \ref{prop:Gel-toda-1} says
that the operators $L_{i,j}(\cdot),L_{j,i}(\cdot)$ induce the map
\begin{align*}
H_{n} & :\mathcal{S}(\alpha+n(e_{i}-e_{j}))\to\mathcal{S}(\alpha+(n+1)(e_{i}-e_{j})),\\
B_{n} & :\mathcal{S}(\alpha+n(e_{i}-e_{j}))\to\mathcal{S}(\alpha+(n-1)(e_{i}-e_{j}))
\end{align*}
satisfying $B_{n+1}H_{n}=1,\;H_{n-1}B_{n}=1$ on $\mathcal{S}(\alpha+n(e_{i}-e_{j}))$,
where
\begin{align*}
H_{n} & =L_{i,j}(\alpha+n(e_{i}-e_{j}))= (x_{i}-x_{j})\partial_{j}+\alpha_{j}-n ,\\
B_{n} & =\frac{1}{(\alpha_{i}+n)(\alpha_{j}-n+1)}L_{j,i}(\alpha+n(e_{i}-e_{j}))\\
 & =\frac{1}{(\alpha_{i}+n)(\alpha_{j}-n+1)}\left\{ (x_{j}-x_{i})\partial_{i}+\alpha_{i}+n\right\} .
\end{align*}
We know that, for the EPD operator
\[
M_{n;i,j}(\alpha)=\partial_{i}\partial_{j}+\frac{\alpha_{j}-n}{x_{i}-x_{j}}\partial_{i}+\frac{\alpha_{i}+n}{x_{j}-x_{i}}\partial_{j},
\]
its normal form in the sense of Lemma \ref{lem:norm-1} is given by
\[
M_{n;i,j}'(\alpha)=\partial_{i}\partial_{j}+\frac{\alpha_{j}-\alpha_{i}-2n}{x_{i}-x_{j}}\partial_{i}+\frac{(\alpha_{i}+n)(\alpha_{j}-n+1)}{(x_{i}-x_{j})^{2}}.
\]
 Recall that the normal form $M_{n;i,j}'(\alpha)$ is obtained from $M_{n;i,j}(\alpha)$
as
\[
M_{n;i,j}'(\alpha)=(\mathrm{Ad}\, g_{n})M_{n;i,j}(\alpha):=g_{n}\cdot M_{n;i,j}(\alpha)\cdot g_{n}^{-1}
\]
with $g_{n}(x)=(x_{i}-x_{j})^{-(\alpha_{i}+n)}$. Thus we have the diagram
\begin{equation}
\begin{CD}M_{n+1;i,j}(\alpha)@>\mathrm{Ad}\, g_{n+1}>>M_{n+1;i,j}'(\alpha)\\
@AH_{n}AA@AAH_{n}'A\\
M_{n;i,j}(\alpha)@>\mathrm{Ad}\, g_{n}>>M_{n;i,j}'(\alpha)\\
@VB_{n}VV@VVB_{n}'V\\
M_{n-1;i,j}(\alpha)@>\mathrm{Ad}\, g_{n-1}>>M_{n-1;i,j}'(\alpha)
\end{CD}\label{eq:gel-toda-5}
\end{equation}
where the vertical arrow $H_{n}$ implies that the operator $M_{n+1;i,j}(\alpha)$
is determined from $M_{n;i,j}(\alpha)$ by the change of unknown $u\mapsto u'=L_{i,j}(\alpha+n(e_{i}-e_{j}))u$
for $M_{n;i,j}(\alpha)u=0$. In this situation, we can determine the operator
$H_{n}'$ so that the above diagram is commutative. We can show that
$H_{n}'$ is determined as 
\[
H_{n}'=\partial_{j}+\frac{\alpha_{j}-\alpha_{i}-2n}{x_{i}-x_{j}}.
\]
In fact, take a solution $v_{n}$ of $M_{n;i,j}'(\alpha)v=0$, then $u_{n}:=g_{n}^{-1}v_{n}$
is a solution of $M_{n;i,j}(\alpha)u=0$. Put $u_{n+1}=H_{n}u_{n}$ and
$v_{n+1}:=g_{n+1}u_{n+1}$. Then we see that $M_{n+1;i,j}'(\alpha)v_{n+1}=0$.
If the diagram (\ref{eq:gel-toda-5}) is commutative, $v_{n+1}$ should
be obtained as $v_{n+1}=H_{n}'v_{n}$. Since 
\[
v_{n+1}=g_{n+1}u_{n+1}=g_{n+1}H_{n}u_{n}=(g_{n+1}\cdot H_{n}\cdot g_{n}^{-1})v_{n},
\]
we should have
\begin{align*}
H_{n}' & =g_{n+1}\cdot H_{n}\cdot g_{n}^{-1}.\\
 & =(x_{i}-x_{j})^{-(\alpha_{i}+n+1)}\left\{ (x_{i}-x_{j})\partial_{j}+\alpha_{j}-n\right\} (x_{i}-x_{j})^{\alpha_{i}+n}\\
 & =(x_{i}-x_{j})^{-(\alpha_{i}+n)}\cdot\partial_{j}\cdot(x_{i}-x_{j})^{\alpha_{i}+n}+\frac{\alpha_{j}-n}{x_i -x_j}\\
 & =\partial_{j}+\frac{\alpha_{j}-\alpha_{i}-2n}{x_{i}-x_{j}}.
\end{align*}
This is just the contiguity operator (\ref{eq:darboux-1}) discussed
in Section \ref{subsec:Darboux-transformation}. Similarly, we can
determine $B_{n}'$ as 
\[
B_{n}'=g_{n-1}\cdot B_{n}\cdot g_{n}^{-1}=-\frac{(x_{i}-x_{j})^{2}}{(\alpha_{i}+n)(\alpha_{j}-n+1)}\partial_{i},
\]
which is just the contiguity operator (\ref{eq:darboux-1}) for the
Laplace sequence $\{M'_{n;i,j}(\alpha)\}$.

For a given $u_{0}(x)\in\mathcal{S}(\alpha)$, we define $\{u_{n}(x)\}_{n\in\mathbb{Z}}$,
$u_{n}\in\mathcal{S}(\alpha+n(e_{i}-e_{j}))$, by $u_{n+1}=H_{n}u_{n}\;(n\geq0)$
and $u_{n-1}=B_{n}u_{n}\;(n\leq0)$. Putting $u_{n}'(x):=g_{n}(x)u_{n}(x)$
with $g_{n}(x)=(x_{i}-x_{j})^{-(\alpha_{i}+n)}$, we have $M_{n;i,j}'(\alpha)u_{n}'=0$
for the Laplace sequence $\{M_{n;i,j}'(\alpha)\}_{n\in\mathbb{Z}}$ such that
$u_{n+1}'=H_{n}'u_{n}'$ and $u_{n-1}'=B_{n}'u_{n}'$ for all $n\in\mathbb{Z}$.
To obtain a solution to the 2dTHE
\begin{equation}
\partial_{i}\partial_{j}\log\tau_{n}=\frac{\tau_{n+1}\tau_{n-1}}{\tau^{2}},\quad n\in\mathbb{Z},\label{eq:gel-toda-9}
\end{equation}
we apply Proposition \ref{prop:Backlund-2} with the seed solution
obtained in Proposition \ref{prop:Laplace-3}. Here the seed solution
is $t_{n}=t_{n}(\alpha_{i},\alpha_{j};x_{i},x_{j})$, where
\[
t_{n}(\alpha,\beta;x,y)=B(\alpha,\beta;n)(x-y)^{p(\alpha,\beta;n)}
\]
with 
\begin{align*}
p(\alpha,\beta;n) & =(\alpha+n)(\beta-n+1),\\
B(\alpha,\beta;n) & =\begin{cases}
A^{n}\prod_{k=0}^{n-1}\left(\prod_{l=1}^{k}p(\alpha_{i},\alpha_{j};l)\right), & n\geq2,\\
A^{n}\prod_{k=1}^{|n|}\left(\prod_{l=-k+1}^{0}p(\alpha_{i},\alpha_{j};l)\right), & n\leq-1,
\end{cases}
\end{align*}
$B(\alpha,\beta;0)=1,B(\alpha,\beta;1)=A$, $A$ being an arbitrary constant. Then
we obtain the solution $\{\tau_{n}\}_{n\in\mathbb{Z}}$ to the 2dTHE (\ref{eq:gel-toda-9})
given by $\tau_{n}(x)=t_{n}(\alpha_{i},\alpha_{j};x_{i},x_{j})(x_{i}-x_{j})^{-(\alpha_{i}+n)}u_{n}(x)$.

In the above setting, as a particular case, we can take $u_{0}(x)$
as $u_{0}(x)=\Phi(x;\alpha)$ which is the Gelfand HGF $F(\mathbf{x};\alpha)$, see
(\ref{eq:red-0}). Then we can show 
\[
u_{n}(x)=\frac{\Gamma(\alpha_{j}+1)}{\Gamma(\alpha_{j}-n+1)}\Phi(x;\alpha+n(e_{i}-e_{j}))
\]
by using the contiguity relation (\ref{eq:2-red-4}) for $\Phi(x;\alpha)$.
Summarizing the above argument, we have following result.
\begin{thm}
\label{thm:main}We fix any pair $(i,j)$ such that $1\leq i\ne j\leq N$. 

(1) Take any $u_{0}(x)\in\mathcal{S}(\alpha)$ and define the sequence $\{u_{n}(x)\}_{n\in\mathbb{Z}}$
such that $u_{n}(x)\in\mathcal{S}(\alpha+n(e_{i}-e_{j}))$ by 
\begin{align*}
u_{n+1} & =H_{n}u_{n}\quad(n\geq0),\quad u_{n-1}=B_{n}u_{n}\quad(n\leq0),
\end{align*}
where
\begin{align*}
H_{n} & =L_{i,j}(\alpha+n(e_{i}-e_{j}))=\left\{ (x_{i}-x_{j})\partial_{j}+\alpha_{j}-n\right\} ,\\
B_{n} & =\frac{1}{(\alpha_{i}+n)(\alpha_{j}-n+1)}L_{j,i}(\alpha+n(e_{i}-e_{j}))\\
 & =\frac{1}{(\alpha_{i}+n)(\alpha_{j}-n+1)}\left\{ (x_{j}-x_{i})\partial_{i}+\alpha_{i}+n\right\} .
\end{align*}
Then $\tau_{n}(x)=B(\alpha_{i},\alpha_{j};n)\cdot(x_{i}-x_{j})^{(\alpha_{i}+n)(\alpha_{j}-n)}u_{n}(x)$
gives a solution of the 2dTHE, where $B(0)=1,B(1)=A$ and
\[
B(\alpha,\beta;n)=\begin{cases}
A^{n}\prod_{k=0}^{n-1}\left(\prod_{l=1}^{k}p(\alpha,\beta;l)\right), & n\geq2,\\
A^{n}\prod_{k=1}^{|n|}\left(\prod_{l=-k+1}^{0}p(\alpha,\beta;l)\right), & n\leq-1.
\end{cases}
\]
with an arbitrary constant $A$.

(2) Let $\Phi(x;\alpha)$ be the Gelfand HGF defined by $\Phi(x;\alpha)=\int_{C}\prod_{k=1}^{N}(u+x_{k})^{\alpha_{k}}du$.
Then 
\[
\tau_{n}(x)=\frac{\Gamma(\alpha_{j}+1)}{\Gamma(\alpha_{j}-n+1)}B(\alpha_{i},\alpha_{j};n)\cdot(x_{i}-x_{j})^{(\alpha_{i}+n)(\alpha_{j}-n)}\Phi(x;\alpha+n(e_{i}-e_{j}))
\]
gives a solution of the 2dTHE (\ref{eq:gel-toda-9}). 
\end{thm}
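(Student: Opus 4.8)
The plan is to assemble the machinery developed above. Throughout, fix the pair $(i,j)$ and treat the remaining variables $x_a$, $a\neq i,j$, as parameters, so that the only equation of the system $\mathcal{M}_n(\alpha)$ carrying the operator $\partial_i\partial_j$ is the single EPD equation $M_{n;i,j}(\alpha)u=0$ in the two variables $(x_i,x_j)$. Under the dictionary $(x,y,\alpha,\beta)\leftrightarrow(x_i,x_j,\alpha_i,\alpha_j)$ this is precisely the $n$-th member of the EPD Laplace sequence of Section~\ref{subsec:EPD}, whose normal form is $M'_{n;i,j}(\alpha)$ and such that $\{M'_{n;i,j}(\alpha)\}_{n\in\mathbb{Z}}$ is a Laplace sequence (Proposition~\ref{prop:Laplace-2}). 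First I would record that this Laplace sequence is the one attached, through Proposition~\ref{prop:Toda-2} and the construction of Section~\ref{subsec:2DTE-laplace}, to the seed $t_n=B(\alpha_i,\alpha_j;n)(x_i-x_j)^{p(\alpha_i,\alpha_j;n)}$ furnished by Proposition~\ref{prop:Laplace-3}; indeed, using $\partial_i\partial_j\log(x_i-x_j)=(x_i-x_j)^{-2}$ one checks that $r_n=\partial_i\partial_j\log t_n$ and $s_{n+1}=\partial_j\log(t_n/t_{n+1})$ are exactly the coefficients of $M'_{n;i,j}(\alpha)$, so the contiguity operators $H_n=\partial_j+s_{n+1}$, $B_n=-r_n^{-1}\partial_i$ of \eqref{eq:darboux-1} coincide with the operators $H'_n,B'_n$ occurring in the diagram~\eqref{eq:gel-toda-5}.

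For part~(1), given $u_0\in\mathcal{S}(\alpha)$ I would form $\{u_n\}_{n\in\mathbb{Z}}$ by $u_{n+1}=H_nu_n$ ($n\geq0$), $u_{n-1}=B_nu_n$ ($n\leq0$); Proposition~\ref{prop:Gel-toda-1} places each $u_n$ in $\mathcal{S}(\alpha+n(e_i-e_j))$, and the identities $B_{n+1}H_n=1$, $H_{n-1}B_n=1$ promote these to $u_{n+1}=H_nu_n$ and $u_{n-1}=B_nu_n$ for every $n\in\mathbb{Z}$. In particular $M_{n;i,j}(\alpha)u_n=0$, so $u'_n:=g_n(x)u_n(x)$ with $g_n=(x_i-x_j)^{-(\alpha_i+n)}$ satisfies $M'_{n;i,j}(\alpha)u'_n=0$, and commutativity of \eqref{eq:gel-toda-5} gives $u'_{n+1}=H'_nu'_n$, $u'_{n-1}=B'_nu'_n$ for all $n$. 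Now I would invoke Proposition~\ref{prop:Backlund-2} with seed $\{t_n\}$ and sequence $\{u'_n\}$: $\tau_n:=t_nu'_n$ solves the 2dTHE in $(x_i,x_j)$. Substituting $t_n$ and $g_n$ and using $p(\alpha_i,\alpha_j;n)-(\alpha_i+n)=(\alpha_i+n)(\alpha_j-n)$ turns this into $\tau_n=B(\alpha_i,\alpha_j;n)(x_i-x_j)^{(\alpha_i+n)(\alpha_j-n)}u_n$, the asserted expression; since $x_a$ with $a\neq i,j$ are absent from $\partial_i\partial_j$, this is a solution of \eqref{eq:gel-toda-9}.

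For part~(2) I would take $u_0=\Phi(x;\alpha)$, which lies in $\mathcal{S}(\alpha)$ by Proposition~\ref{prop:red-EPD}, and prove by induction that
\[
u_n(x)=\frac{\Gamma(\alpha_j+1)}{\Gamma(\alpha_j-n+1)}\,\Phi\bigl(x;\alpha+n(e_i-e_j)\bigr),\qquad n\in\mathbb{Z}.
\]
Going up from $n=0$, the inductive step applies the contiguity relation \eqref{eq:2-red-4} with $(p,q)=(i,j)$ to the parameter $\alpha+n(e_i-e_j)$, giving $L_{i,j}(\alpha+n(e_i-e_j))\Phi(x;\alpha+n(e_i-e_j))=(\alpha_j-n)\Phi(x;\alpha+(n+1)(e_i-e_j))$ together with $(\alpha_j-n)/\Gamma(\alpha_j-n+1)=1/\Gamma(\alpha_j-n)$; going down from $n=0$, the same relation with $(p,q)=(j,i)$ produces a factor $\alpha_i+n$ that cancels against the one in $B_n$, while $1/\bigl((\alpha_j-n+1)\Gamma(\alpha_j-n+1)\bigr)=1/\Gamma(\alpha_j-n+2)$ yields the shifted Gamma quotient. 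Feeding this $u_n$ into the formula of part~(1) gives the stated $\tau_n$.

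The argument is mostly an orchestration of the earlier results, so there is no single deep step; the places that demand care are (i) verifying that conjugation by $g_n$ carries the Gelfand contiguity operators $L_{i,j},L_{j,i}$ precisely to the Darboux operators $H'_n,B'_n$ of the EPD Laplace sequence associated with $\{t_n\}$ — here the genericity hypothesis $\alpha_k\notin\mathbb{Z}$ is used to ensure the invariants never vanish and that the maps of Proposition~\ref{prop:Gel-toda-1} are isomorphisms — and (ii) keeping the exact scalar factors, namely $B(\alpha_i,\alpha_j;n)$ and the $\Gamma$-quotients, correct throughout the iteration so that the final $\tau_n$ is literally the one in the statement.
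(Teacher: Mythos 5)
Your proposal is correct and follows essentially the same route as the paper: conjugate the contiguity operators $L_{i,j},L_{j,i}$ by $g_n=(x_i-x_j)^{-(\alpha_i+n)}$ to obtain the Darboux operators $H_n',B_n'$ of the EPD Laplace sequence, apply Proposition \ref{prop:Backlund-2} with the seed $t_n$ of Proposition \ref{prop:Laplace-3}, and verify the exponent identity $p(\alpha_i,\alpha_j;n)-(\alpha_i+n)=(\alpha_i+n)(\alpha_j-n)$ and the $\Gamma$-quotient recursion from \eqref{eq:2-red-4}. All the scalar bookkeeping checks out.
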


\end{document}